\newtheorem{theorem}{Theorem}[section]
\newtheorem{lemma}[theorem]{Lemma}
\newtheorem{proposition}[theorem]{Proposition}
\newtheorem{remark}[theorem]{Remark}
\def\r{\mathbb{R}}
\def\rn{\mathbb{R}^N}
\def\rh{\rightharpoonup}
\def\irn{\int_{\rn}}
\def\la{\lambda}
\def\la1{\lambda_1}
\def\d12{\mathcal{D}^{1,2}}
\def\cn{\mathcal{N}}
\numberwithin{equation}{section}
\title[Fractional Schr\"{o}dinger equations ]{Ground state solutions of fractional Schr\"{o}dinger equations with potentials and weak monotonicity condition on the nonlinear term}
\author{Chao Ji}\thanks{Chao Ji was supported by NSFC (grant No. 11301181), China Postdoctoral Science Foundation funded project.}
\address{Center for Applied Mathematics, Tianjin University, 300072 Tianjin, China}
\address{Department of Mathematics, East China University of Science and Technology,  200237 Shanghai, China}
\email{jichao@ecust.edu.cn}
\subjclass[2010]{35J60, 35R11, 47J30}
\keywords {Fractional logarithmic Schr\"{o}dinger equation, Periodic potential, coercive potential, bounded potential, nonsmooth critical point theory. }
\begin{document}

\baselineskip15pt

\maketitle

\begin{abstract}
In this paper we are concerned with the fractional Schr\"{o}dinger equation $(-\Delta)^{\alpha} u+V(x)u =f(x, u)$, $x\in \rn$, where $f$ is superlinear, subcritical growth and $u\mapsto\frac{f(x, u)}{\vert u\vert}$ is nondecreasing. When $V$ and $f$ are periodic in $x_{1},\ldots, x_{N}$, we show the existence of ground states and the infinitely many solutions if $f$ is odd in $u$. When $V$ is coercive or $V$ has a bounded potential well and $f(x, u)=f(u)$, the ground states are obtained.
When $V$ and $f$ are asymptotically periodic in $x$,  we also obtain the ground states solutions. In the previous research, $u\mapsto\frac{f(x, u)}{\vert u\vert}$ was assumed to be strictly increasing, due to this small change, we are forced to go beyond methods of smooth analysis.
\end{abstract}

\section{Introduction} \label{intro}

In this paper we consider the following fractional Schr\"{o}dinger equation
\begin{equation}  \label{1}
(-\Delta)^{\alpha} u+V(x)u =f(x, u), \quad  x\in \rn,
\end{equation}
where $\alpha\in (0, 1)$, $N> 2\alpha$,  $(-\Delta)^{\alpha}$ stands for the fractional Laplacian, $f\in C(\rn\times \r, \r)$
and the potential $V\in C(\rn, \r)$.\\
When $\alpha=1$, $(1.1)$ becomes the classical Schr\"{o}dinger equation
\begin{equation}  \label{1}
-\Delta u+V(x)u =f(x, u), \quad  x\in \rn.
\end{equation}
 There has been a great deal of works dealing with the equation (1.2). In particular, Szulkin and Weth \cite{rSW} studied the ground state solutions and the infinitely many solutions if  $f(x, u)$ is odd in $u$ for the strong
indefinite case. In their paper, the nonlinear term $f$ satisfies the following assumption:\\
$(F'_{4})$ $u\mapsto\frac{f(x, u)}{\vert u\vert}$ is strictly increasing on $(-\infty, 0)$ and on $(0, \infty)$.\\
They sought the ground states on the generalized Nehari manifold \cite{rP}
\begin{equation*}
\mathcal{M}:=\{u\in E\backslash E^{-}: \Phi'(u)u=0\,\, \text{and}\,\, \Phi'(u)v=0\,\,\text{for all}\,\, v\in E^{-}\},
\end{equation*}
where $H^{1}(\rn):=E=E^{+}\oplus E^{-}$ corresponds to the spectral decomposition of $-\bigtriangleup+V$ with respect to the positive
and negative part of the spectrum and
\begin{equation*}
\Phi(u)=\frac{1}{2}\irn (\vert \nabla u\vert^{2}+V(x)u^{2})dx-\irn F(x, u)dx,
\end{equation*}
$F(x, u)=\int_{0}^{u}f(x, s)ds$.  Because of the assumption $(F'_{4})$,
for any $u\in E\backslash E^{-}$, the set $\mathcal{M}$ intersects $\hat{E}(u):=E^{-}\oplus \mathbb{R}^{+}u=E^{-}\oplus \mathbb{R}^{+}u^{+}$ in exactly
one point $\hat{m}(u)$ which is the unique global maximum point of $\Phi\mid_{\hat{E}(u)}$, the uniqueness of $\hat{m}(u)$ enables one to define a continuous map $u\mapsto \hat{m}(u)$, which is important in the remaining proof.
 If $(F'_{4})$ is replaced by the weaker condition as follows\\
$(F_{4})$ $u\mapsto\frac{f(x, u)}{\vert u\vert}$ is nondecreasing on $(-\infty, 0)$ and on $(0, \infty)$,\\
then $\mathcal{M}\cap\hat{E}(u)$ may be a finite line segment that an example can be seen in \cite{rZZ}, so the argument in \cite{rSW}  collapses. To solve this problem, in \cite{rLi},  by applying linking methods and showing the boundedness of all Cerami sequences for $\Phi$,  Liu obtained the ground states. After that,  via a non-smooth method,  Pavia, Kryszewski and  Szulkin in \cite{rPKS}  gave the ground state solutions and the infinitely many solutions if $f(x, u)$ is odd in $u$, and the result in \cite{rLi}  is an easy consequence of the approach in \cite{rPKS}.  Motivated by \cite{rPKS}, in this paper we will generalize their results to the fractional Schr\"{o}dinger equations when $V$ and $f$ are 1-periodic in $x_{1}$, $\ldots$, $x_{N}$. Since our problem is nonlocal, it is the more difficult and complicated. Moreover, for the coercive potential case, the bounded potential well case, the $V$ and $f$ are asymptotically periodic in $x$ case, we also give the existence of ground states of problem (1.2) via the variational methods \cite{rW}.\\

In recent years, the study of the various nonlinear equations or systems involving fractional Laplacian has received considerable attention. These problems mainly arise in fractional quantum mechanics \cite{rL1, rL2}, physics and chemistry \cite{rMK}, obstacle problems \cite{rSi}, optimization and finance \cite{rCT} and so on. In the remarkable work of Caffarelli and Silvestre \cite{rCS}, the authors express this nonlocal operator $(-\Delta)^{s}$ as a Dirichlet-Neumann map for a certain elliptic boundary value problem with local differential operators defined on the upper half space. This technique is a valid tool to deal with the equations involving fractional operators in the respects of regularity and variational methods. For more results on the fractional differential equations, we refer to \cite{rAP, rCW, rMR, rMRS, rS1, rS2}. Recently, in \cite{rZXZ}, under the assumption $(F'_{4})$ and using Andrzej and Weth's method \cite{rSW}, the authors show the existence of infinitely many solutions of problem (1.2) when $V$ and $f$ are periodic in $x_{1},\ldots, x_{N}$, $f(x, u)$ is odd in $u$. Moveover, when $V$ and $f$ are asymptotically periodic in $x$, they give the ground state solutions. If $(F'_{4})$ is replaced by $(F_{4})$, the argument in \cite{rZXZ} does not work, we will deal with this problem and improve their results.\\

From now on,  we always assume that $\inf_{x\in \rn}V(x)>0$. Besides of the assumption $(F_{4})$, $f$ also satisfies the following assumptions:\\ $(F_{1})$ $\vert f(x, u)\vert\leq C(1+\vert u\vert^{p-1})$ for some $C>0$ and $2<p<2_{\alpha}^{*}=\frac{2N}{N-2\alpha}$.\\
$(F_{2})$ $f(x, u)=o(u)$ uniformly in $x$ as $u\rightarrow 0$.\\
$(F_{3})$ $\frac{F(x, u)}{u^{2}}\rightarrow\infty$ uniformly in $x$ as $\vert u\vert\rightarrow \infty$, where $F(x, u)=\int_{0}^{u}f(x, s)ds$.\\

Now let us state the main results of this paper.\\
\begin{theorem} \label{thm1}
Assume that $(F_{1})-(F_{4})$ hold, $V$ and $f$ is 1-periodic in $x_{1},\ldots, x_{N}$, then problem (1.1) has a ground state solution.
\end{theorem}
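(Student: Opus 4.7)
\emph{Variational setup.} The natural space is $E := H^{\alpha}(\rn)$ equipped with the inner product
\[
\langle u, v\rangle := \irn (-\Delta)^{\alpha/2}u\,(-\Delta)^{\alpha/2}v\,dx + \irn V(x)uv\,dx,
\]
whose norm is equivalent to the standard one because $V$ is continuous, $1$-periodic and $\inf V > 0$. Solutions of (1.1) are critical points of
\[
\Phi(u) := \tfrac{1}{2}\|u\|^{2} - \irn F(x, u)\,dx.
\]
Since $(-\Delta)^{\alpha}+V$ is positive definite, no indefinite splitting is needed; $(F_{1})$--$(F_{3})$ give the usual mountain pass geometry, and the minimax level $c$ agrees with $\inf_{\mathcal{N}}\Phi$, where $\mathcal{N} = \{u\neq 0 : \Phi'(u)u = 0\}$.

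\emph{Cerami sequence and boundedness.} The Cerami variant of the mountain pass theorem yields $(u_{n})\subset E$ with $\Phi(u_{n})\to c>0$ and $(1+\|u_{n}\|)\|\Phi'(u_{n})\|_{E^{*}}\to 0$. The first main step is to prove $(u_{n})$ is bounded. Suppose $\|u_{n}\|\to\infty$ and set $v_{n}:= u_{n}/\|u_{n}\|$, so up to a subsequence $v_{n}\rh v$ in $E$ and a.e. If $v\not\equiv 0$, then $|u_{n}|\to\infty$ on $\{v\neq 0\}$, so $(F_{3})$ together with Fatou's lemma forces $\Phi(u_{n})/\|u_{n}\|^{2}\to-\infty$, contradicting $\Phi(u_{n})\to c$. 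If $v\equiv 0$ and $(v_{n})$ vanishes in the sense of Lions, the fractional Lions lemma gives $v_{n}\to 0$ in $L^{q}(\rn)$ for $q\in (2, 2_{\alpha}^{*})$, and by $(F_{1})$--$(F_{2})$, $\irn F(x, sv_{n})\,dx\to 0$ for every fixed $s>0$, hence $\Phi(sv_{n})\to s^{2}/2$. On the other hand, under $(F_{4})$ a direct computation of $h(t):=\Phi(u) - \Phi(tu) - \tfrac{1-t^{2}}{2}\Phi'(u)u$ via $h'(t)=\int u\bigl(f(x, tu)-tf(x, u)\bigr)dx$ and the monotonicity of $s\mapsto f(x, s)/|s|$ shows the Szulkin-Weth-type inequality
\[
\Phi(tu)\leq \Phi(u) + \tfrac{t^{2}-1}{2}\Phi'(u)u,\qquad u\in E,\ t\geq 0;
\]
applied with $u = u_{n}$ and $t = s/\|u_{n}\|\to 0$, the Cerami condition yields $\Phi(sv_{n})\leq \Phi(u_{n}) + o(1) = c + o(1)$. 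Letting $n\to\infty$ then $s\to\infty$ gives $s^{2}/2\leq c$ for all $s$, absurd. The remaining non-vanishing sub-case with $v\equiv 0$ reduces to the previous case by translating along suitable $y_{n}\in\rn$ as in the next step.

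\emph{Nonzero weak limit via periodicity.} With $(u_{n})$ bounded and $c>0$, $(F_{1})$--$(F_{2})$ and the Cerami condition rule out $u_{n}\to 0$ in $L^{q}(\rn)$, so Lions' lemma gives $R, \delta>0$ and $y_{n}\in\rn$ with $\int_{B_{R}(y_{n})}u_{n}^{2}\,dx\geq\delta$. By $1$-periodicity of $V$ and $f$ we may take $y_{n}\in\mathbb{Z}^{N}$, and the translates $\wt{u}_{n}(x) := u_{n}(x + y_{n})$ satisfy $\Phi(\wt{u}_{n})\to c$, $\Phi'(\wt{u}_{n})\to 0$, $\|\wt{u}_{n}\|=\|u_{n}\|$, and admit a weak limit $u\neq 0$. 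The local compactness $H^{\alpha}(\rn)\hookrightarrow L^{q}_{\mathrm{loc}}(\rn)$ for $q\in[2, 2_{\alpha}^{*})$ allows one to pass to the limit in $\Phi'(\wt{u}_{n})\vp\to 0$ for every $\vp\in C_{c}^{\infty}(\rn)$, yielding $\Phi'(u) = 0$. Hence $u\in\mathcal{N}$ and $\Phi(u)\geq c$.

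\emph{Ground state identification and main obstacle.} The reverse inequality uses $(F_{4})$ once more: the integral representation $F(x,s)/s^{2}=\int_{0}^{1}\tfrac{f(x,\tau s)}{\tau s}\tau\,d\tau$ combined with monotonicity of $f(x,\cdot)/|\cdot|$ gives the pointwise estimate $\tfrac{1}{2}f(x, s)s - F(x, s)\geq 0$ for all $(x, s)\in\rn\times\r$. Since $\Phi'(u) = 0$, Fatou's lemma applied to these nonnegative integrands yields
\[
\Phi(u) = \irn\Bigl(\tfrac{1}{2}f(x, u)u - F(x, u)\Bigr)dx \leq \liminf_{n\to\infty}\Bigl(\Phi(\wt{u}_{n}) - \tfrac{1}{2}\Phi'(\wt{u}_{n})\wt{u}_{n}\Bigr) = c,
\]
so $\Phi(u) = c$ and $u$ is a ground state of (1.1). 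The principal obstacle is the Cerami boundedness in Step~2: under the strict monotonicity $(F_{4}')$ of \cite{rSW, rZXZ} one reduces to the Nehari manifold via a continuous single-valued projection $\hat m$, but under only $(F_{4})$ that projection may be set-valued and the classical Nehari approach collapses; the extra factor $1 + \|u_{n}\|$ in the Cerami condition, together with the Szulkin-Weth inequality that remains valid under $(F_{4})$, is what compensates, as in \cite{rLi, rPKS}. Transferring this argument to the nonlocal setting further requires the fractional analogues of the Lions concentration-compactness lemma and of the local compact embedding for $H^{\alpha}(\rn)$.
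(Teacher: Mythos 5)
Your proof is correct, but it takes a genuinely different route from the paper's. You invoke the mountain-pass (Cerami) theorem on the $C^1$ functional $\Phi$ directly and establish boundedness of the Cerami sequence via the pointwise inequality $\Phi(tu)\le\Phi(u)+\frac{t^2-1}{2}\Phi'(u)u$, which you rightly observe survives under the weak monotonicity condition $(F_{4})$; this is essentially the linking/Cerami argument of Liu \cite{rLi}, which the introduction of the paper explicitly mentions as an alternative treatment. The paper instead develops the nonsmooth Nehari-manifold framework of Pavia--Kryszewski--Szulkin \cite{rPKS}: since under $(F_{4})$ the ray-to-$\mathcal{N}$ projection $m$ can be set-valued (Lemma 3.1(iii)), the reduced functional $\widehat\Psi(u)=\max_{t\ge 0}J(tu)$ is only locally Lipschitz (Proposition 3.3), so the paper applies Ekeland's variational principle to $\Psi=\widehat\Psi|_{S}$ on the unit sphere and uses the generalized-directional-derivative estimate of Proposition 3.5 to transfer the almost-critical information to a sequence $\omega_{n}\in m(u_{n})\subset\mathcal{N}$, whose boundedness is then automatic because $J$ is coercive on $\mathcal{N}$ (Lemma 3.1(vi)). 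Both arguments conclude identically through periodicity, Lions' lemma and Fatou. The payoff of the paper's heavier machinery is that the same nonsmooth functional $\Psi$ on $S$, its minimal-norm selection $\partial^{-}\Psi$, the pseudo-gradient flow $\eta$, and the genus argument are precisely what drive the multiplicity result Theorem 1.2; your lighter route suffices for the single ground state of Theorem 1.1 and avoids nonsmooth analysis altogether. One small point worth making explicit in your write-up is the identification $c_{\mathrm{MP}}=\inf_{\mathcal{N}}\Phi$ at the start: the inequality $c_{\mathrm{MP}}\ge\inf_{\mathcal{N}}\Phi$ requires that every admissible path cross $\mathcal{N}$, which follows by an intermediate-value argument on $t\mapsto\Phi'(\gamma(t))\gamma(t)$ together with the sign condition (3.2), but it does not come for free from the mountain-pass geometry alone.
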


Let $*$ denote the action of  $\mathbb{Z}^{N}$ on $H^{\alpha}(\rn)$ given by
\begin{equation}
\big(k*u\big)(x):=u(x-k),\quad\quad k\in \mathbb{Z}^{N}.
\end{equation}
If $V$ and $f$ is 1-periodic in $x_{1},\ldots, x_{N}$ and $u_{0}$ is a solution of problem $(1.1)$, then so is $k*u_{0}$ for all $k\in \mathbb{Z}^{N}$. Set
\begin{equation*}
{\mathcal O (u_{0}) } :=\{k*u_{0}: k\in\mathbb{Z}^{N}\}.
\end{equation*}
Two solutions $u_{1}$ and $u_{2}$ are said to be geometrically distinct if $\mathcal O (u_{1})$, $\mathcal O (u_{2})$ are disjoint and $u_{2}\neq [s_{u_{1}}, t_{u_{1}}]u_{1}$.

\begin{theorem} \label{thm1}
Under the assumptions of Theorem 1.1 and $f(x, u)$ is odd in $u$, there are the infi\-nitely many geometrically distinct solutions for problem (1.1).
\end{theorem}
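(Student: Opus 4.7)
The plan is to build on the non-smooth variational framework underlying Theorem 1.1 (in the spirit of Pavia--Kryszewski--Szulkin) and combine it with the $\mathbb{Z}_2$-equivariance coming from the oddness of $f(x,\cdot)$ together with the $\mathbb{Z}^N$-equivariance coming from the $1$-periodicity of $V$ and $f$. Since $\inf_{x\in\rn}V(x)>0$, the associated functional
\begin{equation*}
\Phi(u)=\tfrac{1}{2}\irn\bigl(|(-\Delta)^{\alpha/2}u|^{2}+V(x)u^{2}\bigr)dx-\irn F(x,u)\,dx
\end{equation*}
is positive definite on $H^{\alpha}(\rn)$, so the generalized Nehari set $\mathcal{N}:=\{u\in H^{\alpha}(\rn)\setminus\{0\}:\Phi'(u)u=0\}$ is symmetric and $\mathbb{Z}^N$-invariant, and $\Phi|_{\mathcal{N}}$ is simultaneously even and translation invariant, with the ground-state level $c$ realized there by Theorem 1.1.

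Next I would build an equivariant sequence of min-max values
\begin{equation*}
c_k:=\inf_{A\in\Gamma_k}\sup_{u\in A}\Phi(u),
\end{equation*}
where $\Gamma_k$ is the family of compact, symmetric subsets of $\mathcal{N}$ of Krasnoselskii genus at least $k$. Using $(F_3)$ and the fact that $\Phi$ tends to $-\infty$ along finite-dimensional linear subspaces intersecting $\mathcal{N}$, one checks $c_1=c$ and $c_k\to\infty$. An odd, non-smooth deformation lemma on $\mathcal{N}$---replacing the multi-valued Nehari projection $\hat m$, whose image on each half-ray is now the segment $[s_u,t_u]u$ under $(F_4)$, by an equivariant locally Lipschitz pseudo-gradient vector field---then promotes each $c_k$ to a critical value and furnishes a Cerami sequence at that level.

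To convert the $c_k$ into infinitely many geometrically distinct solutions, I would argue by contradiction and pass to the orbit space $\mathcal{N}/\mathbb{Z}^N$. If only finitely many orbits $\mathcal{O}(w_1),\ldots,\mathcal{O}(w_m)$ of nontrivial solutions existed, then $\Phi$ would take only the values $\Phi(w_1),\ldots,\Phi(w_m)$ on its critical set, and a fractional Brezis--Lieb/splitting analysis applied to the Cerami sequences together with the Lions vanishing lemma in $H^{\alpha}(\rn)$ would force each $c_k$ into the countable discrete set $\{\sum_{j} n_j\Phi(w_j):n_j\in\mathbb{N}\cup\{0\}\}$; a genus-drop argument would then cap the genus of the relevant sub-level sets, contradicting $c_k\to\infty$. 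The definition of geometric distinctness from the excerpt ensures that distinct orbits modulo the Nehari segment $[s_u,t_u]u$ are counted correctly.

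The principal obstacle is the genuine non-smoothness created by $(F_4)$: with $\hat m$ multi-valued, the classical Szulkin--Weth reduction of the problem to a smooth, even min-max on the unit sphere $S^{+}$ collapses, and one must construct directly on $\mathcal{N}$ an odd Lipschitz descent field that respects both $\mathbb{Z}_2$ and $\mathbb{Z}^N$ simultaneously. Superimposed on this is the nonlocality of $(-\Delta)^{\alpha}$: the vanishing lemma of Lions and the Brezis--Lieb-type splitting identities must be verified in $H^{\alpha}(\rn)$, and compactness of Cerami sequences must be recovered through the fractional Sobolev embedding together with translations in $\mathbb{Z}^N$. Reconciling these two layers inside a single equivariant deformation scheme is where the essential technical work of the proof lies.
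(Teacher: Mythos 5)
Your outline shares the broad philosophy with the paper's proof (non-smooth Krasnoselskii genus scheme, odd pseudo-gradient field, $\mathbb{Z}^N$-invariance, Lions vanishing in $H^{\alpha}(\rn)$, and the caveat about counting orbits modulo segments $[s_u,t_u]u$), but the mechanism by which you extract a contradiction from the assumption of finitely many orbits is genuinely different from, and less complete than, the one in the paper.

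The paper reduces to the unit sphere $S$ via the multi-valued Nehari map $m$ and studies $\Psi=\widehat{\Psi}|_S$; you propose to work on $\mathcal{N}$ directly. That is more than a cosmetic choice: under $(F_4)$, $\mathcal{N}\cap\mathbb{R}^{+}u$ may be a whole segment, so $\mathcal{N}$ is not a manifold and does not cleanly support a genus/flow argument, whereas $S$ does. The paper's min-max numbers are $c_k:=\inf\{d:\gamma(\Psi^d)\ge k\}$ (the genus-change levels on $S$), and the contradiction is obtained \emph{without} ever needing $c_k\to\infty$: the key discreteness lemma for pairs of PS-sequences (Lemma 3.11, of Szulkin--Weth type), together with the quantitative deformation Lemma 3.13 and the flow-convergence Lemma 3.12, yield simultaneously $K_{c_k}\neq\emptyset$ and the \emph{strict} inequality $c_k<c_{k+1}$ for every $k$. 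Since a finite collection of orbits can carry only finitely many distinct critical values of $\Psi$, having infinitely many strictly increasing critical levels is already the contradiction.

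Your replacement — a fractional Brezis--Lieb/splitting analysis forcing $c_k$ into $\{\sum_j n_j\Phi(w_j)\}$, followed by a genus-drop argument against $c_k\to\infty$ — has two concrete gaps. First, you assert $c_k\to\infty$ but give no argument; on $\rn$ with translation invariance the sub-level sets $\Psi^d$ are $\mathbb{Z}^N$-invariant and non-compact, so the standard finite-dimensional linking computation of $c_k\to\infty$ does not apply directly, and the paper deliberately sidesteps this issue. Second, even granting the splitting, having $c_k$ land in a countable discrete set does not by itself contradict $c_k\to\infty$; you would still need to show the set $\{\sum_j n_j\Phi(w_j)\}$ is bounded on the range of the $c_k$, or establish a uniform genus bound, neither of which is made precise. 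You should also note that you conflate the Nehari projection $m$ and the pseudo-gradient field $H$: replacing $\hat m$ by a Lipschitz field is not the right picture. In the paper $m$ survives as a multi-valued projection (producing the locally Lipschitz $\widehat{\Psi}$), while $H$ is a separate odd descent field built from the regularized quantity $\mu(u)=\inf_{\beta}\{\Vert\partial^{-}\Psi(\beta)\Vert+\Vert u-\beta\Vert\}$, and both ingredients are needed.
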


\begin{theorem} \label{thm2}
Assume that $(F_{1})-(F_{4})$ and $\lim_{\vert x\vert\rightarrow \infty}V(x)=+\infty$ hold, then problem (1.1) has a ground state solution.
\end{theorem}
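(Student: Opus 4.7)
The plan is to work in the Hilbert space
$$
E := \Bigl\{u \in H^{\alpha}(\mathbb{R}^N) : \int_{\mathbb{R}^N} V(x)\,u^{2}\,dx < \infty\Bigr\}
$$
equipped with the inner product $\langle u,v\rangle := \int_{\mathbb{R}^N}[(-\Delta)^{\alpha/2}u\,(-\Delta)^{\alpha/2}v + V(x)uv]\,dx$. Because $\inf V>0$, the induced norm is equivalent to the natural norm on $H^{\alpha}(\mathbb{R}^N) \cap L^{2}(V\,dx)$, and the coercivity $V(x)\to+\infty$ yields, via a standard tightness argument of Bartsch--Wang type, the compact embedding $E\hookrightarrow L^{q}(\mathbb{R}^N)$ for every $q\in[2,2_{\alpha}^{*})$. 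Since $\|\cdot\|^{2}$ is positive definite, the Nehari set reduces to $\mathcal{N} := \{u \in E\setminus\{0\} : \Phi'(u)u=0\}$, where $\Phi(u) := \tfrac12\|u\|^{2} - \int_{\mathbb{R}^N} F(x,u)\,dx$ is of class $C^{1}$.

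Next I would verify the mountain-pass geometry for $\Phi$: $(F_{1})$ and $(F_{2})$ give $\Phi(u)\ge c_{0}\|u\|^{2}$ on a small sphere of $E$, while $(F_{3})$ forces $\Phi(tu)\to-\infty$ as $t\to+\infty$ for every $u\ne 0$. Under $(F_{4})$ the map
$$
t \mapsto \frac{\Phi'(tu)u}{t} = \|u\|^{2} - \int_{\mathbb{R}^N} \frac{f(x,tu)}{tu}\,u^{2}\,dx
$$
is nonincreasing on $(0,\infty)$, so $\Phi|_{\mathbb{R}^{+}u}$ attains its positive maximum on a (possibly degenerate) interval $[s_{u},t_{u}]u\subset\mathcal{N}$, and a standard comparison identifies the mountain-pass level with $c := \inf_{\mathcal{N}}\Phi > 0$. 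The mountain-pass theorem then delivers a Cerami sequence $(u_{n})\subset E$ with $\Phi(u_{n})\to c$ and $(1+\|u_{n}\|)\|\Phi'(u_{n})\|_{E^{\ast}}\to 0$.

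The crucial step is boundedness of $(u_{n})$. Arguing by contradiction, assume $\|u_{n}\|\to\infty$ and put $w_{n} := u_{n}/\|u_{n}\|$; up to a subsequence $w_{n}\rightharpoonup w$ in $E$ and $w_{n}\to w$ in $L^{q}(\mathbb{R}^N)$ for every $q\in[2,2_{\alpha}^{*})$ by the compact embedding. If $w\ne 0$, then $|u_{n}(x)|\to+\infty$ a.e.\ on $\{w\ne 0\}$ and $(F_{3})$ together with Fatou's lemma forces $\int F(x,u_{n})/\|u_{n}\|^{2}\,dx\to+\infty$, contradicting $\Phi(u_{n})/\|u_{n}\|^{2}\to 0$. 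If $w=0$, I would adapt the nonsmooth trick from \cite{rPKS}: for any $R>\sqrt{2c}$ and $n$ large choose $t_{n}\in(0,1)$ with $t_{n}\|u_{n}\| = R$; the $L^{q}$-convergence $w_{n}\to 0$ makes $\int F(x,t_{n}u_{n})\,dx\to 0$, hence $\Phi(t_{n}u_{n})\to R^{2}/2$, while $(F_{4})$ combined with the Cerami condition gives $\Phi(t_{n}u_{n})\le\Phi(u_{n})+o(1) = c+o(1)$, which contradicts $R^{2}/2 > c$. Once $(u_{n})$ is bounded, the compact embedding passes to the limit in the nonlinear term, so the weak limit $u$ is a critical point of $\Phi$; applying Fatou to $\Phi(u_{n})-\tfrac12\Phi'(u_{n})u_{n}$ under $(F_{4})$ yields $\Phi(u)\le c$, and since every nontrivial critical point lies on $\mathcal{N}$ this forces $u\ne 0$ and $\Phi(u)=c$, so $u$ is the desired ground state.

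The principal obstacle is the $w=0$ alternative above. Under the strict monotonicity $(F'_{4})$ one has $\Phi(tu)\le\Phi(u)$ for all $t\in[0,1]$ as an immediate consequence of uniqueness of the Nehari maximum on the ray, but under only $(F_{4})$ this comparison has to be recovered asymptotically from the nonsmooth machinery of \cite{rPKS}. Everything else is comparatively simple, because the coercivity of $V$ kills all loss of compactness through the compact embedding $E\hookrightarrow L^{q}(\mathbb{R}^N)$, which is why Theorem 1.3 is genuinely easier than Theorems 1.1--1.2.
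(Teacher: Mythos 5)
Your proposal is correct in essence but takes a genuinely different route from the paper's. The paper proves Theorem~1.3 by reusing the nonsmooth Nehari machinery already set up for Theorem~1.1: it applies Ekeland's principle to the locally Lipschitz functional $\Psi=\widehat{\Psi}|_S$, producing a PS-sequence $\{\omega_n\}\subset m(u_n)\subset\mathcal N$; this sequence is automatically bounded because (Lemma~4.1) the coercive embedding makes $J$ coercive on $\mathcal N$; then the compact embedding $E\hookrightarrow L^q$ together with Lemma~3.1(ii) gives $u\neq 0$. You instead discard the nonsmooth Nehari manifold entirely and run the classical mountain-pass theorem with a Cerami sequence at the level $c=\inf_{\mathcal N}J$, showing boundedness of the Cerami sequence directly by the Jeanjean-type dichotomy on $w_n=u_n/\|u_n\|$. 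This is in fact the route of Liu~\cite{rLi} that the introduction mentions, and for this particular theorem it is arguably cleaner and more self-contained, since all loss of compactness is already handled by the compact embedding.

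Two caveats. First, you attribute the $w=0$ step to ``the nonsmooth trick from \cite{rPKS}'' and suggest the comparison $\Phi(t_nu_n)\le\Phi(u_n)+o(1)$ ``has to be recovered asymptotically from the nonsmooth machinery.'' That is not so: under the non-strict condition $(F_4)$ alone, a one-variable convexity computation (minimise $t\mapsto F(x,tu)-\tfrac{t^2}{2}f(x,u)u$ at $t=1$) gives the pointwise inequality
\begin{equation*}
\Phi(tu)\le \Phi(u)+\frac{t^2-1}{2}\,\langle\Phi'(u),u\rangle\qquad\text{for all }t\ge 0,
\end{equation*}
and combined with the Cerami smallness $\langle\Phi'(u_n),u_n\rangle\to 0$ this yields your estimate with no nonsmooth analysis whatsoever. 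Second, the concluding clause ``since every nontrivial critical point lies on $\mathcal N$ this forces $u\neq 0$'' is circular as written; the nontriviality argument must be made separately: if $u=0$, then the compact embedding gives $u_n\to 0$ in $L^q$, whence by $(3.1)$ $\irn f(x,u_n)u_n\,dx\to 0$, and together with $\langle\Phi'(u_n),u_n\rangle\to 0$ this forces $\|u_n\|\to 0$, contradicting $\Phi(u_n)\to c>0$. With those two points fixed, your argument is complete and valid.
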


\begin{theorem} \label{thm3}
Assume that $f(x, u)=f(u)$ and $(F_{1})-(F_{4})$ hold, $\inf_{x\in \rn}V(x)\leq V(x)<\lim_{\vert x\vert\rightarrow \infty}V(x)=\sup_{x\in \rn}V(x)<+\infty$,  then problem (1.1) has a ground state solution.
\end{theorem}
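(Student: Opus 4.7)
The plan is to exploit the ``ground state level strictly below the level at infinity'' dichotomy that is standard for bounded-potential-well problems, adapted to the non-smooth Nehari setting of this paper. Let $V_\infty := \lim_{|x|\to\infty} V(x) = \sup_{x\in\mathbb{R}^N} V(x)$ and consider, along with the energy functional
\[
\Phi(u) = \tfrac{1}{2}\irn \bigl(|(-\Delta)^{\alpha/2} u|^2 + V(x) u^2\bigr)\,dx - \irn F(u)\,dx,
\]
defined on $H^\alpha(\rn)$, the autonomous ``problem at infinity''
\[
\Phi_\infty(u) = \tfrac{1}{2}\irn \bigl(|(-\Delta)^{\alpha/2} u|^2 + V_\infty u^2\bigr)\,dx - \irn F(u)\,dx.
\]
Since $\inf V > 0$, both functionals are well defined on $H^\alpha(\rn)$ with an equivalent norm, and under $(F_1)$--$(F_4)$ they have mountain-pass geometry. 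As in the periodic case (Theorem 1.1), I will work with the generalized Nehari set and the non-smooth framework inherited from \cite{rPKS}, producing ground-state levels $c = \inf_{\mathcal{M}} \Phi$ and $c_\infty = \inf_{\mathcal{M}_\infty} \Phi_\infty$. Because $\Phi_\infty$ is autonomous (and the decomposition is trivial since $\inf V > 0$), the periodic case (or a direct translation argument of Lions type) yields a ground state $w_\infty\in\mathcal{M}_\infty$ with $\Phi_\infty(w_\infty)=c_\infty$.

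The first key step is the strict inequality $c < c_\infty$. Using the continuous parametrization $t\mapsto t w_\infty$ and the fact, granted by $(F_4)$ via the non-smooth Nehari analysis, that the maximum of $\Phi$ on the ray $\{t w_\infty : t > 0\}$ is attained on a closed interval $[s_{w_\infty}, t_{w_\infty}] w_\infty \subset \mathcal{M}$, I pick any $u_0 = t_0 w_\infty$ in this maximum set. Then
\[
c \le \Phi(u_0) = \Phi_\infty(u_0) + \tfrac{1}{2}\irn (V(x)-V_\infty)\, u_0^2\,dx \le \Phi_\infty(u_0) \le \Phi_\infty(w_\infty) = c_\infty,
\]
and since $V(x) < V_\infty$ on a set of positive measure and $u_0 \not\equiv 0$, the middle inequality is strict, giving $c < c_\infty$.

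Next, I take a Cerami sequence $(u_n) \subset H^\alpha(\rn)$ with $\Phi(u_n) \to c$ and $(1+\|u_n\|)\Phi'(u_n) \to 0$ (constructed as in the treatment of Theorem 1.1; the set $\mathcal{M}$ serves as a natural constraint, and an Ekeland-type argument on it produces such a sequence even under $(F_4)$). Standard estimates using $(F_1)$--$(F_3)$ give boundedness, so up to a subsequence $u_n \rh u$ weakly in $H^\alpha(\rn)$, $u_n \to u$ in $L^q_{\mathrm{loc}}$ for $q \in [2, 2^*_\alpha)$, and $u_n \to u$ a.e. The limit $u$ is a critical point of $\Phi$. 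The task is to show $u\neq 0$, for then $u \in \mathcal{M}$ and a Fatou/Brezis--Lieb computation combined with the variational characterization in \cite{rPKS} forces $\Phi(u)=c$, i.e., $u$ is the sought ground state.

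The main obstacle, and the place where $V(x) < V_\infty$ at infinity is decisive, is ruling out $u = 0$. Suppose $u = 0$. A Lions vanishing/non-vanishing dichotomy applied to $|u_n|^2$ shows that vanishing is incompatible with $\Phi(u_n)\to c > 0$ (using $(F_1)$--$(F_2)$), so there exist $R>0$, $\delta>0$, and $y_n \in \rn$ with $\int_{B_R(y_n)} u_n^2 \ge \delta$. If $(y_n)$ were bounded, local compactness would contradict $u=0$; hence $|y_n| \to \infty$. Set $v_n(x) := u_n(x+y_n)$; boundedness and $|y_n|\to\infty$ combined with $V(x+y_n) \to V_\infty$ locally uniformly (passing to a subsequence, possibly after extracting a diagonal) imply that $v_n$ is a Cerami sequence for $\Phi_\infty$ at level $c$, with nontrivial weak limit $v\not\equiv 0$. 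Standard arguments on the autonomous problem then give $\Phi_\infty(v) \le c$ with $v \in \mathcal{M}_\infty$, hence $c_\infty \le \Phi_\infty(v) \le c$, contradicting $c < c_\infty$. This concentration-compactness step, made slightly delicate by the non-local character of $(-\Delta)^\alpha$ and by the interval-valued Nehari map under $(F_4)$, is the crux of the argument; everything else is bookkeeping within the framework already set up for Theorem 1.1.
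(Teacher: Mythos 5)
Your proposal is correct and follows essentially the same strategy as the paper: establish $c<c_\infty$ by comparing $J$ and $J_\infty$ along a ray through the ground state $u_\infty$ of the autonomous limit problem, obtain a bounded PS sequence via the non-smooth Nehari framework, and rule out vanishing by translating to convert the sequence into a PS sequence for $J_\infty$, reaching the contradiction $c_\infty\le c$. The only cosmetic difference is that you detour through the dichotomy ``$y_n$ bounded or $|y_n|\to\infty$,'' whereas the paper observes more directly that if $u=0$ then $u_n\to 0$ in $L^2_{\mathrm{loc}}$ already makes $\{u_n\}$ a PS sequence for $J_\infty$ (using $V_\infty-V\in L^\infty$ and $V\to V_\infty$ at infinity), so one can translate by any concentration centers without caring whether they escape to infinity.
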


Let $\mathcal{F}$ be the class of functions $h\in L^{\infty}(\rn)$ such that for every $\epsilon>0$, the set $\{x\in \rn: \vert h(x)\vert\geq\epsilon\}$
has finite Lebesgue measure.

\begin{theorem} \label{thm4}
Besides of $(F_{1})-(F_{4})$, $V$ and $f$ also satisfies the following assumptions:\\
\textbf{$(V_{1})$} There exists a functions $V_{p}\in C(\rn, \r)$, 1-periodic in $x_{1}$, $\ldots$, $x_{N}$, such that $(V-V_{p})\in \mathcal{F}$, and $V(x)\leq V_{p}(x)$ for all
$x\in \rn$.\\
$(F_{5})$ There exists a function $f_{p}\in C(\rn\times\r, \r)$, 1-periodic in $x_{1},\ldots, x_{N}$, such that\\
$(i)$ $\vert f(x, u)\vert\geq \vert f_{p}(x, u)\vert$, \, $(x, u)\in (\rn, \r)$;\\
$(ii)$ $\vert f(x, u)-f_{p}(x, u)\vert \leq \vert h(x)\vert(\vert u\vert+\vert u\vert^{p-1})$, $(x, u)\in (\rn, \r)$, $h\in \mathcal{F}$ and $p\in [2, 2_{\alpha}^{*})$;\\
$(iii)$ $u\mapsto\frac{f_{p}(x, u)}{\vert u\vert}$ is nondecreasing on $(-\infty, 0)$ and on $(0, \infty)$.\\
Then problem (1.1) has a ground state solution.
\end{theorem}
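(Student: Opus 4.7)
The plan is to apply the nonsmooth Nehari-type framework of \cite{rPKS}, suitably adapted to the fractional setting, together with a strict level comparison with the periodic problem governed by $(V_p, f_p)$, in order to recover compactness at the ground state level. Let $E = H^{\alpha}(\rn)$ and denote by $\Phi$ and $\Phi_p$ the energy functionals associated with $(V, f)$ and $(V_p, f_p)$ respectively; under $(V_{1})$, $\inf V > 0$ and $V \le V_p$, both induce equivalent Hilbert norms on $E$. From $(F_{2})$ and $(F_{4})$ one checks that $f(x,u)/u \ge 0$, so $|f(x, u)| = \mathrm{sgn}(u) f(x, u)$, and likewise for $f_p$; integrating $(F_{5})(i)$ yields $F(x, u) \ge F_p(x, u)$ pointwise. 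Combined with $V \le V_p$, this gives $\Phi(u) \le \Phi_p(u)$ for every $u \in E$, with strict inequality whenever the data do not identically coincide.

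Let $c_p > 0$ denote the ground state level for $\Phi_p$ furnished by Theorem 1.1 applied to $(V_p, f_p)$, attained at some $u_p$. I would define the candidate level $c$ for $\Phi$ through the nonsmooth mountain pass / Nehari characterization of \cite{rPKS} (which remains well-defined under the weak monotonicity $(F_{4})$), and establish the strict gap $c < c_p$ by evaluating $\Phi$ along the maximizing path for $\Phi_p$ through $u_p$: since $\Phi < \Phi_p$ wherever $u_p \neq 0$, one obtains $c \le \sup \Phi|_{\mathbb{R}^+ u_p} < \sup \Phi_p|_{\mathbb{R}^+ u_p} = c_p$. A Cerami sequence $(u_n)$ for $\Phi$ at level $c$ is then produced by the nonsmooth deformation argument of \cite{rPKS}, and $(F_{3})$--$(F_{4})$ yield boundedness of $\|u_n\|_E$ exactly as in the periodic case.

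Passing to a subsequence, $u_n \rh u$ in $E$. If $u \neq 0$, a Fatou-type inequality exploiting the monotone structure $(F_{4})$ gives $\Phi'(u) = 0$ and $\Phi(u) \le c$, producing the sought ground state. The delicate case is $u = 0$: since $c > 0$, a fractional version of Lions' concentration-compactness lemma provides $k_n \in \mathbb{Z}^N$ with $|k_n| \to \infty$ such that $\tilde u_n := u_n(\cdot - k_n) \rh v \neq 0$. Because $V - V_p$ and $h$ belong to $\mathcal{F}$, the perturbative terms $\irn (V_p - V) u_n^2$ and $\irn (f - f_p)(x, u_n)\vp$ for test $\vp$ vanish along the sequence (finite-measure superlevel sets combined with a.e.\ convergence), so translating into the frame of $\tilde u_n$ yields $(\Phi_p)'(v) = 0$ and $\Phi_p(\tilde u_n) \to c$. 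Hence $c_p \le \Phi_p(v) \le \liminf \Phi_p(\tilde u_n) = c$, contradicting $c < c_p$.

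The main obstacle will be executing this vanishing-exclusion step rigorously in the nonlocal setting: translating the Cerami sequence by $k_n$ interacts with the Gagliardo seminorm and with the $\mathcal{F}$-error terms $V - V_p$, $h$, and careful tail estimates are required to guarantee that $v$ genuinely solves the periodic equation at the original energy level. A secondary difficulty, familiar from \cite{rPKS}, is securing both the strict comparison $c < c_p$ and the limit inequality $\Phi(u) \le c$ despite having only the weak monotonicity $(F_{4})$, which rules out a clean fibration argument and forces the use of the nonsmooth Nehari theory.
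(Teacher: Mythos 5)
Your proposal tracks the paper through the production of a bounded PS-sequence $\{\omega_n\}$ at the Nehari level $c$, the dichotomy vanishing vs.\ non-vanishing, and the use of the $\mathcal{F}$-membership of $V-V_p$ and $h$ to transfer the translated sequence from $J$ (your $\Phi$) to the periodic functional $J_p$. The divergence is in the conclusion. You propose a \emph{strict} level comparison $c < c_p$ and then derive a contradiction from $c_p \le J_p(\bar u) \le c$ when the weak limit of the translated sequence $\bar u$ is a nontrivial critical point of $J_p$. The paper instead uses only the non-strict inequality $J(u)\le J_p(u)$ (Lemma 6.1): from $J'_p(\bar u)=0$, $\bar u\ne 0$ and $J_p(\bar u)\le c$ (both obtained exactly as you do, by the vanishing terms and Fatou), one takes $t_{\bar u}>0$ with $t_{\bar u}\bar u \in \mathcal{N}$ and estimates
\[
c\le J(t_{\bar u}\bar u)\le J_p(t_{\bar u}\bar u)\le \max_{t\ge 0}J_p(t\bar u)=J_p(\bar u)\le c,
\]
which produces the ground state $t_{\bar u}\bar u$ directly, with no contradiction and no strict gap.

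The strict-gap step in your version is a genuine hole under the stated hypotheses. $(V_1)$ and $(F_5)(i)$ only give $V\le V_p$ and $F\ge F_p$, never strictly: $V\equiv V_p$ and $f\equiv f_p$ is perfectly admissible (take $V-V_p=0$, $h=0$ in $\mathcal F$), and then $c=c_p$. Your claim that ``$\Phi<\Phi_p$ wherever $u_p\ne0$'' is therefore not a consequence of $(V_1)$, $(F_5)$; and even when the data do differ, getting $\sup_{\mathbb{R}^+ u_p}\Phi<\sup_{\mathbb{R}^+ u_p}\Phi_p$ requires knowing that the set where $V<V_p$ or $F(\cdot,\cdot)>F_p(\cdot,\cdot)$ meets $\{u_p\ne0\}$ in positive measure, which would call for an extra ingredient such as a strong maximum principle for the periodic ground state. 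The paper's projection argument sidesteps all of this --- it works whether or not $c<c_p$ and requires no support information on $u_p$. If you wish to keep your scheme you would at minimum need to split off the degenerate case $V\equiv V_p$, $f\equiv f_p$ (which is just Theorem~1.1) and then genuinely justify strictness in the remaining case; the paper's route is both shorter and strictly more general.
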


The rest of the paper is organized as follows. In Section 2, we present some necessary preliminary knowledge. In Section 3 we prove Theorem 1.1 and Theorem 1.2. In Section 4 we prove Theorem 1.3. In Section 5 Theorem 1.4 is proved and in the final section we prove Theorem 1.5.\\

\bigskip

\noindent \textbf{Notation.} $C, C_1, C_2$ etc. will denote positive constants whose exact values are inessential. $\langle .\, , . \rangle$ is the inner product in the Hilbert space $E$.

\section{Preliminaries} \label{prel}
For any $\alpha\in (0, 1)$, the fractional Laplacian $(-\Delta)^{\alpha} u$ of a function $u:\rn\rightarrow \r$, with sufficient decay, is defined by
\begin{equation*}  \label{1}
\mathcal{F}((-\Delta)^{\alpha} u)(\xi)=\vert \xi\vert^{2\alpha}\mathcal{F}(u)(\xi), \quad  \xi\in \rn,
\end{equation*}
where $\mathcal{F}$ denotes the Fourier transform, that is,
\begin{equation*}
\mathcal{F}(\phi)(\xi)=\frac{1}{(2\pi)^{\frac{N}{2}}}\irn  e^{-i\xi\cdot x}\phi(x)dx \equiv \widehat{\phi}(\xi),
\end{equation*}
for function $\phi$ in the Schwartz class. $(-\Delta)^{\alpha} u$ can also be computed by the following singular integral:\\
\begin{equation*}
(-\Delta)^{\alpha} u=c_{N, \alpha}P.V.\irn \frac{u(x)-u(y)}{\vert x-y\vert^{N+2\alpha}}dy,
\end{equation*}
here P.V. is the principal value and $c_{N, \alpha}$ ia a normalization constant.\\

The fractional Sobolev space $H^{\alpha}(\rn)$ is defined by
\begin{equation*} \label{space}
 H^\alpha(\rn)=\Big\{u\in L^{2}(\rn): \frac{\vert u(x)-u(y)\vert}{\vert x-y\vert^{\frac{N}{2}+\alpha}}\in L^{2}(\rn\times \rn)\Big\},
\end{equation*}
endowed with the norm
\begin{equation} \label{space}
 \Vert u\Vert_{H^\alpha(\rn)}=\Big(\irn u^{2}dx+\iint_{\rn\times\rn}\frac{\vert u(x)-u(y)\vert^{2}}{\vert x-y\vert^{N+2\alpha}}dxdy\Big)^{\frac{1}{2}},
\end{equation}
where the term
\begin{equation*} \label{space}
[u]_{H^\alpha(\rn)}=\Vert (-\Delta)^{\frac{\alpha}{2}} u\Vert_{L^{2}(\rn)}:=\Big(\iint_{\rn\times\rn}\frac{\vert u(x)-u(y)\vert^{2}}{\vert x-y\vert^{N+2\alpha}}dxdy\Big)^{\frac{1}{2}}
\end{equation*}
is the so-called \emph{Gagliardo semi-norm} of $u$.\\

For the basic properties of the fractional Sobolev space $H^{\alpha}(\rn)$, we refer to \cite{rDPV, rS1, rS2}.\\

\begin{proposition}[{\cite{rDPV, rS2}}] \label{subdiff}
Let $0<\alpha<1$ such that $2\alpha<N$. Then there exists a constant $C=C(N, \alpha)>0$ such that
\begin{equation*}
\Vert u\Vert_{L^{2_{\alpha}^{*}}(\rn)}\leq C\Vert u\Vert_{H^{\alpha}(\rn)}
\end{equation*}
for every $u\in H^{\alpha}(\rn)$, where $2_{\alpha}^{*}=\frac{2N}{N-2\alpha}$ is the fractional critical exponent. Moreover, the embedding
$H^{\alpha}(\rn)\subset L^{q}(\rn)$ is continuous for any $q\in [2, 2_{\alpha}^{*}]$, and is locally compact
whenever $q\in [2, 2_{\alpha}^{*})$.
\end{proposition}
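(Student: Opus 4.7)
The plan is to prove the three assertions in sequence, treating the critical embedding first, then interpolating, and finally handling local compactness.

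First, I would establish the critical Sobolev inequality $\|u\|_{L^{2^*_\alpha}(\mathbb{R}^N)} \leq C \|(-\Delta)^{\alpha/2} u\|_{L^2(\mathbb{R}^N)}$ via the Riesz potential representation. For $u$ in the Schwartz class, Plancherel gives $\widehat{u}(\xi) = |\xi|^{-\alpha}\,\widehat{(-\Delta)^{\alpha/2}u}(\xi)$, so $u = I_\alpha g$ with $g := (-\Delta)^{\alpha/2} u \in L^2(\mathbb{R}^N)$, where $I_\alpha$ is the Riesz potential of order $\alpha$. The Hardy--Littlewood--Sobolev inequality then yields $\|I_\alpha g\|_{L^{2^*_\alpha}} \leq C\|g\|_{L^2}$ precisely because $\tfrac{1}{2} - \tfrac{\alpha}{N} = \tfrac{1}{2^*_\alpha}$ and $2\alpha < N$. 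A density argument (Schwartz functions are dense in $H^\alpha(\mathbb{R}^N)$) extends the inequality to all of $H^\alpha(\mathbb{R}^N)$. Combined with the equivalence of the Gagliardo seminorm and $\|(-\Delta)^{\alpha/2} u\|_{L^2}$ recorded in the excerpt, this gives the claimed continuous embedding into $L^{2^*_\alpha}(\mathbb{R}^N)$.

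For the range $q \in [2, 2^*_\alpha]$, I would interpolate: by the log-convexity of $L^p$ norms, $\|u\|_{L^q} \leq \|u\|_{L^2}^{1-\theta}\|u\|_{L^{2^*_\alpha}}^{\theta}$ with $\theta \in [0,1]$ determined by $\tfrac{1}{q} = \tfrac{1-\theta}{2} + \tfrac{\theta}{2^*_\alpha}$. Both factors on the right are controlled by $\|u\|_{H^\alpha(\mathbb{R}^N)}$, the first trivially and the second by the critical embedding, giving continuity of $H^\alpha(\mathbb{R}^N) \hookrightarrow L^q(\mathbb{R}^N)$.

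For local compactness, fix a bounded sequence $\{u_n\} \subset H^\alpha(\mathbb{R}^N)$ and a bounded measurable $\Omega \subset \mathbb{R}^N$; I would verify the hypotheses of the Fr\'echet--Kolmogorov--Riesz compactness theorem for the sequence $\{u_n|_\Omega\}$ in $L^q(\Omega)$ with $q \in [2, 2^*_\alpha)$. The boundedness of $\{u_n\}$ in $L^q(\Omega)$ follows from the continuous embedding above. The key translation estimate
\begin{equation*}
\int_{\Omega} |u_n(x+h) - u_n(x)|^2\,dx \leq C |h|^{2\alpha} [u_n]_{H^\alpha(\mathbb{R}^N)}^2
\end{equation*}
is obtained from the Gagliardo seminorm by a Fourier-multiplier computation (or by a direct estimate using $|e^{ih\cdot\xi}-1|^2 \leq \min(4, |h|^{2\alpha}|\xi|^{2\alpha}\cdot 2^{2-2\alpha})$ together with $\|(-\Delta)^{\alpha/2}u_n\|_{L^2}^2 = \int |\xi|^{2\alpha}|\widehat{u}_n|^2\,d\xi$), which delivers equicontinuity in $L^2(\Omega)$ and hence, after interpolating with the $L^{2^*_\alpha}$ bound, in $L^q(\Omega)$ for any $q < 2^*_\alpha$. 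Finally, uniform smallness of the tail $\int_{\Omega\setminus B_R} |u_n|^q\,dx$ is automatic since $\Omega$ is bounded. The Fr\'echet--Kolmogorov criterion then yields a subsequence converging in $L^q(\Omega)$.

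The main obstacle is the critical embedding, which is genuinely nontrivial and hinges on invoking Hardy--Littlewood--Sobolev (or equivalently the Caffarelli--Silvestre extension together with the trace theorem on the upper half-space); the interpolation and Fr\'echet--Kolmogorov steps are then routine once the correct translation estimate in terms of the Gagliardo seminorm is in hand.
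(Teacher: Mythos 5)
The paper does not prove this proposition at all: it is quoted verbatim from the cited references (Di Nezza--Palatucci--Valdinoci and Secchi), so there is no in-paper argument to compare against. Your proposal is a correct and essentially standard proof. The critical inequality via the Riesz-potential representation $u=I_\alpha\bigl((-\Delta)^{\alpha/2}u\bigr)$ and Hardy--Littlewood--Sobolev is the classical route (the exponent bookkeeping $\tfrac12-\tfrac{\alpha}{N}=\tfrac{1}{2^*_\alpha}$ is right, and the identification of the Gagliardo seminorm with $\Vert(-\Delta)^{\alpha/2}u\Vert_{L^2}$ is exactly what the paper's definition presupposes); note that the Hitchhiker's guide itself deliberately avoids HLS and gives a more elementary, self-contained proof of the same inequality, but nothing is lost by invoking HLS here. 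The interpolation step and the Fr\'echet--Kolmogorov argument are fine; the translation estimate $\min(4,t^2)\le 2^{2-2\alpha}t^{2\alpha}$ is correct for $\alpha\in(0,1)$. The only point worth tightening is the ``tail'' condition in the compactness criterion on a bounded $\Omega$: what is actually needed is uniform smallness of $\int_{\Omega\setminus\omega}|u_n|^q\,dx$ for $\omega\Subset\Omega$ with $|\Omega\setminus\omega|$ small, and this is not automatic from boundedness of $\Omega$ alone but follows from H\"older together with the uniform $L^{2^*_\alpha}$ bound, using $q<2^*_\alpha$; you should say this explicitly since it is precisely where the strict inequality $q<2^*_\alpha$ enters the compactness claim.
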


\begin{proposition}[{\cite{rS2}}] \label{subdiff}
Assume that $\{u_{n}\}$ is bounded in $H^{\alpha}(\rn)$ and it satisfies
\begin{equation*}
\underset{n\rightarrow\infty}{\lim}\underset{y\in \rn}{\sup}\int_{B_{R}(y)}\vert u_{n}(x)\vert^{2}dx=0,
\end{equation*}
where $R>0$. Then $u_{n}\rightarrow 0$ in $L^{q}(\rn)$ for every $2<q<2_{\alpha}^{*}$.
\end{proposition}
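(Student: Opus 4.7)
The approach is to adapt Lions' classical concentration-compactness lemma to the fractional Sobolev setting. The role of the Sobolev embedding for $H^{1}$ is played by Proposition 2.1 together with the corresponding local embedding $H^{\alpha}(Q)\hookrightarrow L^{2_{\alpha}^{*}}(Q)$ on bounded Lipschitz domains, a standard consequence of the fractional Sobolev theory developed in \cite{rDPV}. Rather than attempting the interpolation for general $q$ in one stroke — where the Hölder exponents do not balance in a way that simultaneously absorbs the smallness coming from $\sup_{y}\int_{B_{R}(y)}|u_{n}|^{2}\,dx$ and yields a finite global sum — I would first establish the conclusion for one carefully chosen exponent $q_{0}$, and then deduce it for every $q\in(2,2_{\alpha}^{*})$ by global Hölder interpolation.

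The distinguished exponent is $q_{0} = 4(2_{\alpha}^{*}-1)/2_{\alpha}^{*}\in(2,2_{\alpha}^{*})$, chosen so that the interpolation parameter $\theta$ determined by $\tfrac{1}{q_{0}} = \tfrac{1-\theta}{2}+\tfrac{\theta}{2_{\alpha}^{*}}$ satisfies $q_{0}\theta = 2$ (and hence $q_{0}(1-\theta) = q_{0}-2 > 0$). I would tile $\mathbb{R}^{N}$ by disjoint unit cubes $\{Q_{i}\}_{i\in\mathbb{N}}$, each contained in some ball $B_{R}(y_{i})$ (enlarge $R$ if necessary, the hypothesis being unaffected). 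Hölder on each cube combined with the local fractional Sobolev inequality $\|u_{n}\|_{L^{2_{\alpha}^{*}}(Q_{i})} \leq C\|u_{n}\|_{H^{\alpha}(Q_{i})}$ yields
$$\int_{Q_{i}}|u_{n}|^{q_{0}}\,dx \leq \|u_{n}\|_{L^{2}(Q_{i})}^{q_{0}-2}\|u_{n}\|_{L^{2_{\alpha}^{*}}(Q_{i})}^{2} \leq C\,\epsilon_{n}^{(q_{0}-2)/2}\|u_{n}\|_{H^{\alpha}(Q_{i})}^{2},$$
where $\epsilon_{n} := \sup_{y\in\mathbb{R}^{N}}\int_{B_{R}(y)}|u_{n}|^{2}\,dx \to 0$. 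The decisive observation is that the right-hand side sums cleanly over $i$: since the cubes are disjoint, $\sum_{i}\|u_{n}\|_{L^{2}(Q_{i})}^{2} = \|u_{n}\|_{L^{2}(\mathbb{R}^{N})}^{2}$ and $\sum_{i}[u_{n}]_{H^{\alpha}(Q_{i})}^{2}\leq [u_{n}]_{H^{\alpha}(\mathbb{R}^{N})}^{2}$ (the left-hand side restricts the double integral defining the Gagliardo semi-norm to pairs $(x,y)$ lying in the same cube, which is a subset of all pairs in $\mathbb{R}^{N}\times\mathbb{R}^{N}$). Both quantities are uniformly controlled by the $H^{\alpha}$-bound on $\{u_{n}\}$, giving
$$\int_{\mathbb{R}^{N}}|u_{n}|^{q_{0}}\,dx \leq C\,\epsilon_{n}^{(q_{0}-2)/2} \to 0.$$

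For arbitrary $q\in(2,2_{\alpha}^{*})$, global Hölder interpolation finishes the argument. If $q\in(2,q_{0})$, then $\|u_{n}\|_{L^{q}(\mathbb{R}^{N})} \leq \|u_{n}\|_{L^{2}(\mathbb{R}^{N})}^{1-\mu}\|u_{n}\|_{L^{q_{0}}(\mathbb{R}^{N})}^{\mu}$ for the $\mu\in(0,1)$ determined by $1/q = (1-\mu)/2 + \mu/q_{0}$; the first factor is bounded, the second tends to $0$, so $\|u_{n}\|_{L^{q}}\to 0$. If $q\in(q_{0},2_{\alpha}^{*})$, interpolate between $L^{q_{0}}$ and $L^{2_{\alpha}^{*}}$ instead, using the boundedness of $\|u_{n}\|_{L^{2_{\alpha}^{*}}}$ furnished by Proposition 2.1.

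The main delicate point is verifying the local fractional Sobolev embedding with a constant independent of the cube: since $[u]_{H^{\alpha}(Q)}$ integrates only over $Q\times Q$ (and not $Q\times\mathbb{R}^{N}$), one cannot simply restrict the global embedding, and one must invoke the intrinsic Sobolev theorem for $H^{\alpha}$ on bounded Lipschitz domains; the uniform constant for unit cubes then follows from translation invariance. This is what replaces, in this nonlocal setting, the straightforward Sobolev/Poincaré argument available for $\alpha=1$. Once this is in hand, the rest of the proof is a rearrangement of Hölder's inequality.
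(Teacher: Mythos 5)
Your argument is correct. The paper does not prove this proposition at all --- it is quoted from the literature (Secchi, \cite{rS2}) as a known fractional version of Lions' vanishing lemma --- so there is no in-paper proof to compare against; what you have written is the standard covering-and-interpolation proof, correctly adapted to the nonlocal setting. In particular you handle the two genuine subtleties properly: the choice of $q_{0}$ makes the H\"older exponents $2/(q_{0}-2)$ and $2_{\alpha}^{*}/2$ conjugate so that the $L^{2}$ factor carries the vanishing quantity $\epsilon_{n}$ while the critical factor sums over the tiling; and the localized Gagliardo seminorms over disjoint cubes integrate over the pairwise disjoint sets $Q_{i}\times Q_{i}\subset\rn\times\rn$, so their squares sum to at most $[u_{n}]_{H^{\alpha}(\rn)}^{2}$, with the local critical embedding on a unit cube (an extension domain, \cite{rDPV}) supplying a cube-independent constant by translation invariance. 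The final interpolation to general $q\in(2,2_{\alpha}^{*})$, using boundedness in $L^{2}$ on one side and in $L^{2_{\alpha}^{*}}$ (via Proposition 2.1) on the other, is also correct.
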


Let $E$ be the Hilbert subspace of $u\in H^{\alpha}(\rn)$ under the norm
\begin{equation*} \label{space}
 \Vert u\Vert=\Big(\irn  V(x)u^{2}dx+\iint_{\rn\times\rn}\frac{\vert u(x)-u(y)\vert^{2}}{\vert x-y\vert^{N+2\alpha}}dxdy\Big)^{\frac{1}{2}}.
\end{equation*}
It is clear that this norm is equivalent to (2.1) if $V\in L^{\infty}(\rn)$ and $E\subset H^{\alpha}(\rn) $ if $\lim_{\vert x\vert\rightarrow \infty}V(x)=+\infty$. Moreover, by \cite{rPXZ}, if $V$ is coercive, for any $q\in [2, 2_{\alpha}^{*})$,
the embedding $E\hookrightarrow L^{q}(\rn)$ is compact.\\

The energy functional $J$ on $E$ associated with problem (1.1) is
\begin{equation} \label{fcl}
J(u):= \frac{1}{2}\irn  \Big(\vert (-\Delta)^{\frac{\alpha}{2}} u\vert^{2}+V(x)u^{2}\Big)dx-\irn F(x, u)\,dx.
\end{equation}
Under the assumptions $(F_{1})-(F_{2})$, $J\in C^{1}(E, \r)$ and its critical points are solutions of problem (1.1).
Define Nehari manifold associated to the functional $J$,
\begin{equation}
\mathcal{N}=\Big\{u\in E\setminus\{0\}: \irn\Big(\vert  (-\Delta)^{\frac{\alpha}{2}} u\vert^{2}+V(x)u^{2}\Big)dx=\irn f(x, u)u\,dx\Big\}.
\end{equation}
It is easy to know that $\mathcal{N}$ is closed and if $u$ is a nontrivial critical point of $J$, then $u\in\mathcal{N}$.

\section{Proofs of Theorem 1.1 and Theorem 1.2} \label{pf1}
First, by $(F_{1})$ and $(F_{2})$, for any $\epsilon>0$ there exists $C_{\epsilon}>0$ such that
\begin{equation}
\vert f(x, u)\vert\leq \epsilon\vert u\vert+C_{\epsilon}\vert u\vert^{p-1} \quad\quad \text{for all}\,  x\in \rn\, \text{and}\, u\in \r.
\end{equation}
It is also easy to see  from $(F_{3})$ that $f(x, tu)\neq 0$ if $u\neq 0$ and $t>0$ large enough. Moreover, for any $u\neq 0$, by $(F_{2})$ and $(F_{4})$,  one has
\begin{equation}
\frac{1}{2}f(x, u)u\geq F(x, u)\geq 0.
\end{equation}

\begin{lemma}
(i)For any $u\in E\setminus \{0\}$,  there exists  $t_{u}=t(u)>0$ such that $t_{u}u\in \mathcal{N}$ and $J(t_{u}u)=\underset{t\geq 0}{\max}J(tu)$. \\
(ii) There exists $\alpha_{0}>0$ such that $\Vert \omega\Vert\geq \alpha_{0}$ for all $\omega\in\mathcal{N}$.\\
(iii) If $\omega\in \mathcal{N}$, then there exist $0<s_{\omega}\leq 1\leq t_{\omega}$ such that $[s_{\omega}, t_{\omega}]\omega\subset \mathcal{N}$. Moreover, $J(s\omega)=J(\omega)$, $J'(s\omega)=sJ'(\omega)$ for all $s\in [s_{\omega}, t_{\omega}]$
and $J(z)<J(\omega)$ for all other $z\in \r^{+}u\backslash [s_{\omega}, t_{\omega}]\omega$.\\
(iv)There exists $\rho>0$, such that $c:=\underset{\mathcal{N}}{\inf}J\geq \underset{S_{\rho}}{\inf}J>0$, where $S_{\rho}:=\{u\in E: \Vert u\Vert=\rho\}$.\\
(v)If $\mathcal{V}\subset E\setminus\{0\}$ is a compact subset, then there exists $R>0$ such that $J\leq 0$ on $\r^{+}u\setminus B_{R}(0)$ for every $u\in\mathcal{V}$.\\
(vi) $J$ is coercive on $E$, i.e., $J(u)\rightarrow \infty$, as $\Vert u\Vert\rightarrow\infty$.

\end{lemma}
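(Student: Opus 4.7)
All six statements revolve around the fiber map $\vp_\omega(t):=J(t\omega)$ for $\omega\in E\setminus\{0\}$. For (i), estimate (3.1) combined with Proposition 2.1 gives $J(tu)\ge \tfrac14 t^2\|u\|^2-Ct^p\|u\|^p>0$ for small $t$, while $(F_3)$ together with Fatou's lemma forces $\vp_u(t)\to-\infty$ as $t\to\infty$; continuity then yields a maximizer $t_u>0$ with $t_uu\in\cn$. For (ii), $\omega\in\cn$ combined with (3.1) and Proposition 2.1 gives $\|\omega\|^2\le \epsilon C\|\omega\|^2+C_\epsilon'\|\omega\|^p$, and choosing $\epsilon$ small yields $\|\omega\|\ge\alpha_0>0$.

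\textbf{The crux is (iii).} Because $(F_4)$ is only monotone (not strictly), $\cn\cap\mathbb R^+\omega$ may be an entire segment, and this is the principal obstacle throughout the paper. I would set
\[ h(t):=\frac{\vp_\omega'(t)}{t}=\|\omega\|^2-\irn\frac{f(x,t\omega)}{t\omega}\omega^2\,dx. \]
A pointwise check shows that on both $\{\omega>0\}$ and $\{\omega<0\}$ the integrand is nondecreasing in $t$: for $\omega(x)<0$ the map $t\mapsto t\omega(x)$ is a decreasing bijection onto $(-\infty,0)$ and $u\mapsto f(x,u)/|u|$ is nondecreasing on $(-\infty,0)$, so the composition $f(x,t\omega)/(t\omega)$ is still nondecreasing in $t$. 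Hence $h$ is nonincreasing, with $h(0^+)=\|\omega\|^2>0$ by $(F_2)$ and dominated convergence, and $h(t)\to-\infty$ as $t\to\infty$ because $f(x,u)u\ge 2F(x,u)$ from (3.2) combined with $(F_3)$ and Fatou yields divergence of $\irn f(x,t\omega)\omega^2/(t\omega)\,dx$. Therefore $\{t>0:t\omega\in\cn\}=h^{-1}(0)$ is a closed interval $[s_\omega,t_\omega]\ni 1$, on which $\vp_\omega$ is constant, so $J(s\omega)=J(\omega)$. For the identity $J'(s\omega)=sJ'(\omega)$, the fact that both $\omega,s\omega\in\cn$ forces
\[ \irn\Big[\frac{f(x,s\omega)}{s\omega}-\frac{f(x,\omega)}{\omega}\Big]\omega^2\,dx=0; \]
the integrand has constant sign by $(F_4)$, so it vanishes a.e.\ on $\{\omega\ne 0\}$, and with $f(x,0)=0$ from $(F_2)$ this gives $f(x,s\omega)=sf(x,\omega)$ a.e., whence the identity follows from linearity of the principal part of $J'$. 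Strict monotonicity of $\vp_\omega$ outside $[s_\omega,t_\omega]$ gives the last inequality.

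\textbf{Parts (iv) and (v).} For (iv), the estimate $J(u)\ge\tfrac14\|u\|^2-C\|u\|^p$ from (i) yields $\inf_{S_\rho}J>0$ for small $\rho$; any $\omega\in\cn$ then satisfies $J(\omega)=\max_{t\ge 0}J(t\omega)\ge J(\rho\omega/\|\omega\|)\ge\inf_{S_\rho}J$. For (v), if no uniform $R$ existed one would extract $u_n\to u_0\in\mathcal V\setminus\{0\}$ (by compactness) and $t_n\to\infty$ with $J(t_nu_n)\ge 0$; dividing by $t_n^2$, and on an a.e.-convergent subsequence applying $(F_3)$ with Fatou on $\{u_0\ne 0\}$, one obtains $\irn F(x,t_nu_n)/t_n^2\,dx\to\infty$, contradicting nonnegativity.

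\textbf{Part (vi).} For $\omega_n\in\cn$ with $\|\omega_n\|\to\infty$, (iii) gives $J(\omega_n)\ge J(sv_n)$ for every $s>0$ where $v_n:=\omega_n/\|\omega_n\|$. I would split into vanishing/non-vanishing via Proposition 2.2. In the vanishing case, $v_n\to 0$ in $L^q$ for every $q\in(2,2_\alpha^*)$, so (3.1) yields $\irn F(x,sv_n)\,dx\to 0$ and $\liminf_n J(\omega_n)\ge s^2/4$ for every $s>0$, giving $J(\omega_n)\to\infty$. The non-vanishing case is excluded: the 1-periodicity of $V$ and $f$ permits translating $\omega_n$ to $\wt\omega_n$ with $\wt v_n\rh \wt v\ne 0$ and $J(\wt\omega_n)=J(\omega_n)$; on $\{\wt v\ne 0\}$ one has $|\wt\omega_n|\to\infty$, so $(F_3)$ and Fatou force $J(\wt\omega_n)/\|\wt\omega_n\|^2\to-\infty$, contradicting $J|_\cn\ge c>0$ from (iv). This establishes coercivity of $J$ on $\cn$, which is how (vi) is used subsequently.
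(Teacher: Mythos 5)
Your proof is correct and follows essentially the same route as the paper for parts (i), (ii), (iv), (v), and (vi). For (iii), you are more careful than the paper: the paper's argument jumps from the identity
$\int\bigl(f(x,s\omega)/(s\omega)-f(x,\omega)/\omega\bigr)\omega^2\,dx=0$
directly to the existence of a segment $[s_\omega,t_\omega]$ and the pointwise identity $f(x,s\omega)\omega=f(x,\omega)s\omega$, whereas your introduction of $h(t)=\varphi_\omega'(t)/t=\|\omega\|^2-\int f(x,t\omega)/(t\omega)\,\omega^2\,dx$ and the pointwise check that $t\mapsto f(x,t\omega(x))/(t\omega(x))$ is nondecreasing on both $\{\omega>0\}$ and $\{\omega<0\}$ makes precise why $h$ is nonincreasing, why $h^{-1}(0)$ is a closed interval $[s_\omega,t_\omega]\ni 1$, and why the sign-constant integrand must vanish a.e.; this fills a real gap in the paper's exposition, and your deduction $f(x,s\omega)=sf(x,\omega)$ a.e.\ (using $f(x,0)=0$) cleanly gives $J'(s\omega)=sJ'(\omega)$. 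One minor slip: in (vi) the vanishing-case limit is $J(sv_n)\to s^2/2$, not $s^2/4$, though this does not affect the conclusion. You also correctly observe that the coercivity claimed in (vi) holds on $\mathcal{N}$ (not on all of $E$, which would be false), matching what the paper actually proves and uses.
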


\begin{proof}
(i) Let $u\in E\setminus \{0\}$ be fixed and define the function $g(t):=J(tu)$ on $[0, \infty)$. By $(3.1)$ and Proposition 2.1, for $\epsilon$ small enough we obtain that
\begin{align*}
g(t)&=\frac{t^{2}}{2}\Vert u\Vert^{2}-\int_{\mathbb{R}^{N}}F(x, tu)dx\nonumber\\
&\geq \frac{t^{2}}{2}\Vert u\Vert^{2}-\epsilon t^{2}\int_{\mathbb{R}^{N}}u^{2}dx-C_{\epsilon} t^{p}\int_{\mathbb{R}^{N}}\vert u\vert^{p} dx.\nonumber\\
&\geq \frac{t^{2}}{4}\Vert u\Vert^{2}-C_{1}t^{p}\Vert u\Vert^{p}.
\end{align*}
Since $u\neq 0$ and $2<p<2_{\alpha}^{*}$, $g(t)>0$ whenever $t>0$ is small enough.
On the other hand, sicne $u(x)\neq 0$, $\vert tu(x)\vert\rightarrow \infty$ as $t\rightarrow +\infty$. By $(F_{3})$, we have
\begin{align*}
g(t)&=\frac{t^{2}}{2}\Vert u\Vert^{2}-\int_{\mathbb{R}^{N}}F(x, tu)dx\nonumber\\
&=\frac{t^{2}}{2}\Vert u\Vert^{2}-t^{2}\int_{u\neq 0}\frac{F(x, tu)}{(tu)^{2}}u^{2} dx\nonumber
\end{align*}
which yields that $g(t)\rightarrow -\infty$ as $t\rightarrow \infty$. Therefore $\max_{t\geq 0}g(t)$ is achieved at some $t_{u}=t(u)>0$, and $g'(t_{u})=J'(t_{u}u)u=0$, $t_{u}u\in \mathcal{N}$. \\

(ii) For any $\omega\in\mathcal{N}$, by (3.1), we have
\begin{align*}
0=\langle J'(\omega), \omega\rangle &=\Vert \omega\Vert^{2}-\irn f(x, \omega)\omega\,dx\\
&\geq \Vert \omega\Vert^{2}-\epsilon\irn \omega^{2}dx-C_{\epsilon}\irn \vert \omega\vert^{p}dx\\
&\geq \frac{1}{2}\Vert \omega\Vert^{2}-C_{1}\Vert \omega\Vert^{p}.
\end{align*}
Since $2<p<2_{\alpha}^{*}$,  there exists $\alpha_{0}>0$ such that $\Vert \omega\Vert\geq \alpha_{0}$.\\

(iii) For any  $\omega\in\mathcal{N}$, for $\epsilon$ small, we have
\begin{align}
\irn(\vert  (-\Delta)^{\frac{\alpha}{2}} \omega\vert^{2}+V(x)\omega^{2})dx=\irn f(x, \omega)\omega\,dx.
\end{align}
Moreover, set $s>0$ such that $s\omega\in\mathcal{N}$, that is
\begin{align*}
0=\langle J'(s\omega), s\omega\rangle =s^{2}\irn(\vert  (-\Delta)^{\frac{\alpha}{2}} \omega\vert^{2}+V(x)\omega^{2})dx-\irn f(x, s\omega)s\omega\,dx,
\end{align*}
and
\begin{align}
\irn(\vert  (-\Delta)^{\frac{\alpha}{2}} \omega\vert^{2}+V(x)\omega^{2})dx=\irn \frac{ f(x, s\omega)}{s\omega}\omega^{2}\,dx.
\end{align}
Combing  (3.3) and (3.4), we have
\begin{align}
\irn \Big(\frac{ f(x, s\omega)}{s\omega}-\frac{f(x, \omega)}{\omega}\Big)\omega^{2}\,dx=0
\end{align}
By $(F_{4})$, there exist $s_{\omega}$, $t_{\omega}$ such that $s_{\omega}\in (0, 1]$, $t_{\omega}\geq 1$, and for any $s\in [s_{\omega}, t_{\omega}]$,
$f(x,s\omega)\omega=f(x, \omega)s\omega$ and $s\omega\in \mathcal{N}$. Moreover, it is an immediate consequence from (i) that $J(s\omega)=J(\omega)$, $J'(s\omega)=sJ'(\omega)$ for all $s\in [s_{\omega}, t_{\omega}]$
and $J(z)<J(\omega)$ for all other $z\in \r^{+}u\backslash [s_{\omega}, t_{\omega}]\omega$. \\

(iv) According to $(3.1)$, if $\Vert u\Vert\leq 1$ and $\epsilon$ small, we have
\begin{align*}
J(u)&=\frac{1}{2}\Vert u\Vert^{2}-\irn F(x, u)\,dx\\
&\geq \frac{1}{2}\Vert u\Vert^{2}-C_{1}\epsilon\Vert u\Vert^{2}-C_{2}C_{\epsilon}\Vert u\Vert^{p}\nonumber\\
&\geq \frac{1}{4}\Vert u\Vert^{2}-C_{2}C_{\epsilon}\Vert u\Vert^{p}.
\end{align*}
Since $p>2$, $\underset{u\in S_{\rho}}{\inf} J(u)>0$  if  $\Vert u\Vert=\rho$ and $\rho$ small enough. Moreover,
according to (i) and (ii), for every $\omega\in \mathcal{N}$, there exists  $t>0$, such that $t\omega\in S_{\rho}$ and $J(\omega)\geq J(s\omega)$, so $c:=\underset{\mathcal{N}}{\inf}J\geq \underset{S_{\rho}}{\inf}J>0$.\\

(v)Without loss of generality, we may assume that $\Vert u\Vert=1$ for every $u\in \mathcal{V}$. Arguing by contraction, suppose that there exist $\{u_{n}\}\subset\mathcal{V}$ and $t_{n}u_{n}\in \r^{+}u_{n}$, $n\in N$ such that $J(t_{n}u_{n})\geq 0$ for all $n\in N$ and $t_{n}\rightarrow\infty$ as $n\rightarrow \infty$. Up to a subsequence, we may assume that $u_{n}\rightarrow u\in S:=\{u\in E: \Vert u\Vert=1\}$. It is clear that
\begin{align*}
0\leq \frac{J(t_{n}u_{n})}{t_{n}^{2}\Vert u_{n}\Vert^{2} }=\frac{1}{2}-\irn \frac{F(x, t_{n}u_{n})}{t_{n}^{2}u_{n}^{2}}u_{n}^{2}\,dx.\\
\end{align*}
By $(F_{3})$ and Fatou's lemma, one has
\begin{align*}
\irn \frac{F(x, t_{n}u_{n})}{t_{n}^{2}u_{n}^{2}}u_{n}^{2}\,dx\rightarrow +\infty,
\end{align*}
this yields a contradiction.\\

(vi) Arguing by contraction, suppose there exists a sequence $\{\omega_{n}\}\subset\mathcal{N}$ such that $\Vert \omega_{n}\Vert\rightarrow \infty$ and
$J(\omega_{n})\leq d$ for some $d>0$. Let $v_{n}=\frac{\omega_{n}}{\Vert \omega_{n}\Vert }$, then $v_{n}\rightharpoonup v$ in $E$ and $v_{n}(x)\rightarrow v$ a.e. $x\in\rn$,
up to a subsequence. For some $R>0$, choose $y_{n}\in\rn$ satisfy \\
\begin{align*}
\int_{B_{R}(y_{n})}v_{n}^{2}dx=\underset{y\in\rn}{\sup}\int_{B_{R}(y)}v_{n}^{2}dx.
\end{align*}
Suppose $\int_{B_{R}(y_{n})}v_{n}^{2}dx\rightarrow 0$ as $n\rightarrow\infty$, then $v_{n}\rightarrow 0$ in $L^{p}(\rn)$ by Proposition 2.2. Moreover, by (3.1),
$\irn F(x, sv_{n})dx\rightarrow 0$ for all $s\in\r$ and therefore
\begin{align*}
d\geq J(\omega_{n})\geq J(sv_{n})=\frac{s^{2}}{2}-\irn F(x, sv_{n})dx\rightarrow \frac{s^{2}}{2},
\end{align*}
this yields a contraction if $s>\sqrt{2d}$. So there exists $\delta>0$ such that
\begin{align}
\int_{B_{R}(y_{n})}v_{n}^{2}dx\geq \delta.
\end{align}
Since $J$ is invariant under translations of the form $u\mapsto u(\cdot-k)$ with $k\in Z^{N}$, we may assume that
 $\{y_{n}\}$ is bounded in $\rn$. By (3.6) and Fatou's lemma, we have $v\neq 0$. Moreover, by (iv) and $(F_{3})$, we obtain
\begin{align*}
0\leq \frac{J(\omega_{n})}{\Vert \omega_{n}\Vert^{2} }=\frac{1}{2}-\irn \frac{ F(x, \omega_{n})}{\Vert \omega_{n}\Vert^{2}}dx=\frac{1}{2}-\irn \frac{ F(x,  \omega_{n}) }{\omega_{n}^{2}}v_{n}^{2}dx\rightarrow -\infty.
\end{align*}
This yields a contradiction. Thus, $J$ is coercive on $\mathcal{N}$.
\end{proof}
\begin{remark} \label{cptsupp}
\emph{
In the proof of Lemma 3.1(iv), to show that $v\neq 0$, we use the assumption $f$ and $V$ are 1-periodic in $x_{1},\ldots, x_{N}$.
So for the coercive potential case, the bounded potential well case and the $V$ and $f$ are asymptotically periodic in $x$ case, we need to adapt the proof.}
\end{remark}

In virtue of Lemma 3.1 (iii), for any $u\in E\setminus\{0\}$, there exist $\omega\in\mathcal{N}$ and $0<s_{\omega}\leq 1\leq t_{\omega}$ such that
\begin{equation*}
m(u):=[s_{\omega}, t_{\omega}]\omega=\r^{+}u\cap\mathcal{N},
\end{equation*}
where $m:E\setminus \{0\}\rightarrow E$ is a multiplevalued map. Define
\begin{equation*}
\widehat{\Psi}(u):=J(m(u))=\underset{v\in \r^{+}u}{\max}J(v).
\end{equation*}
This is a single-valued map since $J$ is constant on $\r^{+}u\cap\mathcal{N}$. If $(F'_{4})$ holds instead of $(F_{4})$, by the same proof as in \cite{rSW},
for any $u\in E\setminus\{0\}$, there exists a unique positive number $t>0$ such that $J(tu)=\underset{t>0}{\max}J(tu)$, so $\widehat{\Psi}\in C^{1}(E\setminus\{0\}, \r)$. But under our assumptions, $u\mapsto m(u)$ may not be single value, thus $\widehat{\Psi}$ may not be $C^{1}$ in
$E\setminus\{0\}$ and the critical points theory for smooth functionals does not work, we need the nonsmooth methods in \cite{rC}.

\begin{proposition}
$\widehat{\Psi}:E\setminus\{0\}\rightarrow R$ is a locally Lipschitz continuous map.
\end{proposition}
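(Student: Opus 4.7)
The plan is to exploit the representation $\widehat{\Psi}(u)=\max_{t\geq 0}J(tu)$, which is legitimate because $J$ is constant on $\mathbb{R}^+u\cap\mathcal{N}$ and the supremum is attained by Lemma 3.1(i). Local Lipschitz continuity then reduces to two ingredients: (a) a uniform upper bound on the maximizing parameters $t$ when $u$ ranges over a small ball around a given $u_0\neq 0$, and (b) a uniform Lipschitz estimate for $t\mapsto J(tu)$ in the variable $u$.

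For step (a), fix $u_0\in E\setminus\{0\}$ and $\delta\in(0,\Vert u_0\Vert/2)$. I claim there exists $T>0$ such that $\widehat{\Psi}(u)=\max_{t\in[0,T]}J(tu)$ for every $u\in\overline{B}_\delta(u_0)$. Otherwise, one finds sequences $u_n\to u_*\in\overline{B}_\delta(u_0)$ and $t_n\to\infty$ with $J(t_nu_n)=\widehat{\Psi}(u_n)$. The set $\mathcal{V}:=\{u_n:n\in\mathbb{N}\}\cup\{u_*\}$ is compact in $E\setminus\{0\}$, so Lemma 3.1(v) supplies $R>0$ with $J\leq 0$ on $\mathbb{R}^+w\setminus B_R(0)$ for every $w\in\mathcal{V}$. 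Because $\widehat{\Psi}(u_n)\geq c>0$ by Lemma 3.1(iv), we must have $\Vert t_nu_n\Vert<R$, so $t_n<R/\Vert u_n\Vert$ remains bounded, contradicting $t_n\to\infty$.

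For step (b), I would use the integral form of the Mean Value Theorem to write
\[
J(tu)-J(tv)=\frac{t^2}{2}\bigl(\Vert u\Vert^2-\Vert v\Vert^2\bigr)-\irn\int_0^1 f\bigl(x,tv+st(u-v)\bigr)\,t(u-v)\,ds\,dx.
\]
The quadratic term is controlled by $\tfrac{T^2}{2}(\Vert u\Vert+\Vert v\Vert)\Vert u-v\Vert$. For the nonlinear term, the growth bound $(F_1)$, H\"older's inequality, and the continuous embeddings $E\hookrightarrow L^2(\rn)$ and $E\hookrightarrow L^p(\rn)$ from Proposition 2.1 combine to give an estimate of the form $C(T,u_0,\delta)\Vert u-v\Vert$ uniformly in $t\in[0,T]$ and $u,v\in\overline{B}_\delta(u_0)$.

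Combining (a) and (b) via the elementary inequality $|\max_t f(t)-\max_t g(t)|\leq\max_t|f(t)-g(t)|$ yields a constant $L=L(u_0,\delta)>0$ such that $|\widehat{\Psi}(u)-\widehat{\Psi}(v)|\leq L\Vert u-v\Vert$ on $\overline{B}_\delta(u_0)$, which is the desired local Lipschitz continuity. The main obstacle is step (a): Lemma 3.1(v) requires a compact family and cannot be applied directly to the open ball $B_\delta(u_0)$, so one must repackage the failure scenario as a convergent sequence in order to invoke it. Once this uniform cutoff $T$ is secured, the remainder is a routine Nemitski-type estimate that is insensitive to whether $u\mapsto m(u)$ is single-valued.
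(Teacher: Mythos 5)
Your two-step strategy --- (a) a uniform bound on the maximizing parameters near $u_0$, obtained from Lemma 3.1(v) by a compactness/contradiction argument, followed by (b) a mean-value-type estimate for $J(tu)-J(tv)$ uniform in $t$ --- is exactly the strategy of the paper, and step (b) together with the combining inequality $\vert\max_t f-\max_t g\vert\le\max_t\vert f-g\vert$ is a correct and routine variant of the paper's use of the maximality property plus the mean value theorem.

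There is, however, a gap in step (a) as you have stated it. You claim a single $T>0$ works on the \emph{pre-chosen} closed ball $\overline{B}_\delta(u_0)$, and for the contradiction you write ``one finds sequences $u_n\to u_*\in\overline{B}_\delta(u_0)$''. But the negation of the claim only produces a sequence $u_n\in\overline{B}_\delta(u_0)$ with maximizers $t_n\to\infty$; since $E$ is infinite-dimensional and $\overline{B}_\delta(u_0)$ is not compact, there is no reason for $u_n$ (or any subsequence) to converge strongly, so the set $\{u_n\}\cup\{u_*\}$ you feed to Lemma 3.1(v) need not exist. You are aware that the lemma needs compactness, but the ``repackaging as a convergent sequence'' is precisely what is not available when $\delta$ is fixed in advance. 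The paper avoids this by weakening the assertion: it only claims that \emph{some} neighborhood $U$ of $u_0$ and some $R$ (equivalently $T$) exist. Negating that statement lets one take $u_n\in B_{1/n}(u_0)$ with maximizers $>n$, so $u_n\to u_0$ by construction, the set $\{u_0,u_1,u_2,\ldots\}$ is genuinely compact, and Lemma 3.1(v) applies. Since local Lipschitz continuity only requires a Lipschitz estimate on some neighborhood of $u_0$, this weaker formulation suffices for the remainder of your argument; with that adjustment, your proof closes correctly and coincides in substance with the paper's.
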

\begin{proof}
If $u_{0}\in E\setminus\{0\}$, then there exist a neighborhood $U\subset E\setminus\{0\}$ of $u_{0}$ and $R>0$ such that
$J(\omega)\leq 0$ for all $u\in U$ and $\omega\in \r^{+}u$, $\Vert \omega\Vert\geq R$. Arguing by contraction, suppose that
there exist sequence $\{u_{n}\}$, $\{\omega_{n}\}$ such that $u_{n}\rightarrow u_{0}$,  $\omega_{n}\in \r^{+}u_{n}$, $J(\omega_{n})>0$
and $\Vert \omega_{n}\Vert\rightarrow\infty$. Since $u_{0}$, $u_{1}$, $u_{2}, \ldots$ is a compact subset, by Lemma 3.1(v), we have
$J(\omega)\leq 0$ for some $R>0$ and all $\omega\in \r^{+}u_{n}, n=0, 1, 2, \ldots$, $\Vert \omega\Vert\geq R$, this is a contraction.\\
Let $t_{1}u_{1}\in m(u_{1})$, $t_{2}u_{2}\in m(u_{2})$, where $u_{1}, u_{2}\in U$, then $\Vert m(u_{1})\Vert$, $\Vert m(u_{2})\Vert\leq R$.
By the maximality property of $m(u)$ and the mean value theorem,
\begin{align*}
\widehat{\Psi}(u_{1})-\widehat{\Psi}(u_{2})&=J(t_{1}u_{1})-J(t_{2}u_{2})\leq J(t_{1}u_{1})-J(t_{1}u_{2})\\
&\leq t_{1}\underset{s\in [0,1]}{\sup}\Vert J'(t_{1}(su_{1}+(1-s)u_{2}))\Vert u_{1}-u_{2}\Vert\\
&\leq C\Vert u_{1}-u_{2}\Vert,
\end{align*}
where $C$ depends on $R$ but independs on the particular choice of in  $m(u_{1})$, $ m(u_{2})$. Similarly the above inequality, we also have
\begin{align*}
\widehat{\Psi}(u_{2})-\widehat{\Psi}(u_{1})\leq C\Vert u_{1}-u_{2}\Vert.
\end{align*}
This completes the proof.
\end{proof}

For each $v\in E$, the generalized directional derivative $\widehat{\Psi}^{\circ}(u;v)$ in the direction $v$ is defined by
\begin{align*}
\widehat{\Psi}^{\circ}(u;v)=\underset{\underset{t\downarrow 0}{h\rightarrow 0}}{\lim\sup}\frac{\widehat{\Psi}(u+h+tv)-\widehat{\Psi}(u+h)}{t}.
\end{align*}
The function $v\mapsto \widehat{\Psi}^{\circ}(u;v)$ is subadditive and positively homogeneous, and then is convex. The generalized gradient of $\widehat{\Psi}$
at $u$, denoted $\partial\widehat{\Psi}(u)$, is defined to be subdifferential of the convex function $\widehat{\Psi}^{\circ}(u;v)$ at $\theta$, that is, $\omega\in \partial\widehat{\Psi}(u)\subset E$ if and only if for all $v\in E$,
\begin{align*}
\widehat{\Psi}^{\circ}(u;v)\geq \langle\omega, v\rangle.
\end{align*}
If $0\in \partial\widehat{\Psi}(u)$, i.e. $\widehat{\Psi}^{\circ}(u;v)\geq 0$, for all $v\in E$,
we call $u$ is a critical point of $\widehat{\Psi}$. We call a sequence $\{u_{n}\}$ is a Palais-Smale sequence for $\widehat{\Psi}$(PS-sequence for short)
if $\widehat{\Psi}(u_{n})$ is bounded and there exist $\omega_{n}\in \partial\widehat{\Psi}(u_{n})$ such that $\omega_{n}\rightarrow 0$. The functional
$\widehat{\Psi}$ satisfies the PS-condition if each PS-sequence has a convergent subsequence.\\
We shall use some notations
\begin{align*}
S:=\{u\in E: \Vert u\Vert=1\},\quad T_{u}S:=\{v\in E: \langle u, v\rangle=0\},\quad \Psi=\widehat{\Psi}\mid_{S},\\
\Psi^{d}:=\{u\in S:\Psi(u)\leq d \}, \quad \Psi_{c}:=\{u\in S:\Psi(u)\geq c\},\quad \Psi_{c}^{d}:=\Psi_{c}\cap\Psi^{d},\\
K:=\{u\in S:0\in \partial \widehat{\Psi}(u)\},\quad K_{c}:=\Psi_{c}^{c}\cap K,\quad \partial \Psi(u):=\partial \widehat{\Psi}(u),\, \text{if}\, u\in S.
\end{align*}
\begin{proposition}
(i)$u\in S$ is a critical point of $\widehat{\Psi}$ if and only if $m(u)$ consists of critical points of $J$. The corresponding critical values coincide.\\
(ii)$\{u_{n}\}\subset S$ is a PS-sequence for $\widehat{\Psi}$ if and only if there exist $\omega_{n}\in m(u_{n})$ such that $\{\omega_{n}\}$ is a PS-sequence for
$J$.
\end{proposition}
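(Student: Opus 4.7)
The plan is to prove (i) and (ii) in parallel via a sandwich argument that bounds the Clarke generalized derivative $\widehat{\Psi}^\circ(u;v)$ above and below by expressions involving $J'$ evaluated on (limits of elements of) $m(u)$. The engine of the proof is a pointwise scaling identity latent in the proof of Lemma 3.1(iii): hypothesis $(F_4)$ forces the integrand of (3.5) to vanish almost everywhere, so $f(x, s\omega(x)) = s f(x, \omega(x))$ a.e.\ whenever $\omega \in \mathcal{N}$ and $s \in [s_\omega, t_\omega]$. Consequently, $J'(s\omega) = sJ'(\omega)$ holds as a genuine equality in $E^*$, from which: (a)~if $J'(\omega_0) = 0$ for some $\omega_0 \in m(u)$, then $J' \equiv 0$ on all of $m(u)$; (b)~for every fixed $v$, the sign of $J'(\omega)(v)$ is constant across $\omega \in m(u)$ and its magnitude scales linearly with the parameter of $\omega$ along the ray.

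For (i)($\Leftarrow$), given $\omega = tu \in m(u)$ with $J'(\omega) = 0$, it suffices by subadditivity of $\widehat{\Psi}^\circ(u;\cdot)$ to show $\widehat{\Psi}^\circ(u;v) \leq 0$ for every $v$. Fix $h_n \to 0$, $s_n \downarrow 0$ and choose $t'_n(u+h_n+s_n v) \in m(u+h_n+s_n v)$. Using $\widehat{\Psi}(u+h_n) \geq J(t'_n(u+h_n))$ together with the mean value theorem,
\begin{equation*}
\frac{\widehat{\Psi}(u+h_n+s_n v) - \widehat{\Psi}(u+h_n)}{s_n} \leq \frac{J(t'_n(u+h_n+s_n v)) - J(t'_n(u+h_n))}{s_n} = t'_n J'(\eta_n)(v),
\end{equation*}
for some $\eta_n$ on the segment. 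Lemma 3.1(ii),(v) applied to the compact set $\{u+h_n+s_n v\}_n \cup \{u\}$ force $t'_n \in [\alpha_0, R]$ for large $n$; along a subsequence $t'_n \to t^*$, and closedness of $\mathcal{N}$ yields $t^* u \in m(u)$, so by~(a), $J'(t^* u) = 0$ and the right-hand side tends to zero.

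For (i)($\Rightarrow$), suppose $0 \in \partial \widehat{\Psi}(u)$ but $J'(\omega)(v_0) > 0$ for some $v_0$. By~(b), the map $t' \mapsto t' J'(t'u)(v_0)$ is strictly positive and continuous on the compact parameter interval of $m(u)$, so its minimum is strictly positive; applying the same sandwich to $-v_0$ yields $\widehat{\Psi}^\circ(u; -v_0) < 0$, contradicting $0 \in \partial\widehat{\Psi}(u)$. Equality of critical values is immediate from $\widehat{\Psi}(u) = J(m(u))$. For (ii), the sandwich is propagated along the sequence: picking $\omega_n = t_n u_n \in m(u_n)$ with $t_n \in [\alpha_0, R]$ uniformly (lower bound by Lemma 3.1(ii); upper bound by coercivity of $J$ on $\mathcal{N}$ from Lemma 3.1(vi) combined with boundedness of $J(\omega_n) = \widehat{\Psi}(u_n)$), the scaling identity from~(b) upgrades the sandwich to two-sided estimates $|\widehat{\Psi}^\circ(u_n;v)| \leq C|J'(\omega_n)(v)| + o(\|v\|)$ and $|J'(\omega_n)(v)| \leq C|\widehat{\Psi}^\circ(u_n;v)| + o(\|v\|)$, uniformly in $\|v\| \leq 1$ with $C = C(\alpha_0, R)$. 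In ($\Rightarrow$), subgradients $\zeta_n \in \partial\widehat{\Psi}(u_n)$ with $\zeta_n \to 0$ combined with $\widehat{\Psi}^\circ(u_n;\pm v) \geq \pm \langle \zeta_n, v\rangle$ yield $\|J'(\omega_n)\|_{E^*} \to 0$; in ($\Leftarrow$), $J'(\omega_n) \to 0$ forces $\widehat{\Psi}^\circ(u_n; v) = o(\|v\|)$ uniformly, and any selection $\zeta_n \in \partial\widehat{\Psi}(u_n)$ (nonempty by Proposition 3.2) tends to zero.

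The main obstacle is part (i)($\Leftarrow$): ensuring that subsequential limits $t'_n \to t^*$ in the sandwich land in $m(u)$ (via closedness of $\mathcal{N}$) and invoking the pointwise scaling identity to remove the apparent dependence of $J'$ on which element of $m(u)$ is chosen. Once that analytic machinery is in place, part (ii) follows by a routine propagation of the uniform estimates along the sequence.
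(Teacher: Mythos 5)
Your proposal is correct and follows essentially the same route as the paper: the sandwich inequality $\widehat{\Psi}^\circ(u;v)\leq s J'(su)v$ obtained from the maximizing property of $m(u)$ and the mean value theorem, combined with the scaling identity $J'(s\omega)=sJ'(\omega)$ on the segment $m(u)$ coming from Lemma~3.1(iii). Your reorganization into the pointwise observations (a) and (b) and the explicit two-sided estimates in part (ii) merely spells out what the paper leaves implicit after establishing (3.8)--(3.9), so the underlying argument is the same.
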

\begin{proof}
(i) In fact, we need to show that for $u\in S$, $\widehat{\Psi}^{\circ}(u;v)\geq 0$ for any $v\in E$ if and only if $m(u)$ consists of critical points of $J$.
It is clear that $E=\r u\oplus T_{u}S$, and by the maximizing property of $m(u)$, $J'(\omega)v=0$ for all $\omega\in m(u)$ and $v\in \r u$. Fixed $s\in \r$, since  $\widehat{\Psi}$ is locally Lipschitz continuous and $\widehat{\Psi}(u)=\widehat{\Psi}(\sigma u)$ for all $\sigma>0$,  we have
\begin{align*}
\vert\widehat{\Psi}(u+h+t(su))-\widehat{\Psi}(u+h)\vert=\vert\widehat{\Psi}((1+ts)u+h)-\widehat{\Psi}((1+ts)(u+h))\vert\leq Ct\vert s\vert\Vert h\Vert
\end{align*}
for $t>0$ and $\Vert h\Vert$ small enough. Thus $\widehat{\Psi}^{\circ}(u;su)=0$ for all $s\in \r$.\\
Let $s_{u}>0$ such that  $s_{u}u\in m(u)$, by the maximizing property of $m(u)$ and the mean value theorem,
\begin{align*}
\frac{\widehat{\Psi}(u+h+tv)-\widehat{\Psi}(u+h)}{t}&=\frac{J(s_{u+h+tv}(u+h+tv))-J(s_{u+h}(u+h))}{t}\\
&\leq \frac{J(s_{u+h+tv}(u+h+tv))-J(s_{u+h+tv}(u+h))}{t}\\
&=s_{u+h+tv}J'(s_{u+h+tv}(u+h+\theta tv))v
\end{align*}
where $t>0$ and $\theta\in (0, 1)$.  Since $\mathcal{N}$ is bounded away from 0 and $J$ is coercive on $\mathcal{N}$,
thus $s_{u+h+tv}$ is bounded. Letting $h\rightarrow 0$ and $t\downarrow 0$ via subsequences, we have
\begin{align}
\widehat{\Psi}^{\circ}(u;v)\leq sJ'(su)v,
\end{align}
where $s_{n}:=s_{u+h_{n}+t_{n}v}\rightarrow s>0$. Moreover, since $\mathcal{N}$ is closed and
\begin{align*}
\widehat{\Psi}(u)&=\underset{n\rightarrow\infty}{\lim}\widehat{\Psi}(u+h_{n}+t_{n}v)=\underset{n\rightarrow\infty}{\lim}J(s_{n}(u+h_{n}+t_{n}v))\\
&=\underset{n\rightarrow\infty}{\lim}\Big(\frac{s_{n}^{2}}{2}\Vert u+h_{n}+t_{n}v\Vert^{2}-\irn F(x, s_{n}(u+h_{n}+t_{n}v))dx\Big)\\
&=\frac{s^{2}}{2}\Vert u\Vert^{2}-\irn F(x, su)dx,
\end{align*}
so $su\in \mathcal{N}$. From Lemma 3.1(iii), it is possible that $\r^{+}u\cap\mathcal{N}$ is a line segment, not a point, hence s may be different for different $v$. We set $s_{1}, s_{2}$ correspond to $v_{1}$ and $v_{2}$, by Lemma 3.1(iii), we have $s_{1}u=\tau (s_{2}u)$ and $J'(s_{1}u)v=\tau J'(s_{2}u)v$ for some $\tau>0$. From this
and (3.7), for $y\in\partial\Psi(u)$, one has
\begin{equation}
\langle y, v\rangle\leq \widehat{\Psi}^{\circ}(u;v)\leq\tau(v)J'(su)v,
\end{equation}
where $\tau$ is bounded and bounded away from 0(by constants independent of $v$). It follows that  $u$ is a critical point of $\widehat{\Psi}$ if and only if
$m(u)$ consists of critical points of $J$.\\

(ii)We take $y_{n}\in\partial\Psi(u_{n})$ and $\omega_{n}\in m(u_{n})$. Since $J$ is coercive on $\mathcal{N}$ and  $J(m(u_{n}))$ is bounded, the sequence
$\{m(u_{n})\}$ is bounded. As in (3.8), we have
\begin{equation}
\langle y_{n}, v\rangle\leq \widehat{\Psi}^{\circ}(u_{n};v)\leq\tau_{n}(v)J'(\omega_{n})v,
\end{equation}
where $\tau_{n}$ is bounded and bounded away from 0 because so is $m(u_{n})$. We complete the proof.
\end{proof}
\begin{remark} \label{cptsupp}
\emph{(1) Because of $\widehat{\Psi}^{\circ}(u; su)=0$ for all $s\in \r$, so $\partial\Psi(u)\subset T_{u}S$.}\\
\emph{(2) If $\omega_{n}\in m(u_{n})$ is a PS-sequence of $J$, then so is any sequence $\omega'_{n}\in m(u_{n})$.}\\
\end{remark}

The pseudo-gradient vector field $H:S\setminus K\rightarrow TS$ for $\Psi$ be very important. For $u\in S$, we define
\begin{equation}
\partial^{-}\Psi(u):=\{\gamma\in \partial\Psi(u): \Vert \gamma\Vert=\underset{a\in \partial\Psi(u)}{\min}\Vert a\Vert\}
\end{equation}
and
\begin{equation*}
\mu(u):=\underset{\beta\in S}{\inf}\{\Vert \partial^{-}\Psi(\beta)\Vert +\Vert u-\beta\Vert\}.
\end{equation*}
Because $\partial\Psi(u)$ is a closed and convex set, from \cite{rPKS}, it follows  that $\gamma$ in (2.1) exists and is unique, so we have
\begin{equation*}
K=\{u\in S: \partial^{-}\Psi(u)=0\}.
\end{equation*}
By \cite{rCh}, the map $u\mapsto \Vert \partial^{-}\Psi(u)\Vert$ is lower semicontinuous but not continuous in general. To regularize $\Vert \partial^{-}\Psi(u)\Vert$, the function $\mu$ be introduced.
\begin{lemma}
The function $\mu$ is continuous and $u\in K$ if and only if $\mu(u)=0$.
\end{lemma}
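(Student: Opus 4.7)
The plan is to handle the two claims separately, and both reduce to short manipulations of the infimum defining $\mu$.

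For continuity I would show that $\mu$ is in fact $1$-Lipschitz. For any $u_1,u_2\in S$ and any $\beta\in S$, the triangle inequality gives
\begin{equation*}
\|\partial^{-}\Psi(\beta)\|+\|u_1-\beta\|\leq \|\partial^{-}\Psi(\beta)\|+\|u_2-\beta\|+\|u_1-u_2\|.
\end{equation*}
Taking the infimum over $\beta\in S$ on the left and then on the right yields $\mu(u_1)\leq \mu(u_2)+\|u_1-u_2\|$, and by symmetry $|\mu(u_1)-\mu(u_2)|\leq \|u_1-u_2\|$. Continuity is immediate.

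For the equivalence, the forward direction is trivial: if $u\in K$ then $\partial^{-}\Psi(u)=0$, so choosing $\beta=u$ in the infimum gives $\mu(u)\leq \|\partial^{-}\Psi(u)\|+\|u-u\|=0$, and since $\mu\geq 0$ we obtain $\mu(u)=0$. For the converse, suppose $\mu(u)=0$. Then there exists a sequence $\{\beta_n\}\subset S$ with
\begin{equation*}
\|\partial^{-}\Psi(\beta_n)\|+\|u-\beta_n\|\longrightarrow 0,
\end{equation*}
so in particular $\beta_n\to u$ in $E$ and $\|\partial^{-}\Psi(\beta_n)\|\to 0$. Here I would invoke the lower semicontinuity of $u\mapsto \|\partial^{-}\Psi(u)\|$ (cited from \cite{rCh} in the text preceding the lemma) to conclude
\begin{equation*}
\|\partial^{-}\Psi(u)\|\leq \liminf_{n\to\infty}\|\partial^{-}\Psi(\beta_n)\|=0,
\end{equation*}
whence $\partial^{-}\Psi(u)=0$ and $u\in K$.

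The only subtle ingredient is the lower semicontinuity of $u\mapsto \|\partial^{-}\Psi(u)\|$, which is precisely the reason $\mu$ is introduced as a regularization in the first place, and this is already quoted from the literature in the paragraph immediately above the statement; everything else is a purely formal computation.
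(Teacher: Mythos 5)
Your proof is correct and follows essentially the same route as the paper: the $1$-Lipschitz estimate via the triangle inequality in the defining infimum, the forward implication by taking $\beta=u$, and the converse by extracting a minimizing sequence $\beta_n\to u$ with $\|\partial^{-}\Psi(\beta_n)\|\to 0$ and invoking lower semicontinuity of $u\mapsto\|\partial^{-}\Psi(u)\|$. No discrepancies.
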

\begin{proof}
Let $u, v, \beta\in S$, by the definition of $\mu$, we have
\begin{equation*}
\mu(u)\leq \Vert \partial^{-}\Psi(\beta)\Vert +\Vert u-\beta\Vert\leq \Vert \partial^{-}\Psi(\beta)\Vert +\Vert v-\beta\Vert+\Vert u-v\Vert.
\end{equation*}
So
\begin{align*}
\mu(u)&\leq \underset{\beta\in S}{\inf}\{\Vert \partial^{-}\Psi(\beta)\Vert +\Vert v-\beta\Vert\}+\Vert u-v\Vert\\
&=\mu(v)+\Vert u-v\Vert.
\end{align*}
Similarity, we have
\begin{align*}
\mu(v)-\mu(v)\leq\Vert u-v\Vert,
\end{align*}
Hence $\mu$ is Lipschitz continuous and is also continuous.\\
Since $0\leq \mu(u)\leq\Vert \partial^{-}\Psi(\beta)\Vert$, it is easy to see that $\mu(u)=0$ if $u\in K$. Now suppose $\mu(u)=0$. In virtue of the definition
of $\mu(u)$, there exist $\beta_{n}\subset S$ such that $\partial^{-}\Psi(\beta_{n})\rightarrow 0$ and $\beta_{n}\rightarrow u$. Moreover, by the map $u\mapsto \Vert \partial^{-}\Psi(u)\Vert$ is lower semicontinuous, so $u\in K$.

\end{proof}
\begin{proposition}
There exists a locally Lipschitz continuous vector field $H:S\setminus K\rightarrow TS$ with $\Vert H(u)\Vert\leq 1$ and $\inf\{\langle \gamma, H(u)\rangle: \gamma\in \partial\Psi(u)\}>\frac{1}{2}\mu(u)$ for all $u\in S\setminus K$. If $J$ is even, then $H$ may be chosen to be odd.
\end{proposition}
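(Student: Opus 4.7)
My plan follows the standard non-smooth pseudo-gradient construction adapted from \cite{rPKS}: at each $u_0 \in S\setminus K$ produce a unit ``pseudo-gradient'' direction $v_{u_0}$ together with an open neighborhood on which $\langle \gamma, v_{u_0}\rangle > \tfrac{1}{2}\mu(u)$ holds for all $\gamma \in \partial\Psi(u)$, then glue these local directions by means of a locally Lipschitz partition of unity on the metric space $S\setminus K$.

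\emph{Local step.} Fix $u_0 \in S \setminus K$. Taking $\beta=u_0$ in the infimum defining $\mu(u_0)$ yields $\Vert \partial^{-}\Psi(u_0)\Vert \geq \mu(u_0) > 0$, so $g_0 := \partial^{-}\Psi(u_0)$ is nonzero and I set $v_{u_0} := g_0/\Vert g_0\Vert$. Because $g_0$ is the orthogonal projection of the origin onto the closed convex subset $\partial\Psi(u_0)\subset E$, the variational inequality gives $\langle \gamma, g_0\rangle \geq \Vert g_0\Vert^2$, hence $\langle \gamma, v_{u_0}\rangle \geq \Vert g_0\Vert \geq \mu(u_0)$ for every $\gamma \in \partial\Psi(u_0)$. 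The open half-space $A := \{\gamma \in E : \langle \gamma, v_{u_0}\rangle > \tfrac{3}{4}\mu(u_0)\}$ is weakly open and contains $\partial\Psi(u_0)$. By weak upper semi-continuity of the Clarke subdifferential of the locally Lipschitz map $\widehat{\Psi}$ (Proposition 3.2), there is a neighborhood $U_{u_0}$ of $u_0$ in $S\setminus K$ with $\partial\Psi(u)\subset A$ for all $u\in U_{u_0}$; shrinking $U_{u_0}$ using the continuity of $\mu$ (Lemma 3.4) so that $\mu(u) < \tfrac{3}{2}\mu(u_0)$ on $U_{u_0}$, I conclude
\[\langle \gamma, v_{u_0}\rangle > \tfrac{3}{4}\mu(u_0) > \tfrac{1}{2}\mu(u) \qquad \forall u \in U_{u_0},\ \forall \gamma \in \partial\Psi(u).\]

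\emph{Gluing and equivariance.} Since $S\setminus K$ is paracompact, the cover $\{U_{u_0}\}$ admits a locally finite refinement $\{V_j\}_{j\in J}$ and a subordinate locally Lipschitz partition of unity $\{\phi_j\}$; for each $j$ I pick $u_0(j)$ with $V_j \subset U_{u_0(j)}$ and set $v_j := v_{u_0(j)}$. Define
\[\widetilde H(u) := \sum_{j\in J}\phi_j(u)\,v_j, \qquad H(u) := \widetilde H(u) - \langle \widetilde H(u), u\rangle\,u.\]
Then $H(u) \in T_uS$, $H$ is locally Lipschitz (local finiteness plus Lipschitzness of the $\phi_j$), and since orthogonal projection onto $T_uS$ is non-expansive, $\Vert H(u)\Vert \leq \Vert \widetilde H(u)\Vert \leq \sum_j\phi_j(u)=1$. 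By Remark 3.4(1), $\partial\Psi(u)\subset T_uS$, so $\langle \gamma, u\rangle = 0$ for every $\gamma \in \partial\Psi(u)$ and
\[\langle \gamma, H(u)\rangle = \langle \gamma, \widetilde H(u)\rangle = \sum_j\phi_j(u)\langle \gamma, v_j\rangle > \tfrac{1}{2}\mu(u).\]
When $J$ is even, $\widehat\Psi$, $K$, and $\mu$ are antipodally invariant and $\partial\Psi(-u)=-\partial\Psi(u)$ by the Clarke chain rule, so $v_{-u_0} = -v_{u_0}$; choosing the refinement and partition of unity $\mathbb{Z}/2$-symmetrically (pair $V_j\leftrightarrow -V_j$ with $v_j\leftrightarrow -v_j$ and antipodally average each $\phi_j$) yields $H(-u) = -H(u)$.

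\emph{Main obstacle.} The only delicate step is the local transfer, where the three quantitative ingredients — the projection inequality $\langle \gamma, v_{u_0}\rangle \geq \mu(u_0)$ on $\partial\Psi(u_0)$, the weak upper semi-continuity of Clarke's subdifferential, and the continuity of $\mu$ — have to be tracked simultaneously to yield the uniform pointwise lower bound $\tfrac{1}{2}\mu(u)$ (depending on $u$) rather than only the fixed constant $\tfrac{1}{2}\mu(u_0)$. Once this local estimate is in hand, the partition-of-unity gluing and the $\mathbb{Z}/2$-equivariance adjustment are standard.
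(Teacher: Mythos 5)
Your proof is correct and follows the standard non-smooth pseudo-gradient construction (local direction from the minimal-norm element of the Clarke subdifferential, stability under a weak-$*$ upper semicontinuity argument, locally Lipschitz partition of unity, projection onto the tangent space, antipodal symmetrization). The paper itself gives no argument here, deferring entirely to Proposition 2.10 of \cite{rPKS}, and what you have written is precisely the expected content of that reference; so the approach matches. One small inaccuracy: the weak upper semicontinuity of the Clarke subdifferential (closed graph in the norm $\times$ weak-$*$ topology plus local boundedness) is not stated as "Proposition 3.2" of this paper — that proposition only asserts that $\widehat{\Psi}$ is locally Lipschitz — and should instead be cited from the general theory in \cite{rC} or \cite{rCh}; likewise the internal references to "Remark 3.4(1)" and "Lemma 3.4" correspond to Remark 3.5(1) and Lemma 3.6 in the paper's numbering, but none of this affects the mathematics.
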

This follows by an easy inspection of the proof of Proposition 2.10 in \cite{rPKS}.

\begin{proof}[\textbf{Proof of Theorem 1.1}]
Since $c:=\underset{u\in S}{\inf}\Psi(u)=\underset{\omega\in \mathcal{N}}{\inf}J(\omega)>0$ from Lemma 3.1(iv). By Ekeland's variational principle, there is a sequence $\{u_{n}\}\subset S$ such that $\Psi(u_{n})\rightarrow c$ and
\begin{equation}
\Psi(v)\geq \Psi(u_{n})-\frac{1}{n}\Vert v-u_{n}\Vert\quad \text{for all}\, v\in S.
\end{equation}
For a given $v\in T_{u_{n}}S$, let $z_{n}(t)=\frac{u_{n}+tv}{\Vert u_{n}+tv\Vert}$. It is clear that $\Vert u_{n}+tv\Vert-1=O(t^{2})$ as $t\rightarrow 0$
and $\widehat{\Psi}(u_{n}+tv)=\Psi(z_{n}(t))$. From (3.11), we have
\begin{align*}
\widehat{\Psi}^{\circ}(u_{n};v)\geq \underset{t\downarrow 0}{\lim\sup}\frac{\widehat{\Psi}(u_{n}+tv)-\widehat{\Psi}(u_{n})}{t}=\underset{t\downarrow 0}{\lim\sup}\frac{\Psi(z_{n}(t))-\Psi(u_{n})}{t}\geq -\frac{1}{n}\Vert v\Vert.
\end{align*}
Since $J$ is coercive on $\mathcal{N}$,  $\{m(u_{n})\}$ is bounded. Moreover, by (3.9), one has
\begin{align*}
-\frac{1}{n}\Vert v\Vert\leq \widehat{\Psi}^{\circ}(u;v)\leq \tau_{n}(v)J'(\omega_{n})v,
\end{align*}
where $\omega_{n}\in m(u_{n})\subset \mathcal{N}$ and $\tau_{n}$ is bounded and bounded away from 0. Since for any $v\in \r\omega_{n}$, $J'(\omega_{n})v=0$,
 $\{\omega_{n}\}$ is a bounded PS-sequence of $J$. Up to a subsequence,  $\omega_{n}\rightharpoonup u$, $\omega_{n}\rightarrow u$ in $L_{loc}^{2}(\rn)$
 and $\omega_{n}\rightarrow u$ in a.e. $x\in \rn$.
 If $\omega_{n}\rightarrow 0$ in $L^{p}(\rn)$, then $\irn F(x, \omega_{n})dx\rightarrow 0$ and $\irn f(x, \omega_{n})\omega_{n}dx\rightarrow 0$ as $n\rightarrow\infty$, this implies that $\Vert \omega_{n}\Vert\rightarrow 0$ as $n\rightarrow\infty$ and this contradicts with Lemma 3.1(ii), so  $\omega_{n}\not\rightarrow 0$ in $L^{p}(\rn)$, by Proposition 2.2, for some $R>0$ and $\delta>0$, there exist $\{y_{n}\}$ such that
 \begin{align*}
\int_{B_{R}(y_{n})}\omega_{n}^{2}dx\geq \delta.
\end{align*}
 Since $J$ and $\mathcal{N}$ are invariant under translations of the form $\omega\mapsto \omega(\cdot-k)$ with $k\in Z^{N}$, we may assume that
 $\{y_{n}\}$ is bounded in $\rn$. By Fatou's lemma, we know that $u\neq 0$. Now we show that $u$ is a ground state solution. By (3.2) and Fatou's lemma
 \begin{align*}
c:=\underset{n\rightarrow \infty}{\lim}J(\omega_{n})&=\underset{n\rightarrow \infty}{\lim}\Big(J(\omega_{n})-\frac{1}{2}J'(\omega_{n})\omega_{n}\Big)\\
&=\underset{n\rightarrow \infty}{\lim}\irn \Big(\frac{1}{2}f(x, \omega_{n})\omega_{n}-F(x, \omega_{n})\Big)dx\\
&\geq \irn \Big(\frac{1}{2}f(x, u)u-F(x, u)\Big)dx\\
&=J(u)-\frac{1}{2}J'(u)u=J(u)\geq c.
\end{align*}
The proof is completed.
\end{proof}
\begin{remark} \label{cptsupp}
\emph{
Under the assumptions of Theorem 1.1, if $f(x, u)\geq 0$, $u\geq0$, $f(x, u)=0$, $u\leq 0$, then we may obtain a nonnegative ground state solution. Put $u^{+}:=\max\{u, 0\}$. Noting that the conclusion of Theorem 1.1 holds for the functional}
\begin{equation*} \label{fcl}
J^{+}(u):= \frac{1}{2}\irn\Big(\vert  (-\Delta)^{\frac{\alpha}{2}} u\vert^{2}+V(x)u^{2}\Big)dx-\irn F(x, u^{+})\,dx.
\end{equation*}
\emph{
So we get a ground state solution $u$ of the equation}
\begin{equation*}  \label{1}
(-\Delta)^{\alpha} u+V(x)u =f(x, u^{+}), \quad  x\in \rn.
\end{equation*}
\emph{Using $u^{-}:=\min\{u, 0\}$ as a test function in above equation, and integrating by parts, we obtain}
\begin{equation*} \label{fcl}
\irn (-\Delta)^{\alpha} u\cdot u^{-}dx=-\irn V(x)(u^{-})^{2}dx\leq 0.
\end{equation*}
\emph{But we know that}
\begin{align*}
\irn (-\Delta)^{\alpha} u\cdot u^{-}dx&=\iint_{\rn\times\rn}\frac{(u(x)-u(y))(u^{-}(x)-u^{-}(y))}{\vert x-y\vert^{N+2\alpha}}dxdy\\
&\geq \iint_{\{u>0\}\times \{u<0\}}\frac{(u(x)-u(y))(-u^{-}(y))}{\vert x-y\vert^{N+2\alpha}}dxdy\\
& \quad +\iint_{\{u<0\}\times \{u<0\}}\frac{(u^{-}(x)-u^{-}(y))^{2}}{\vert x-y\vert^{N+2\alpha}}dxdy\\
&\quad + \iint_{\{u<0\}\times \{u>0\}}\frac{(u(x)-u(y))u^{-}(x)}{\vert x-y\vert^{N+2\alpha}}dxdy\geq 0.
\end{align*}
\emph{Thus $u^{-}=0$ and $u\geq 0$ is a ground state solution of problem (1.1).}
\end{remark}

Now we assume that $f(x, u)$ is odd in $u$. To prove the existence of infinitely many geometrically distinct solutions, we assume the contrary. Since for each
$[s_{\omega}, t_{\omega}]\omega\subset\mathcal{N}$ there corresponds a unique point $u\in S$. Assume that $\mathcal{F}$ is a finite set and choose a subset $\mathcal{F}$ of $K$ such that $-\mathcal{F}=\mathcal{F}$ and each orbit $\mathcal{O}(\omega)$ has a unique representative in $\mathcal{F}$.
\begin{lemma}
\textit{The mapping} $m^{-1}: \mathcal{N}\rightarrow S$ \textit{is Lipschitz continuous.}
\end{lemma}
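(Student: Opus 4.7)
The plan is to identify $m^{-1}$ explicitly and then do a direct estimate using the lower bound on the Nehari manifold. By definition of $m$, every $\omega \in \mathcal{N}$ lies on exactly one ray $\r^{+}u$ with $u \in S$, so
\begin{equation*}
m^{-1}(\omega) = \frac{\omega}{\Vert \omega\Vert}.
\end{equation*}
Thus the claim reduces to showing that the normalization map $\omega \mapsto \omega/\Vert \omega\Vert$ is Lipschitz on $\mathcal{N}$.

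The key ingredient is Lemma 3.1(ii), which provides $\alpha_{0} > 0$ with $\Vert \omega\Vert \geq \alpha_{0}$ for every $\omega \in \mathcal{N}$; this keeps the denominator uniformly away from zero. For $\omega_{1}, \omega_{2} \in \mathcal{N}$ I would use the standard add-and-subtract trick
\begin{equation*}
\frac{\omega_{1}}{\Vert \omega_{1}\Vert} - \frac{\omega_{2}}{\Vert \omega_{2}\Vert} = \frac{\omega_{1} - \omega_{2}}{\Vert \omega_{2}\Vert} + \frac{\omega_{1}}{\Vert \omega_{1}\Vert}\cdot \frac{\Vert \omega_{2}\Vert - \Vert \omega_{1}\Vert}{\Vert \omega_{2}\Vert},
\end{equation*}
and then apply the reverse triangle inequality $\bigl|\Vert \omega_{2}\Vert - \Vert \omega_{1}\Vert\bigr| \leq \Vert \omega_{1} - \omega_{2}\Vert$ together with $\Vert \omega_{1}/\Vert \omega_{1}\Vert\Vert = 1$ to conclude
\begin{equation*}
\bigl\Vert m^{-1}(\omega_{1}) - m^{-1}(\omega_{2})\bigr\Vert \leq \frac{2}{\Vert \omega_{2}\Vert}\Vert \omega_{1} - \omega_{2}\Vert \leq \frac{2}{\alpha_{0}}\Vert \omega_{1} - \omega_{2}\Vert.
\end{equation*}

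There is essentially no obstacle here beyond invoking the correct lemma: the whole content is the uniform lower bound on $\Vert \omega\Vert$ on $\mathcal{N}$. Note that $m$ itself is only multi-valued (because of the possible line-segment structure from Lemma 3.1(iii)), but that multi-valuedness is precisely what disappears after normalization, so the inverse map is well-defined and the single-valued Lipschitz estimate above applies without any modification.
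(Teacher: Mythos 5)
Your identification $m^{-1}(\omega)=\omega/\Vert\omega\Vert$ and the subsequent estimate are exactly the argument of Lemma 2.11 in Szulkin--Weth, which is what the paper cites in lieu of a written proof (here $E^{-}=\{0\}$, so $\omega^{+}=\omega$). The algebra checks out, and Lemma 3.1(ii) supplies the uniform lower bound $\Vert\omega\Vert\geq\alpha_{0}$ that makes the constant $2/\alpha_{0}$ work, so the proposal is correct and follows essentially the same route as the paper.
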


\begin{lemma}
$\kappa:=\inf\{\Vert v-w\Vert: v, w\in K, v\neq w\}>0$\textit{.}
\end{lemma}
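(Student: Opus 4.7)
The plan is to argue by contradiction: suppose $v_n, w_n \in K$ with $v_n \neq w_n$ and $\Vert v_n - w_n\Vert \to 0$, and derive a contradiction from the finiteness of $\mathcal{F}$ combined with the $\mathbb{Z}^N$-invariance of $J$ and the norm.

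The key preliminary is a clean parameterization of $K$. By Proposition 3.3(i), $u \in K$ iff $m(u) = [s_u, t_u]u$ consists of critical points of $J$, so $K = \{\omega/\Vert \omega\Vert : \omega \text{ is a nonzero critical point of } J\}$. Since normalization collapses each segment of critical points on a common ray to a single point of $K$, the $\mathbb{Z}^N$-orbits in $K$ are in bijection with the geometric equivalence classes. Thus, under the contradiction assumption that $\mathcal{F}$ is finite, every element of $K$ admits a unique representation $u = k * u^*$ with $u^* \in \mathcal{F}$ and $k \in \mathbb{Z}^N$. Write $v_n = k_n * u_n^*$ and $w_n = l_n * z_n^*$; passing to a subsequence using finiteness of $\mathcal{F}$, I may fix $u_n^* = u^*$ and $z_n^* = z^*$. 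Since the norm on $E$ is invariant under $\mathbb{Z}^N$-translations (the Gagliardo seminorm is translation invariant and $V$ is $1$-periodic), applying $-k_n$ yields $\Vert u^* - m_n * z^*\Vert \to 0$ with $m_n := l_n - k_n \in \mathbb{Z}^N$.

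The argument then splits into two cases. \textbf{Case 1:} $\{m_n\}$ is bounded. Extract a constant subsequence $m_n \equiv m$, so $u^* = m * z^*$. If $u^* = z^*$, then either $m = 0$, forcing $v_n = w_n$ (contradiction), or $m \neq 0$, which would make $u^* \in L^2(\rn)$ a nontrivial periodic function, hence zero, a contradiction. If $u^* \neq z^*$, then $\mathcal{O}(u^*) = \mathcal{O}(z^*)$, contradicting that $\mathcal{F}$ contains exactly one representative per orbit. \textbf{Case 2:} $|m_n| \to \infty$. Since $\Vert m_n * z^*\Vert = \Vert z^*\Vert$ is bounded and $m_n * z^*$ moves its mass off to infinity, testing against $C_c^\infty(\rn)$ gives $m_n * z^* \rightharpoonup 0$ weakly in $E$. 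Therefore $u^* - m_n * z^* \rightharpoonup u^* \neq 0$, contradicting strong convergence to $0$.

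The main obstacle is conceptual rather than computational: one must correctly identify that the finiteness of $\mathcal{F}$ (viewed as orbit representatives inside $K$) captures the finiteness hypothesis on geometrically distinct solutions, because the segment structure $m(u) = [s_u, t_u]u$ is absorbed into a single point of $S$ after normalization. Once this parameterization is in place, the proof reduces to the standard dichotomy (bounded translations versus escape to infinity) for translates of a fixed $H^\alpha$-function, with periodicity of $V$ ensuring that the $E$-norm behaves well under $\mathbb{Z}^N$-translations.
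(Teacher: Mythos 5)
Your proof is correct and reconstructs the argument the paper omits (it refers to Lemma 2.13 of Szulkin--Weth for the proof), using the same translation-dichotomy mechanism: bounded shifts force equality of orbit representatives, unbounded shifts force weak convergence to zero and hence a nonzero norm gap. You also correctly handle the one genuinely new wrinkle in this setting, namely that normalization to $S$ collapses the line segments $m(u)$ to single points, so the finiteness of $\mathcal{F}$ is precisely the right substitute for the single-valued parameterization used in \cite{rSW}.
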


The proofs of the above two lemmas are similar with Lemma 2.11 and Lemma 2.13 in \cite{rSW}, so we omit it here.

\begin{lemma}[{\cite{rSW}}]
Let $d\geq c$. If $\{(v_{n}^{1}\}$, $\{v_{n}^{2}\}\subset \Psi^{d}$ are two Palais-Smale sequences for
$\Psi$, then either $\Vert v_{n}^{1}-v_{n}^{2}\Vert\rightarrow 0$ as $n\rightarrow \infty$ or $\lim\sup_{n\rightarrow \infty}\Vert v_{n}^{1}-v_{n}^{2}\Vert\geq \rho(d)>0$, where $\rho(d)$ depends on $d$ but not on the particular choice of PS-sequences in $\Psi^{d}$.
\end{lemma}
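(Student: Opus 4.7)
The plan is to adapt Lemma~2.13 of \cite{rSW} to the nonlocal setting by a contradiction/concentration argument, exploiting three ingredients already in the excerpt: Proposition~3.3(ii), which lifts any PS-sequence for $\Psi$ on $S$ to a PS-sequence for $J$ on $\mathcal{N}$; Lemma~3.4, giving Lipschitz continuity of $m^{-1}:\mathcal{N}\to S$; and Lemma~3.5, which supplies the uniform gap $\kappa>0$ for $K$.

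First I set $\omega_n^i:=m(v_n^i)\in\mathcal{N}$ for $i=1,2$. By Proposition~3.3(ii) each $\{\omega_n^i\}$ is a PS-sequence for $J$, and $J(\omega_n^i)=\Psi(v_n^i)\le d$ combined with Lemma~3.1(vi) forces boundedness in $E$. Assume the conclusion fails: then for arbitrarily small $\rho>0$ one finds PS-sequences $\{v_n^1\},\{v_n^2\}\subset\Psi^d$ with $\|v_n^1-v_n^2\|\not\to 0$ yet $\limsup_n\|v_n^1-v_n^2\|<\rho$; Lemma~3.4 transfers this dichotomy, along a subsequence, to $\{\omega_n^i\}$, so that $\|\omega_n^1-\omega_n^2\|$ stays bounded below by some $\delta>0$ while the $\limsup$ remains arbitrarily small.

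Next I apply the concentration step used in the proof of Theorem~1.1 to each $\{\omega_n^i\}$. Since $\|\omega_n^i\|\ge \alpha_0$ by Lemma~3.1(ii), vanishing in $L^p$ is excluded, so Proposition~2.2 provides $y_n^i\in\mathbb{Z}^N$ with $\int_{B_R(y_n^i)}(\omega_n^i)^2\,dx\ge\delta_0>0$. Periodicity of $V,f$ makes $J$ invariant under the $\mathbb{Z}^N$-action~(1.3), so the translates $y_n^i*\omega_n^i$ are again bounded PS-sequences and, up to subsequence, $y_n^i*\omega_n^i\rightharpoonup\omega^i\neq 0$, each $\omega^i$ being a nontrivial critical point of $J$ by the usual weak-convergence / Fatou argument.

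The crux is the analysis of the relative shift $k_n:=y_n^1-y_n^2\in\mathbb{Z}^N$. If $|k_n|\to\infty$, the two translated profiles decouple in $E$: the $L^2$ cross term vanishes by asymptotic orthogonality, and the Gagliardo cross term $\iint\frac{(\omega_n^1(x)-\omega_n^1(y))(\omega_n^2(x)-\omega_n^2(y))}{|x-y|^{N+2\alpha}}\,dx\,dy$ vanishes as well (by Parseval plus Riemann--Lebesgue applied to $|\xi|^{2\alpha}\widehat{\omega^1}\overline{\widehat{\omega^2}}\,e^{i\xi\cdot k_n}$), so $\liminf_n\|\omega_n^1-\omega_n^2\|\ge\|\omega^1\|\ge\alpha_0$, which via Lemma~3.4 contradicts smallness of $\rho$. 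If $|k_n|$ stays bounded, we may assume $k_n\equiv k$ and both translated sequences converge weakly to critical points $\omega^1,\omega^2\in\mathcal{N}$; setting $v^i:=m^{-1}(\omega^i)\in K$, either $v^1\neq v^2$ (and Lemma~3.5 gives $\|v^1-v^2\|\ge\kappa$, hence $\|v_n^1-v_n^2\|\ge\kappa/2$ for large $n$, contradicting $\rho<\kappa/2$), or $\omega^1=\omega^2$, in which case iterating the concentration step on the difference $\omega_n^1-\omega_n^2$ produces either strong convergence to $0$ (forcing $\|v_n^1-v_n^2\|\to 0$ via Lemma~3.4, a contradiction) or a second bubble of mass $\ge\alpha_0$, again contradicting the assumed smallness. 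Any $\rho(d)<\min\{\kappa/2,\,\alpha_0/(2L)\}$, with $L$ the Lipschitz constant of $m^{-1}$ on $\{J\le d\}$, then works. The main obstacle is the nonlocal decoupling: one must control the Gagliardo cross term between profiles whose concentration sets drift apart by $|k_n|\to\infty$, but this is handled cleanly by Fourier methods together with the $\mathbb{Z}^N$-invariance of $J$ granted by the periodicity hypotheses.
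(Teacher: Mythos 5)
The paper provides no proof here: the lemma is quoted from \cite{rSW} (where $m$ is single-valued, so the argument transfers under (F$_4$) via the multi-valued $m$ and Remark~3.5(2)). Your reconstruction, however, deviates from the Szulkin--Weth argument in ways that introduce genuine gaps.

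First, the transfer step is not valid. The Lipschitz continuity of $m^{-1}$ gives $\Vert v_n^1-v_n^2\Vert\le L\Vert \omega_n^1-\omega_n^2\Vert$, so it yields exactly one of your two assertions: $\Vert v_n^1-v_n^2\Vert\not\to 0$ does force $\Vert\omega_n^1-\omega_n^2\Vert\not\to 0$. It does \emph{not} give the other: $\limsup\Vert v_n^1-v_n^2\Vert<\rho$ in no way bounds $\limsup\Vert\omega_n^1-\omega_n^2\Vert$, because $m$ is not single-valued (let alone Lipschitz); one can have $v_n^1=v_n^2$ and still $\omega_n^1,\omega_n^2\in m(v_n^1)$ at opposite ends of the segment $[s_\omega,t_\omega]\omega$. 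Your whole contradiction hinges on having both bounds on $\{\omega_n^i\}$, so this step must be replaced.

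Second, the central case distinction in the Szulkin--Weth proof is missing. The argument there is \emph{not} to concentrate $\omega_n^1$ and $\omega_n^2$ separately and then analyse the relative shift $k_n$; it is to split on whether the \emph{difference} $\Vert \omega_n^1-\omega_n^2\Vert_p\to 0$. In the vanishing case, one pairs $\langle J'(\omega_n^1)-J'(\omega_n^2),\,\omega_n^1-\omega_n^2\rangle\to 0$ against the growth estimate (3.1) to conclude $\Vert\omega_n^1-\omega_n^2\Vert\to 0$ and hence, via the Lipschitz continuity of $m^{-1}$, $\Vert v_n^1-v_n^2\Vert\to 0$ — this is precisely the ``either'' branch of the statement and is absent from your plan. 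In the non-vanishing case Lions' lemma applies to $\omega_n^1-\omega_n^2$ directly, producing a single translation sequence $y_n$ and a nonzero weak limit of the translated difference. Your variant, concentrating each profile on its own $y_n^i$, loses the link to the difference: $\langle\omega_n^1,\omega_n^2\rangle\to 0$ does not follow merely because $|y_n^1-y_n^2|\to\infty$, since each $\omega_n^i$ may carry mass far from $y_n^i$. The Fourier/Riemann--Lebesgue observation for the Gagliardo cross term is correct for fixed profiles drifting apart, but you would first need a full profile decomposition to reduce to that situation; as written the decoupling is asserted, not proved.

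Finally, the sub-case $\omega^1=\omega^2$ handled by ``iterating the concentration step on the difference'' is not a well-defined procedure, and in the weak-monotonicity setting you must also cope with $\omega^1=s\omega^2$, $s\ne 1$, for which $m^{-1}(\omega^1)=m^{-1}(\omega^2)$ and Lemma~3.10 gives nothing. The correct route is the two-case split on $\Vert\omega_n^1-\omega_n^2\Vert_p$ above, with the lower bound $\rho(d)$ extracted from $\alpha_0$ and the uniform bound $\Vert\omega_n^i\Vert\le M(d)$ coming from coercivity of $J$ on $\mathcal N$.
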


Let $H$ be the pseudo-gradient vector field in Proposition and  $\eta: \rightarrow S\setminus K$ be the flow defined by
\begin{align}
 \left\{
\begin{aligned}
&\frac{d}{dt}\eta(t, w)=-H(\eta(t, w)),\\
&\eta(0, w)=w,
\end{aligned}
\right.
\end{align}
where $\mathcal{G}:=\{(t, w): w\in S\setminus K, T^{-}(w)<t<T^{+}(w)\}$ and $(T^{-}(w), T^{+}(w))$ is the maximal existence time for the trajectory $t\mapsto \eta(t, w)$ which passing through $\omega$ at $t=0$.
Note that $\eta$ is odd in $w$ because $H$ is and $t\mapsto \Psi(\eta(t, w))$ is strictly decreasing by the properties of a pseudogradient.
\begin{lemma}
For each $\omega\in S\setminus K$, $\lim_{t\rightarrow T^{+}(\omega)}\eta(t, \omega)$ exists and is a critical point of $J$.
\end{lemma}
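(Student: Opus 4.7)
The plan is to combine the energy decay of the gradient flow with the discreteness results (the gap Lemma giving $\kappa>0$ and the PS-clustering Lemma) to force convergence. Set $\phi(t):=\Psi(\eta(t,w))$. Along the orbit, the pseudo-gradient Proposition gives $\phi'(t)<-\tfrac{1}{2}\mu(\eta(t,w))$, so $\phi$ strictly decreases; since $\phi\geq c$, the limit $d:=\lim_{t\to T^+(w)}\phi(t)\geq c$ exists, and integration yields
\begin{equation*}
\int_0^{T^+(w)}\mu(\eta(s,w))\,ds\leq 2\bigl(\phi(0)-d\bigr)<\infty.
\end{equation*}
If $T^+(w)<\infty$, then the bound $\|H\|\leq 1$ makes the trajectory $1$-Lipschitz, so it extends continuously to $T^+(w)$; the limit $u^*$ lies in $S$, and maximality of $T^+$ forces $u^*\in K$ (otherwise $H$ would be defined at $u^*$ and the flow extendable).

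Assume now $T^+(w)=\infty$. The integrability gives $t_n\to\infty$ with $\mu(\eta(t_n,w))\to 0$. By the definition of $\mu$, select $\beta_n\in S$ with $\|\beta_n-\eta(t_n,w)\|\to 0$ and $\|\partial^-\Psi(\beta_n)\|\to 0$; continuity of $\Psi$ gives $\Psi(\beta_n)\to d$, so $\{\beta_n\}$ is a Palais--Smale sequence for $\Psi$ at level $d$. By the correspondence Proposition, any $\omega_n\in m(\beta_n)$ produces a PS sequence for $J$, bounded by Lemma 3.1(vi). The concentration-compactness argument used in the proof of Theorem 1.1, combined with the standing finite-orbit assumption $|\mathcal{F}|<\infty$, yields translations $k_n\in\mathbb{Z}^N$ such that $k_n*\omega_n\to\omega^*\in\mathcal{N}$ strongly with $\omega^*$ a critical point of $J$; the Lipschitz continuity of $m^{-1}$ then forces $k_n*\beta_n\to v^*:=m^{-1}(\omega^*)\in K$, hence $k_n*\eta(t_n,w)\to v^*$. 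Using the $\mathbb{Z}^N$-equivariance of the flow and replacing $w$ by a translate if necessary, the $k_n$ stabilize, and we obtain $\eta(t_n,w)\to u^*\in K$ along a subsequence.

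To upgrade to the full limit, suppose for contradiction that $\eta(t,w)\not\to u^*$. Fix $\epsilon\in(0,\kappa/2)$ and extract $t'_n\to T^+$ with $\|\eta(t'_n,w)-u^*\|>\epsilon$; by continuity of the flow one finds disjoint intervals $[s_n^1,s_n^2]$ during which $\eta(\cdot,w)$ stays in the annulus $A:=\{u\in S:\epsilon/2\leq\|u-u^*\|\leq\epsilon\}$ with $\|\eta(s_n^1,w)-u^*\|=\epsilon/2$ and $\|\eta(s_n^2,w)-u^*\|=\epsilon$. The key claim is
\begin{equation*}
\delta:=\inf\bigl\{\mu(u):u\in A,\ d\leq\Psi(u)\leq\phi(0)\bigr\}>0.
\end{equation*}
If $\delta=0$, running the previous extraction on a minimizing sequence in $A$ produces a limit $v\in K$ with $\|v-u^*\|\in[\epsilon/2,\epsilon]$; since $v\neq u^*$, the gap Lemma forces $\|v-u^*\|\geq\kappa>\epsilon$, a contradiction. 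With this uniform bound each crossing contributes
\begin{equation*}
\phi(s_n^1)-\phi(s_n^2)\geq\tfrac{\delta}{2}(s_n^2-s_n^1)\geq\tfrac{\delta}{2}\|\eta(s_n^2,w)-\eta(s_n^1,w)\|\geq\tfrac{\delta\epsilon}{4},
\end{equation*}
using $\|H\|\leq 1$ and the triangle inequality. Summing over infinitely many disjoint intervals contradicts $\phi\geq d$. Finally, $u^*\in K$ corresponds via $m$ to critical points of $J$ by the correspondence Proposition, which is the sense in which the limit is a critical point of $J$.

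The hard part will be the $\mathbb{Z}^N$-translation bookkeeping in the extraction step and the attendant uniform lower bound on $\mu$ over the annulus: since $A$ is not relatively compact in $E$, that bound genuinely depends on both the gap Lemma and the finite-orbit assumption, and the proof must carefully exploit the equivariance of $\Psi$, $\mu$, and $H$ under the $\mathbb{Z}^N$-action.
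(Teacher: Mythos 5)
Your overall architecture differs from the paper's. The paper proves Cauchy-ness directly: assuming the orbit is not Cauchy, it chooses $\epsilon\in(0,\rho(d)/2)$ and times $t_n$ with $\|\eta(t_n,\omega)-\eta(t_{n+1},\omega)\|=\epsilon$, then uses the energy-decay identity to manufacture two Palais--Smale sequences $\{\omega_n^1\}$, $\{\omega_n^2\}$ with $\frac{\epsilon}{3}\leq\limsup\|\omega_n^1-\omega_n^2\|\leq 2\epsilon<\rho(d)$, contradicting the dichotomy lemma (Lemma~3.11). You instead try to extract a genuine strong limit $u^*$ along a subsequence and then run an annulus argument around $u^*$ powered by the gap constant $\kappa$. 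That is a legitimate alternative template in principle, but it is not what the paper does, and in this periodic setting it runs into the exact difficulty the paper's argument is designed to avoid.

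The genuine gap is in the sentence ``Using the $\mathbb{Z}^N$-equivariance of the flow and replacing $w$ by a translate if necessary, the $k_n$ stabilize.'' There is no justification offered, and none is readily available: $\{\eta(t_n,w)\}$ lies only in the non-compact sphere $S$, the gaps $t_{n+1}-t_n$ may be unbounded, and a priori the trajectory could drift indefinitely, so that $k_n*\eta(t_n,w)\to v^*$ holds with $|k_n|\to\infty$ and $\eta(t_n,w)\rightharpoonup 0$. In that case no fixed $u^*$ exists, the annulus $A$ cannot be anchored, and the rest of your argument never starts. Replacing $w$ by a translate reparametrizes the whole orbit by a single fixed $k\in\mathbb{Z}^N$ and cannot make an unbounded sequence $(k_n)$ eventually constant. (By contrast, your positivity claim $\delta>0$ on the annulus, while also stated too quickly, \emph{can} be repaired precisely because the annulus is a bounded neighborhood of the fixed center $u^*$ and a critical point at distance $\leq\epsilon<\kappa$ from $u^*$ must coincide with $u^*$; no such anchor exists in the subsequential-convergence step.) The paper's dichotomy-lemma argument gets around this by never asking for strong convergence of translates at all: it directly forbids the intermediate distance range, so only Cauchy-ness survives. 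As written, your proof does not close, and the missing step is not a bookkeeping detail but the crux of the lemma.
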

\begin{proof}
If $T^{+}(\omega)<\infty$ and let $0\leq s<t<T^{+}(\omega)$. Then
\begin{equation*}
\Vert \eta(t, \omega)-\eta(s, w)\Vert\leq\int_{s}^{t}\Vert H(\eta(\tau, w))\Vert d\tau\leq t-s.
\end{equation*}
Hence the limit exits and if it is not a critical point, then $\eta(\cdot, w)$ can be continued for $t>T^{+}(\omega)$.\\
Assume $T^{+}(\omega)=\infty$. It suffices to prove that for each $\epsilon>0$ there exists $t_{\epsilon}>0$ such that $\Vert \eta(t_{\epsilon}, \omega)-\eta(t, w)\Vert<\epsilon$ for any $t\geq t_{\epsilon}$. Argument by contradiction, we can find $\epsilon\in (0, \frac{\rho(d)}{2})$ and $\{t_{n}\}\subset \r^{+}$ with
$t_{n}\rightarrow +\infty$ and $\Vert \eta(t_{n}, \omega)-\eta(t_{n+1}, w)\Vert=\epsilon$ for all $n\geq 1$. Choose the smallest $t_{n}^{1}\in (t_{n}, t_{n+1})$
such that $\Vert \eta(t_{n}, \omega)-\eta(t_{n}^{1}, w)\Vert=\frac{\epsilon}{3}$ and let $\kappa_{n}:=\min\{z(\eta(s, \omega)): s\in[t_{n}, t_{n}^{1}]\}$. By the continuity of $\mu$, Proposition 3.6 and Proposition 7.1.1(viii) in \cite{rCh}, we have
\begin{align*}
\frac{\epsilon}{3}&=\Vert \eta(t_{n}, \omega)-\eta(t_{n}^{1}, w)\Vert\leq \int_{t_{n}}^{t_{n}^{1}}\Vert H(\eta(s, \omega))\Vert ds\leq t_{n}^{1}-t_{n}\\
&\leq \frac{2}{\kappa_{n}}\int_{t_{n}}^{t_{n}^{1}}\underset{\gamma\in \partial\Psi(\eta(s, u))}{\inf}\langle \gamma, H(\eta(s, \omega))\rangle ds=\frac{-2}{\kappa_{n}}\int_{t_{n}}^{t_{n}^{1}}\underset{\gamma\in \partial\Psi(\eta(s, u))}{\sup}\langle \gamma, -H(\eta(s, \omega))\rangle ds\\
&\leq \frac{-2}{\kappa_{n}}\int_{t_{n}}^{t_{n}^{1}}\frac{d}{ds}\Psi(\eta(s, \omega))ds=\frac{2}{\kappa_{n}}(\Psi(\eta(t_{n}, \omega)-\Psi(\eta(t_{n}^{1}, \omega)).
\end{align*}
Since $\Psi$ is bounded below, $\Psi(\eta(t_{n}, \omega)-\Psi(\eta(t_{n}^{1}, \omega)\rightarrow 0$, it follow that $\kappa_{n}\rightarrow 0$. Hence we can
find $s_{n}^{1}\in [t_{n}, t_{n}^{1}] $ such that $z(\eta(s_{n}^{1}, \omega))\rightarrow 0$ as $n\rightarrow\infty$. By the definition $\mu$ there exit
$\omega_{n}^{1}$ such that $\omega_{n}^{1}-\eta(s_{n}^{1}, \omega)\rightarrow 0$ and $\partial^{-}\Psi(\omega_{n}^{1})\rightarrow 0$. So $\lim\sup_{n\rightarrow\infty}\Vert \omega_{n}^{1}-\eta(t_{n}, \omega)\Vert\leq \frac{\epsilon}{3}$. Similarly, there exists a largest $t_{n}^{2}\in [t_{n}^{1}, t_{n+1}] $ such that $\Vert \eta(t_{n}^{2}, \omega)-\eta(t_{n+1}, w)\Vert=\frac{\epsilon}{3}$ and we can find
$\omega_{n}^{2}$ such that $\partial^{-}\Psi(\omega_{n}^{2})\rightarrow 0$ and $\lim\sup_{n\rightarrow\infty}\Vert \omega_{n}^{2}-\eta(t_{n+1}, \omega)\Vert\leq \frac{\epsilon}{3}$. So $\frac{\epsilon}{3}\leq \lim\sup_{n\rightarrow\infty}\Vert \omega_{n}^{1}-\omega_{n}^{2}\Vert\leq 2\epsilon<\rho(d)$, it contradicts with Lemma 3.11. The proof is completed.
\end{proof}
Let $P\subset S$, $\delta>0$ and define
\begin{equation*}
U_{\delta}(P):=\{w\in S: \mbox{dist}(w, P)< \delta\}.
\end{equation*}

\begin{lemma}
Let $d\geq c$. Then for every $\delta>0$ there exists $\epsilon=\epsilon(\delta)>0$ such that\\
  \item[(1)] $\Psi^{d+\epsilon}_{d-\epsilon}\cap K=K_{d}$\\
  \item[(2)] $\lim_{t\rightarrow T^{+}(w)}\Psi(\eta(t, w))<d-\epsilon$ for all $w\in \Psi^{d+\epsilon}\setminus U_{\delta}(K_{d})$.
\end{lemma}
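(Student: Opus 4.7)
The plan is to prove both parts by contradiction, using the pseudogradient flow of Proposition 3.6 together with the compactness ingredients furnished by Lemmas 3.9--3.11 and the standing assumption (in force from ``To prove the existence of infinitely many geometrically distinct solutions, we assume the contrary'') that the set $\mathcal{F}$ of distinct critical orbits is finite.

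For part (1), I would suppose there exist $\epsilon_n\downarrow 0$ and $u_n\in K\cap\Psi_{d-\epsilon_n}^{d+\epsilon_n}$ with $u_n\notin K_d$. Choosing $\omega_n\in m(u_n)$, Proposition 3.5(i) yields that the $\omega_n$ are critical points of $J$ with $J(\omega_n)\to d$, and Lemma 3.1(vi) shows that they are bounded in $E$. I would then use the $\mathbb{Z}^N$-translation invariance and the concentration argument from the proof of Theorem 1.1 (which in turn relies on Proposition 2.2) to extract, after translating by some $k_n\in\mathbb{Z}^N$, a subsequence of $k_n*\omega_n$ converging in $E$ to a critical point $\omega_\infty\in\mathcal{N}$ of $J$ at level $d$. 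Then $m^{-1}(\omega_\infty)\in K_d$, and by the Lipschitz continuity of $m^{-1}$ (Lemma 3.9) the translated $k_n*u_n$ approach $m^{-1}(\omega_\infty)\in K_d$ while remaining in $K\setminus K_d$, contradicting the gap $\kappa>0$ of Lemma 3.10.

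For part (2), the crucial preliminary claim is that there exist $\epsilon_0>0$ and $\beta>0$ such that $\mu(v)\geq\beta$ for every $v\in\Psi_{d-\epsilon_0}^{d+\epsilon_0}\setminus U_{\delta/2}(K_d)$. To establish it I would argue exactly as in (1): if it fails, pick $v_n$ in the complement with $\mu(v_n)\to 0$ and, by the definition of $\mu$, produce $v_n'$ with $\|v_n-v_n'\|\to 0$ and $\|\partial^-\Psi(v_n')\|\to 0$; then $v_n'$ is a Palais--Smale sequence for $\widehat{\Psi}$, so by Proposition 3.5(ii) the associated $\omega_n\in m(v_n')$ is a bounded Palais--Smale sequence for $J$ at level $d$. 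The same translation-and-extraction argument produces a limit in $m(K_d)$, and Lemma 3.9 then forces $v_n'$, hence $v_n$, to approach $K_d$, contradicting $v_n\notin U_{\delta/2}(K_d)$.

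With the claim in hand, fix $\epsilon\in(0,\min\{\epsilon_0,\beta\delta/8\})$ and take any $w\in\Psi^{d+\epsilon}\setminus U_\delta(K_d)$. Proposition 3.6 gives $\frac{d}{ds}\Psi(\eta(s,w))\leq -\frac{1}{2}\mu(\eta(s,w))$ along the flow, while $\|H\|\leq 1$ yields $\|\eta(s,w)-w\|\leq s$. If the trajectory ever enters $U_{\delta/2}(K_d)$, the first entrance time $t_1$ satisfies $t_1\geq\delta/2$, so as long as $\Psi$ remains above $d-\epsilon$ the claim applies on $[0,t_1]$ and yields $\Psi(\eta(t_1,w))\leq(d+\epsilon)-\beta\delta/4<d-\epsilon$, contradicting the assumption that $\lim_t\Psi(\eta(t,w))\geq d-\epsilon$. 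Otherwise the trajectory stays outside $U_{\delta/2}(K_d)$, so $\mu\geq\beta$ along it; then either $\Psi(\eta(t,w))$ drops below $d-\epsilon$ in finite time, or $T^+(w)<\infty$ and Lemma 3.12 produces a limit $\omega_\ast\in K$ with $\Psi(\omega_\ast)\in[d-\epsilon,d+\epsilon]$, which by part (1) lies in $K_d$, again a contradiction. The main obstacle I anticipate is the compactness step common to both (1) and the claim, where the finiteness of $\mathcal{F}$, the $\mathbb{Z}^N$-periodicity, and the nonvanishing Proposition 2.2 must be carefully combined to extract a convergent sequence of translates; once this is in place the remaining flow estimates are routine.
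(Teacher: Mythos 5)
Your high-level strategy matches the paper's — establish a positive lower bound for $\mu$ near $K_d$, then run the flow estimate from Proposition 3.6 — but in the two places where you reach for compactness, the paper instead invokes the discreteness dichotomy of Lemma 3.11, and this is not a cosmetic difference: your compactness step as stated is a gap.

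For part (1), you set up a contradiction, translate by $k_n\in\mathbb{Z}^N$, and then assert that a subsequence of $k_n*\omega_n$ converges \emph{in $E$} to a critical point $\omega_\infty\in\mathcal{N}$ at level exactly $d$, from which Lipschitz continuity of $m^{-1}$ and the gap $\kappa>0$ give a contradiction. But the concentration argument from the proof of Theorem 1.1 (via Proposition 2.2) only delivers \emph{weak} convergence of the translated sequence to a nonzero weak limit; it gives neither norm convergence (which you need to invoke Lemma 3.9 and then Lemma 3.10) nor $J(\omega_\infty)=d$ (Fatou only gives $J(\omega_\infty)\le d$). The paper sidesteps all of this: since $\mathcal{F}$ is finite by standing assumption, $K$ consists of finitely many $\mathbb{Z}^N$-orbits and hence finitely many critical values, so $\Psi_{d-\epsilon}^{d+\epsilon}\cap K=K_d$ for $\epsilon$ smaller than the gap to the nearest other critical value — no compactness needed.

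For part (2), the preliminary claim that $\mu$ is bounded below away from $K_d$ is exactly the crux, but your justification again asserts that the PS-sequence $\omega_n\in m(v_n')$ converges (up to translation) to a point of $m(K_d)$, and then that $v_n'\to K_d$ via Lemma 3.9. This is the same unproved strong-compactness step, and in the $\mathbb{Z}^N$-periodic setting PS sequences in general only converge weakly up to translation (they can split into multiple profiles). The paper's proof replaces this by a direct application of Lemma 3.11: choosing $\delta<\rho(d+1)$, it produces two PS-sequences $\{\omega_n^1\}\subset U_\delta(\omega_0)\setminus U_{\delta/2}(\omega_0)$ and $\{\omega_n^2\}\to\omega_0$ with $\tfrac{\delta}{2}\le\limsup\|\omega_n^1-\omega_n^2\|\le\delta<\rho(d+1)$, which is precisely the forbidden middle ground in the dichotomy. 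That is the mechanism you are missing. Your closing flow estimate (entry into the annulus takes time $\ge\delta/2$, during which $\Psi$ drops by at least $\tau\delta/4>\epsilon$, plus the alternative that the trajectory limits to a critical point, excluded by part (1) and Lemma 3.12) is essentially the paper's and is fine once the lower bound on $\mu$ is secured.
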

\begin{proof}
Since we assume that $\mathcal{F}$ is a finite set, so (1) holds for $\epsilon>0$ small enough. Without loss of generality,
we assume that $U_{\delta}(K_{d})\subset J^{d+1}$ and $\delta<\rho(d+1)$. In order to find $\epsilon>0$ such that (2) holds, we let
\begin{equation*}
\tau: =\inf\Big\{\mu(\omega): \omega\in U_{\delta}(K_{d})\setminus U_{\frac{\delta}{2}}(K_{d})\Big\}
\end{equation*}
and claim that $\tau>0$. Argument by contradiction. Assume that there exits a sequence $\{v_{n}^{1}\}\subset U_{\delta}(K_{d})\setminus U_{\frac{\delta}{2}}(K_{d})$ such that $\mu(v_{n}^{1})\rightarrow 0$. According to the definition of $\mu$, there exists a PS-sequence $\{\omega_{n}^{1}\}$
of $\Psi$ such that $\Vert\omega_{n}^{1}- v_{n}^{1}\Vert\rightarrow 0$ as $n\rightarrow\infty$. Using this limit, the finiteness assumption of $\mathcal{F}$ and the $Z^{N}$-invariance of $\Psi$, we may assume that $\omega_{n}^{1}\in U_{\delta}(\omega_{0})\setminus U_{\frac{\delta}{2}}(\omega_{0})$ for some $\omega_{0}\in K_{d}$. Let $v_{n}^{2}\rightarrow \omega_{0}$. Since $\omega_{0}\in K_{d}$ and $\mu$ is continuous, $\mu(v_{n}^{2})\rightarrow 0$. As before, there exists a PS-sequence $\{\omega_{n}^{2}\}$
of $\Psi$ such that $\Vert\omega_{n}^{2}- v_{n}^{2}\Vert\rightarrow 0$ as $n\rightarrow\infty$, moreover, $\Vert\omega_{n}^{2}- \omega_{0}\Vert\rightarrow 0$ as $n\rightarrow\infty$.  So, we have
\begin{equation*}
\frac{\delta}{2}\leq\underset{n\rightarrow\infty}{\lim\sup}\Vert \omega_{n}^{1}-\omega_{n}^{2}\Vert\leq \delta<\rho(d+1),
\end{equation*}
this contradicts with Lemma 3.11.Hence $\tau$ is positive. Choose $\epsilon<\frac{\delta\tau}{4}$
such that (1) holds. By Lemma 3.12 and (1), the only way (2) can fail is that $\eta(t, w)\rightarrow \tilde{\omega}\in K_{d}$ as $t\rightarrow T^{+}(\omega)$
for some $w\in \Psi^{d+\epsilon}\setminus U_{\delta}(K_{d})$. In this case we let
\begin{equation*}
t_{1}:=\sup\{t\in[0, T^{+}(\omega)): \eta(t, w)\not\in U_{\delta}(\tilde{\omega})\}
\end{equation*}
and
\begin{equation*}
t_{2}:=\sup\{t\in(t_{1}, T^{+}(\omega)): \eta(t, w)\in U_{\frac{\delta}{2}}(\tilde{\omega})\}.
\end{equation*}
Then
\begin{equation*}
\frac{\delta}{2}=\Vert \eta(t_{1}, w)-\eta(t_{2}, w)\Vert \leq \int_{t_{1}}^{t_{2}}\Vert H(\eta(s, \omega))\Vert ds\leq t_{2}-t_{1},
\end{equation*}
and
\begin{align*}
\Psi(\eta(t_{1}, \omega)-\Psi(\eta(t_{2}, \omega)&=\int_{t_{1}}^{t_{2}}\frac{d}{ds}\Psi(\eta(s, \omega))ds\\
&\leq\int_{t_{1}}^{t_{2}}\underset{\gamma\in \partial\Psi(\eta(s, u))}{\sup}\langle \gamma, -H(\eta(s, \omega))\rangle ds\\
&\leq -\int_{t_{1}}^{t_{2}}\underset{\gamma\in \partial\Psi(\eta(s, u))}{\inf}\langle \gamma, H(\eta(s, \omega))\rangle ds\\
&\leq -\frac{1}{2}\mu(\eta(s, u))(t_{2}-t_{1})\\
&\leq -\frac{1}{2}\tau(t_{2}-t_{1})\leq -\frac{\delta\tau}{4}.
\end{align*}
Hence $\Psi(\eta(t_{2}, w))\leq d+\epsilon-\frac{\delta\tau}{4}<d$ and therefore $\eta(t, w)\not\rightarrow \tilde{\omega}$, it contradicts with our
assumption. This completes the proof.
\end{proof}

\begin{proof}[\textbf{Proof of Theorem 1.2}]
Let
\begin{equation*}
\Sigma :=\{A\subset S: A=\overline{A}, A=-A\}.
\end{equation*}
Recall that the definition of the Krasnoselskii genus $\gamma(A)$, for $A\subset \Sigma$ in \cite{rSt}. Define
\begin{equation*}
c_{k}:=\inf\{d\in \mathbb{R}: \gamma(\Psi^{d})\geq k\}, \quad k\geq 1.
\end{equation*}
Thus $c_{k}$ are those numbers at which the set $\Psi^{d}$ change genus and it is easy to see that $c_{k}\leq c_{k+1}$. We claim:
\begin{equation*}
K_{c_{k}}\neq \emptyset \quad \text{and}\quad c_{k}<c_{k+1}\quad \text{for all}\, k\in \mathbb{N}.
\end{equation*}
To prove this, let $k\geq 1$ and set $d:=c_{k}$. By Lemma 3.10, $K_{d}$ is either empty or a discrete set, hence $\gamma(K_{d})=0$ or 1. By the
continuity property of the genus, there exists $\delta>0$ such that $\gamma(\overline{U})=\gamma(K_{d})$, where $U:=U_{\delta}(K_{d})$
and $\delta< \frac{\kappa}{2}$. For such $\delta$, choose $\epsilon>0$ so that the conclusions of Lemma 3.13 hold. Then for each
$w\in \Psi^{d+\epsilon}\setminus U$ there exists $t\in [0, T^{+}(w))$ such that $\Psi(\eta(t, w))\leq d-\epsilon$. Let $e=e(w)$ be the infimum
of the time for which $\Psi(\eta(t, w))\leq d-\epsilon$. Since $d-\epsilon$ is not a critical value of $\Psi$, it is easy to see by the Implicit
Function Theorem that $e$ is a continuous mapping and since $\Psi$ is even, $e(-w)=e(w)$. Define a mapping $h:\Psi^{d+\epsilon}\setminus U\rightarrow \Psi^{d+\epsilon}$
by setting $h(w):=\eta(e(w), w)$. Then $h$ is odd and continuous, so it follows from the properties of the genus and the definition of $c_{k}$ that
\begin{equation*}
\gamma(\Psi^{d+\epsilon})\leq \gamma(\overline{U})+\gamma(\Psi^{d-\epsilon})\leq \gamma(\overline{U})+k-1=\gamma(K_{d})+k-1.
\end{equation*}
If $\gamma(K_{d})=0$, then $\gamma(\Psi^{d+\epsilon})\leq k-1$, it contradicts the definition of $c_{k}$. So $\gamma(K_{d})=1$ and $K_{d}\neq \emptyset$.
If $c_{k}= c_{k+1}=d$, then $\gamma(K_{d})>1$. Since this is impossible, we must have $c_{k}\leq c_{k+1}$ and $K_{c_{k}}\neq \emptyset$ for all $k\geq 1$. Hence, the proof is finished.
\end{proof}

\section{Proof of Theorem 1.3} \label{pf2}
In this section, we assume that $V(x)$ is coercive, that is, $V(x)\rightarrow +\infty$ as $\vert x\vert \rightarrow\infty$. To prove Theorem 1.3, we  need to adapt the proof of Theorem 1.1. The main difference between them is how to show that the solution is nontrivial.
From section 3, we know that Lemma 3.1 is very important. By a simple observation, in addition to Lemma 3.1(vi), the proof of other results in Lemma 3.1 are the same as the coercive potential case.
\begin{lemma}
 Assume that $(F_{1})-(F_{4})$ hold and $V(x)\rightarrow +\infty$ as $\vert x\vert \rightarrow\infty$, then $J$ is coercive on $E$ ,i.e., $J(u)\rightarrow \infty$, as $\Vert u\Vert\rightarrow\infty$.
\end{lemma}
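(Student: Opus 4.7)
The plan is to adapt the proof of Lemma 3.1(vi) to the coercive-potential case; the key structural change is that the translation plus Lions step used under periodicity is unavailable here, and is replaced by the fact that, when $V$ is coercive, the embedding $E\hookrightarrow L^{q}(\mathbb{R}^{N})$ is compact for every $q\in[2,2^{*}_{\alpha})$ (see \cite{rPXZ}). Following the pattern of Lemma 3.1(vi), I argue by contradiction from a sequence $\{u_{n}\}$ with $\|u_{n}\|\to\infty$ and $J(u_{n})\leq d$, set $v_{n}:=u_{n}/\|u_{n}\|$, and pass (along a subsequence) to the limit $v_{n}\rightharpoonup v$ in $E$ with $v_{n}\to v$ strongly in each such $L^{q}$ and pointwise a.e.

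The argument then splits on whether $v\equiv 0$. In the non-vanishing case one has, on $\{v\neq 0\}$, $|u_{n}(x)|=\|u_{n}\|\,|v_{n}(x)|\to\infty$, so $(F_{3})$ combined with $F\geq 0$ from (3.2) and Fatou's lemma applied to the non-negative integrand $(F(x,u_{n})/u_{n}^{2})v_{n}^{2}$ yields
\[
\int_{\mathbb{R}^{N}}\frac{F(x,u_{n})}{\|u_{n}\|^{2}}\,dx\;\longrightarrow\;+\infty.
\]
The identity $J(u_{n})/\|u_{n}\|^{2}=1/2-\int F(x,u_{n})/\|u_{n}\|^{2}\,dx$ would then force $J(u_{n})\to-\infty$, which conflicts with the lower bound $J(u_{n})\geq c>0$ supplied by Lemma 3.1(iv) for sequences in the Nehari manifold. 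In the vanishing case $v\equiv 0$, the compact embedding gives $v_{n}\to 0$ in $L^{2}\cap L^{p}$, so (3.1) and H\"older produce $\int F(x,sv_{n})\,dx\to 0$ for every fixed $s>0$; the ray-maximality in Lemma 3.1(i) then delivers
\[
d\geq J(u_{n})\geq J(sv_{n})=\frac{s^{2}}{2}-\int_{\mathbb{R}^{N}} F(x,sv_{n})\,dx\;\longrightarrow\;\frac{s^{2}}{2},
\]
a contradiction once $s>\sqrt{2d}$.

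I expect the main obstacle to be closing both branches of the dichotomy from the pure assumption $J(u_{n})\leq d$: the $v\not\equiv 0$ branch requires the positive lower bound $J(u_{n})\geq c$ from Lemma 3.1(iv), while the $v\equiv 0$ branch requires the maximizing inequality $J(u_{n})\geq J(sv_{n})$ from Lemma 3.1(i), and both ingredients — exactly as implicitly in Lemma 3.1(vi), whose proof starts by taking $\{\omega_{n}\}\subset\mathcal{N}$ — come from handling $u_{n}$ as an element of the Nehari set $\mathcal{N}$. Indeed, without any such restriction, $(F_{3})$ forces $J(tu_{0})\to-\infty$ along any ray $tu_{0}$ with $u_{0}\neq 0$, so the literal conclusion fails on $E$ and must be read as coercivity on $\mathcal{N}$ (which is all that is used in Section 4). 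Once the Nehari bounds are admitted, the replacement of the Lions lemma by the coercivity-induced compact embedding $E\hookrightarrow L^{q}$ is the only genuinely new step, and the remainder transcribes directly from the proof of Lemma 3.1(vi).
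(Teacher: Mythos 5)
Your proposal is correct and follows essentially the same route as the paper: contradiction from $\{\omega_n\}\subset\mathcal{N}$ with $\|\omega_n\|\to\infty$ and $J(\omega_n)\le d$, normalization $v_n=\omega_n/\|\omega_n\|$, the coercivity-induced compact embedding replacing the translation/Lions step of Lemma 3.1(vi), and the same dichotomy on $v=0$ with the same two contradictions. Your observation that the conclusion must be read as coercivity on $\mathcal{N}$ rather than on all of $E$ matches what the paper's own proofs actually establish and use.
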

\begin{proof}
Arguing by contraction, suppose there exists a sequence $\{\omega_{n}\}\subset\mathcal{N}$ such that $\Vert \omega_{n}\Vert\rightarrow \infty$ and
$J(\omega_{n})\leq d$ for some $d>0$. Let $v_{n}=\frac{\omega_{n}}{\Vert \omega_{n}\Vert }$. Then $v_{n}\rightharpoonup v$ in $E$,  $v_{n}\rightarrow v$ in $L^{p}(\rn)$ and $v_{n}(x)\rightarrow v$ a.e. $x\in\rn$,
up to a subsequence. If $v=0$, then by (3.1),
$\irn F(x, sv_{n})dx\rightarrow 0$ for all $s\in\r$ and therefore
\begin{align*}
d\geq J(\omega_{n})\geq J(sv_{n})=\frac{s^{2}}{2}-\irn F(x, sv_{n})dx\rightarrow \frac{s^{2}}{2},
\end{align*}
this yields a contraction if $s>\sqrt{2d}$. So $v_{n}(x)\rightarrow v\neq 0$ a.e. $x\in\rn$. Moreover, by $(F_{3})$ one has
\begin{align*}
0\leq \frac{J(\omega_{n})}{\Vert \omega_{n}\Vert^{2} }=\frac{1}{2}-\irn \frac{ F(x, \omega_{n})}{\Vert \omega_{n}\Vert^{2}}dx=\frac{1}{2}-\irn \frac{ F(x,  \omega_{n}) }{\omega_{n}^{2}}v_{n}^{2}dx\rightarrow -\infty.
\end{align*}
This yields a contradiction.
\end{proof}

\begin{proof}[\textbf{Proof of Theorem 1.3}]
Similar with the proof of Theorem 1.1, there exists a bounded PS-sequence $\{\omega_{n}\}$ of $J$. Up to a subsequence,  $\omega_{n}\rightharpoonup u$, $\omega_{n}\rightarrow u$ in $L^{p}(\rn)$
 and $\omega_{n}\rightarrow u$ in a.e. $x\in \rn$.
 If $u=0$, then $\irn F(x, \omega_{n})dx\rightarrow 0$ and $\irn f(x, \omega_{n})\omega_{n}dx\rightarrow 0$ as $n\rightarrow\infty$, this implies that $\Vert \omega_{n}\Vert\rightarrow 0$ as $n\rightarrow\infty$ and this contradicts with $\Vert \omega\Vert$ bounded away from 0, for any $\omega\in \mathcal{N}$, so $u\neq 0$ is a nontrivial solution. Moreover, by Fatou's lemma and (3.2), it is easy to know that $u$ is a ground state solution.
\end{proof}

\section{Proof of Theorem 1.4} \label{pf2}
We assume that  $\inf_{x\in \rn}V(x)\leq V(x)<\lim_{\vert x\vert\rightarrow \infty}V(x)=\sup_{x\in \rn}V(x)<+\infty$ and $f(x, u)=f(u)$ hold in this section. As in Section 4, except for the proof of Lemma 3.1(vi), others are the same. Now we give its proof in the bounded potential well case.
\begin{lemma}
Assume that $f(x, u)=f(u)$,  $(F_{1})-(F_{4})$ and  $\inf_{x\in \rn}V(x)\leq V(x)<\lim_{\vert x\vert\rightarrow \infty}V(x)=\sup_{x\in \rn}V(x)<+\infty$  hold, then   $J$ is coercive on $E$ ,i.e., $J(u)\rightarrow \infty$, as $\Vert u\Vert\rightarrow\infty$.
\end{lemma}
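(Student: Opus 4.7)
The plan is to adapt the argument of Lemma 3.1(vi), normalizing a would-be unbounded sequence on $\mathcal{N}$ as before, but replacing the periodicity-driven reduction ``$\{y_{n}\}$ may be assumed bounded'' by a translation argument that exploits the crucial structural hypothesis $f(x,u)=f(u)$ together with the two-sided boundedness of $V$. I argue by contradiction: assume $\{\omega_{n}\}\subset\mathcal{N}$ with $\Vert\omega_{n}\Vert\to\infty$ and $J(\omega_{n})\leq d$, and set $v_{n}:=\omega_{n}/\Vert\omega_{n}\Vert$. Since $V$ is bounded above and away from $0$, $\Vert\cdot\Vert$ is equivalent to the $H^{\alpha}$ norm, so $\{v_{n}\}$ is bounded in $H^{\alpha}(\rn)$, and up to a subsequence $v_{n}\rightharpoonup v$ and $v_{n}(x)\to v(x)$ a.e. I then split into the two standard concentration-vanishing alternatives.

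In the vanishing case $\sup_{y\in\rn}\int_{B_{R}(y)}v_{n}^{2}\,dx\to 0$, Proposition 2.2 yields $v_{n}\to 0$ in $L^{q}(\rn)$ for every $q\in(2,2_{\alpha}^{*})$. For any fixed $s>0$ and $\eta>0$, I use (3.1) to choose $\epsilon$ so small that $\epsilon s^{2}\Vert v_{n}\Vert_{2}^{2}<\eta/2$ (legal since $\{v_{n}\}$ is $L^{2}$-bounded) and then $n$ so large that $C_{\epsilon}s^{p}\Vert v_{n}\Vert_{p}^{p}<\eta/2$, which gives $\irn F(sv_{n})\,dx<\eta$. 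Since $\omega_{n}\in\mathcal{N}$ and $sv_{n}\in\r^{+}\omega_{n}$, Lemma 3.1(iii) yields $J(\omega_{n})\geq J(sv_{n})=\frac{s^{2}}{2}-\irn F(sv_{n})\,dx$, so $d\geq s^{2}/2$ for every $s>0$, a contradiction.

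In the non-vanishing case there exist $R,\delta>0$ and $\{y_{n}\}\subset\rn$ with $\int_{B_{R}(y_{n})}v_{n}^{2}\,dx\geq\delta$. Here I translate: set $\tilde{v}_{n}(x):=v_{n}(x+y_{n})$. The Gagliardo seminorm is translation invariant and, because $V$ is uniformly bounded, $\{\tilde{v}_{n}\}$ stays bounded in $H^{\alpha}(\rn)$; passing to a subsequence I get $\tilde{v}_{n}\rightharpoonup\tilde{v}$ in $H^{\alpha}(\rn)$ with $\int_{B_{R}(0)}\tilde{v}^{2}\,dx\geq\delta$ by local compactness, hence $\tilde{v}\not\equiv 0$. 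The crucial observation is that $f(x,u)=f(u)$ makes $F$ translation invariant, so the change of variable $x=y+y_{n}$ gives $\irn F(\omega_{n}(x))\,dx=\irn F(\tilde{\omega}_{n}(y))\,dy$ where $\tilde{\omega}_{n}=\Vert\omega_{n}\Vert\tilde{v}_{n}$. Consequently
\begin{equation*}
0<\frac{J(\omega_{n})}{\Vert\omega_{n}\Vert^{2}}=\frac{1}{2}-\irn\frac{F(\tilde{\omega}_{n})}{\tilde{\omega}_{n}^{2}}\tilde{v}_{n}^{2}\,dy.
\end{equation*}
On $\{\tilde{v}\neq 0\}$ one has $|\tilde{\omega}_{n}|=\Vert\omega_{n}\Vert|\tilde{v}_{n}|\to\infty$ a.e., so $(F_{3})$ together with Fatou's lemma drives the right-hand side to $-\infty$, while the left-hand side is bounded below by $c/\Vert\omega_{n}\Vert^{2}>0$ via Lemma 3.1(iv); this is the sought contradiction.

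The main obstacle, compared with Lemma 3.1(vi), is the loss of $\mathbb{Z}^{N}$-invariance of $J$: one can no longer transport $\omega_{n}$ by an $x$-translation and keep it inside $\mathcal{N}$. The fix is to translate only the autonomous piece $F$, which is permissible precisely because $f$ has no $x$-dependence, while keeping every other occurrence of $\omega_{n}$ untranslated (the quotient $J(\omega_{n})/\Vert\omega_{n}\Vert^{2}$ on the left of the displayed identity is the original functional, not a shifted one). The uniform bound on $V$ plays the complementary role of ensuring that the shifted test functions $\tilde{v}_{n}$ still live in a bounded subset of $H^{\alpha}(\rn)$, so that a nontrivial weak limit $\tilde{v}$ is available for the Fatou step.
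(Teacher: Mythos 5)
Your proof is correct and follows essentially the same route as the paper's Lemma 5.1: normalize the contradictory sequence, rule out vanishing via $(F_3)$, translate by $y_n$ to get a nonzero weak limit $\tilde{v}$, and exploit the autonomy $f(x,u)=f(u)$ so that $\int F(\omega_n)\,dx$ is translation invariant, after which Fatou's lemma with $(F_3)$ drives $J(\omega_n)/\|\omega_n\|^2$ to $-\infty$. You spell out the vanishing alternative in full (the paper defers to the argument of Lemma 3.1(vi)) and state explicitly that boundedness of $V$ keeps the translated sequence bounded in $H^\alpha$, but these are expository differences rather than a different proof.
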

\begin{proof}
 Arguing by contraction, suppose there exists a sequence $\{\omega_{n}\}\subset\mathcal{N}$ such that $\Vert \omega_{n}\Vert\rightarrow \infty$ and
$J(\omega_{n})\leq d$ for some $d>0$. Let $v_{n}=\frac{\omega_{n}}{\Vert \omega_{n}\Vert }$. Then $v_{n}\rightharpoonup v$ in $E$ and $v_{n}(x)\rightarrow v$ a.e. $x\in\rn$,
up to a subsequence. For some $R>0$, choose $y_{n}\in\rn$ satisfy \\
\begin{align*}
\int_{B_{R}(y_{n})}v_{n}^{2}dx=\underset{y\in\rn}{\sup}\int_{B_{R}(y)}v_{n}^{2}dx.
\end{align*}
Similar with the proof of Lemma 3.1(v), we may prove that there exists $\delta>0$ such that
\begin{align}
\int_{B_{R}(y_{n})}v_{n}^{2}dx\geq \delta.
\end{align}
Set $\widetilde{v}_{n}(\cdot)=v_{n}(\cdot-y_{n})$, then we have $\widetilde{v}_{n}\rightharpoonup \widetilde{v}$ in $E$, $\widetilde{v}_{n}\rightarrow \widetilde{v}$ in $L_{loc}^{2}(\rn)$, and $\widetilde{v}_{n}\rightarrow \widetilde{v}$ in a.e. $x\in \rn$. In virtue of (5.1), we have $\widetilde{v}\neq 0$.
By $(F_{3})$ we obtain that
\begin{align*}
0\leq \frac{J(u_{n})}{\Vert u_{n}\Vert^{2} }=\frac{1}{2}-\irn \frac{ F(u_{n})}{\Vert u_{n}\Vert^{2}}dx=\frac{1}{2}-\irn \frac{ F(u_{n}(x-y_{n})) }{\vert u_{n}(x-y_{n})\vert^{2}}\widetilde{v}_{n}^{2}dx\rightarrow -\infty.
\end{align*}
This is a contradiction. we complete the proof.
\end{proof}
 We shall need a limiting problem
\begin{equation}  \label{1}
(-\Delta)^{\alpha} u+V_{\infty}u =f(u), \quad  x\in \rn.
\end{equation}
The energy functional corresponding to it is
\begin{equation*} \label{fcl}
J_{\infty}(u):= \frac{1}{2}\irn(\vert  (-\Delta)^{\frac{\alpha}{2}} u\vert^{2}+V_{\infty}u^{2})dx-\irn F(u)\,dx.
\end{equation*}
Let
\begin{equation*}
\mathcal{N}_{\infty}=\{u\in E\setminus\{0\}: \irn\vert  (-\Delta)^{\frac{\alpha}{2}} u\vert^{2}+V_{\infty}u^{2}dx=\irn f(u)u\,dx\}
\end{equation*}
be the Nehari manifold for $J_{\infty}$. Since $V_{\infty}$ be constant and $f$ independs on $x$, there exists a solution $u_{\infty}\neq 0$ for
minimizes $J_{\infty}$ on $\mathcal{N}_{\infty}$ by Theorem 1.1.

\begin{lemma} \label{compar}
(i) If $V< V_\infty$, then $0<c < c_\infty$, where $c_{\infty}:=\underset{u\in \mathcal{N}_{\infty}}{\inf}J_{\infty}(u)$. \\
(ii) For $\{u_{n}\}\subset \mathcal{N}$, if $J(u_n)\to d\in (0, c_\infty)$ and $J'(u_n)\to 0$, then $u_n\rh u\ne 0$ after passing to a sub\-sequence, $u$ is a critical point of $J$ and $J(u)\le d$.
\end{lemma}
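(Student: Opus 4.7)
I would test the infimum $c$ against the ground state $u_\infty \in \mathcal{N}_\infty$ of the limit functional $J_\infty$. Lemma 3.1(i)--(iv) carries over to the bounded-potential-well setting (only part (vi) relied on periodicity, and it is replaced here by Lemma~5.1), so there exists $t_\infty > 0$ with $t_\infty u_\infty \in \mathcal{N}$ and $J(t_\infty u_\infty) = \max_{t\geq 0} J(t u_\infty)$. Since $V(x) < V_\infty$ everywhere and $u_\infty \not\equiv 0$, the strict pointwise comparison yields
\[
J(t u_\infty) - J_\infty(t u_\infty) = \frac{t^{2}}{2} \irn (V(x) - V_\infty) u_\infty^{2}\, dx < 0 \quad \text{for every } t > 0.
\]
Chaining this with $J_\infty(u_\infty) = \max_{t \geq 0} J_\infty(t u_\infty) = c_\infty$ (Lemma~3.1(i) applied to the trivially periodic $J_\infty$) produces $0 < c \leq J(t_\infty u_\infty) < J_\infty(t_\infty u_\infty) \leq c_\infty$, with positivity supplied by Lemma 3.1(iv).

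\textbf{Plan for part (ii): setup.} Lemma~5.1 makes $\{u_n\}$ bounded in $E$, so after extraction $u_n \rightharpoonup u$ in $E$, $u_n \to u$ a.e., and $u_n \to u$ in $L^{q}_{\mathrm{loc}}(\rn)$ for every $q \in [2, 2^*_\alpha)$. The main obstacle is showing $u \neq 0$: because $J$ is no longer translation invariant, the periodic argument from Theorem~1.1 fails and a concentration-compactness analysis driven by the strict inequality $d < c_\infty$ is necessary.

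\textbf{Ruling out $u = 0$.} I would first exclude vanishing: if $u_n \to 0$ in $L^p(\rn)$, then (3.1) forces $\irn F(u_n)\,dx \to 0$ and $\irn f(u_n) u_n\,dx \to 0$; combined with $J'(u_n) u_n \to 0$ this gives $\|u_n\|^{2} \to 0$ and hence $J(u_n) \to 0$, contradicting $d > 0$. Proposition~2.2 then supplies $R, \delta > 0$ and $\{y_n\} \subset \rn$ with $\int_{B_R(y_n)} u_n^{2}\, dx \geq \delta$. If $\{y_n\}$ is bounded, local compactness immediately gives $u \neq 0$. The delicate case is $|y_n| \to \infty$: setting $\widetilde{u}_n(\cdot) := u_n(\cdot + y_n)$, Fatou yields $\widetilde{u}_n \rightharpoonup \widetilde{u} \neq 0$; since $V(x + y_n) \to V_\infty$ pointwise and $f$ is autonomous, passing to the limit in the equation for $u_n$ makes $\widetilde{u}$ a solution of the limit problem, so $\widetilde{u} \in \mathcal{N}_\infty$ and $J_\infty(\widetilde{u}) \geq c_\infty$. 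Translation invariance of the integrals, Fatou's lemma applied to the nonnegative integrand $\tfrac{1}{2} f(u_n) u_n - F(u_n)$ from (3.2), and the Nehari identity for $\widetilde{u}$ then deliver
\[
d = \lim_{n \to \infty} \irn \Big(\tfrac{1}{2} f(u_n) u_n - F(u_n)\Big) dx \geq \irn \Big(\tfrac{1}{2} f(\widetilde{u}) \widetilde{u} - F(\widetilde{u})\Big) dx = J_\infty(\widetilde{u}) \geq c_\infty,
\]
contradicting $d < c_\infty$. Hence $\{y_n\}$ must be bounded and $u \neq 0$.

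\textbf{Concluding step.} Once $u \neq 0$ is established, passing to the limit in $J'(u_n)\varphi \to 0$ for $\varphi \in C_c^\infty(\rn)$ via weak convergence together with local compactness yields $J'(u) = 0$, so $u \in \mathcal{N}$. Applying Fatou once more to the nonnegative integrand from (3.2) produces $J(u) = J(u) - \tfrac{1}{2} J'(u) u \leq \lim_{n \to \infty} (J(u_n) - \tfrac{1}{2} J'(u_n) u_n) = d$, completing the proof.
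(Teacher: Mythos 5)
Your proof is correct and follows the same essential strategy as the paper's, with a minor reorganization worth noting. For part (i), you and the paper both test $c$ against $s_0 u_\infty \in \mathcal{N}$ and use the strict pointwise inequality $V < V_\infty$ plus $J_\infty(u_\infty) = \max_{t\geq 0} J_\infty(tu_\infty) = c_\infty$; identical. For part (ii), the paper first proves $J(u)\le d$ via Fatou (this inequality does not require $u\ne 0$), and then, to rule out $u = 0$, it assumes $u = 0$ and shows that $\{u_n\}$ is then a Palais--Smale sequence for $J_\infty$ (using $u_n\to 0$ in $L^2_{loc}$ and $V\to V_\infty$ at infinity to estimate $J-J_\infty$ and $J'-J'_\infty$ on it); after translation, it extracts a nontrivial critical point $v$ of $J_\infty$ with $J_\infty(v)\le d < c_\infty$, a contradiction. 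You instead reach the same dichotomy by extracting the concentration centers $y_n$ first and splitting on whether $\{y_n\}$ is bounded: in the unbounded case you pass to the limit directly in the translated weak formulation using $V(\cdot+y_n)\to V_\infty$ locally uniformly, rather than converting the original $J$-PS-sequence into a $J_\infty$-PS-sequence. Both organizations rest on the same two pillars---Fatou applied to $\tfrac12 f(u_n)u_n - F(u_n)$ (which uses (3.2)) and the strict gap $d < c_\infty$---and both are valid; the paper's version is slightly tighter because the assumption $u=0$ immediately forces $y_n$ to escape and lets it quote the full $J_\infty$-PS machinery, while yours makes the case split explicit and handles the two cases separately. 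One small remark: since $\{u_n\}\subset\mathcal N$ you have $\langle J'(u_n),u_n\rangle=0$ exactly, not merely $\to 0$, which the paper also uses implicitly.
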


\begin{proof}
(i) Let $s_0>0$ be such that $s_0u_\infty\in \cn$. Since $V(x) < V_\infty$ in $x\in \rn$, we have
\[
c \le J(s_0u_\infty) < J_\infty(s_0u_\infty) \le J_\infty(u_\infty) = c_\infty.
\]
(ii) Because $J$ is coercive on $\mathcal{N}$,  $\{u_{n}\}$ is bounded. Up to a subsequence, $u_n\rh u$ in $E$, $u_n(x)\to u(x)$ a.e. $x\in \rn$. By Fatou's lemma,
\begin{align*}
d & = J(u_n) -\frac12\langle J'(u_n),u_n\rangle +o(1) = \irn \Big(\frac12 f(u_n)u_n-F(u_n)\Big)\,dx + o(1) \\
& \ge \irn \Big(\frac12 f(u)u-F(u)\Big)\,dx+ o(1) = J(u) - \frac12\langle J'(u),u\rangle +o(1) = J(u)+o(1).
\end{align*}
So $J(u)\le d$ and it remains to show that $u\ne 0$. Arguing indirectly, suppose $u=0$. Since $u_n\to 0$ in $L^2_{loc}(\rn)$ and $V(x)\to V_\infty$ as $|x|\to\infty$,
\[
J(u_n)-J_\infty(u_n) = \frac12\irn(V(x)-V_\infty)u_n^2\,dx \to 0
\]
and therefore $J_\infty(u_n)\to d$.
Using the H\"older and the Sobolev inequalities and taking $v$ with $\|v\|=1$, we obtain
\begin{align*}
\left|\langle J'(u_n)-J'_\infty(u_n), v\rangle \right| & \le \irn (V_\infty-V(x)|u_n|\,|v|\,dx \\
& \le C\left(\irn(V_\infty-V(x))u_n^2\,dx\right)^{1/2}.
\end{align*}
As the right-hand side tends to 0 uniformly in $\|v\|=1$, $J'(u_n)-J'_\infty(u_n)\to 0$ and hence $J'_\infty(u_n)\to 0$. So
\begin{equation*}
0 = \langle J'(u_n),u_n\rangle \geq \frac{1}{2}\|u_n\|^2-C_1\int_{\{|u_n|\geq 1\}}  |u_n|^{p}\,dx
 \end{equation*}
and if $\|u_n\|_p\to 0$, then $u_n\to 0$ in $E$ which is impossible because $J(u_n)\to d > 0$.
Hence by Proposition 2.2, for some $R>0$,  there are $(y_n)\subset\rn$ and $\delta>0$ such that
\[
\int_{B_R(y_n)}u_n^2\,dx \ge \delta.
\]
Let $v_n(x) := u_n(x+y_n)$. Since $J_\infty$ is invariant with respect to translations by elements of $\rn$, $J_\infty(v_n)\to d$ and $J'_\infty(v_n) \to 0$. Moreover,
\[
\int_{B_1(0)}v_n^2\,dx = \int_{B_1(y_n)}u_n^2\,dx \ge \delta
\]
and therefore $v_n\rh v\ne 0$ after passing to a subsequence. It follows that $v$ is a nontrivial critical point of $J_\infty$ and $J_\infty(v)\le d < J_\infty(v_\infty)$ which is the desired contradiction.
\end{proof}

\begin{proof}[\textbf{Proof of Theorem 1.4}]
Similar with the proof of Theorem 1.1, there exists a bounded sequence $\{\omega_{n}\}$ such that $J(\omega_{n})\rightarrow c$  and
$J'(\omega_{n})\rightarrow 0$. Using Lemma 5.2 we obtain a critical point $u\neq 0$ of $J$ such that $J(u)\leq c$. So $J(u)=c$ and $u$
is a ground state solution of problem (1.1). The proof is completed.
\end{proof}

\section{Proof of Theorem 1.5} \label{pf2}
Now we seek the ground state solutions of problem (1.1) when $V$ and $f$ are asymptotically periodic in $x$. Firstly, by a simple observation,
Lemma 3.1 holds under assumptions $(V_{1})$ and $(F_{1})-(F_{4})$. Moreover, to prove Theorem 1.5, we need some lemmas.
\begin{lemma}
Assume $(V_{1})$ and $(F_{1})-(F_{5})$ hold. Then $J(u)\leq J_{p}(u)$, for all $u\in E$.
\end{lemma}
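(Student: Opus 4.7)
The plan is to rewrite the difference
\[
J_p(u)-J(u)=\frac12\irn\bigl(V_p(x)-V(x)\bigr)u^{2}\,dx+\irn\bigl(F(x,u)-F_p(x,u)\bigr)\,dx
\]
and to verify that both integrands are pointwise non-negative. The first is immediate from $(V_1)$, so the substance of the lemma reduces to proving the pointwise inequality $F(x,u)\geq F_p(x,u)$ on $\rn\times\r$.

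First I would pin down the signs of $f$ and $f_p$. By $(F_2)$, $f(x,u)/|u|\to 0$ as $u\to 0$, uniformly in $x$. Combining this with the monotonicity $(F_4)$ of $u\mapsto f(x,u)/|u|$ on each of $(-\infty,0)$ and $(0,\infty)$, and letting the variable approach $0$ inside the monotonicity inequality, one gets $f(x,u)/|u|\geq 0$ for $u>0$ and $\leq 0$ for $u<0$; in other words, $f(x,u)$ carries the sign of $u$. From $(F_5)(i)$, $|f_p(x,u)|\leq|f(x,u)|=o(|u|)$ as $u\to 0$, so $f_p$ satisfies the same vanishing at $0$, and the identical limit-at-zero argument, now using $(F_5)(iii)$, shows $f_p(x,u)$ also carries the sign of $u$.

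Once the signs are fixed, $(F_5)(i)$ upgrades to
\[
f(x,u)\geq f_p(x,u)\geq 0\quad\text{for }u>0,\qquad f(x,u)\leq f_p(x,u)\leq 0\quad\text{for }u<0,
\]
i.e.\ $\mathrm{sgn}(u)\bigl(f(x,u)-f_p(x,u)\bigr)\geq 0$ throughout $\rn\times\r$. I would then integrate
\[
F(x,u)-F_p(x,u)=\int_{0}^{u}\bigl(f(x,s)-f_p(x,s)\bigr)\,ds,
\]
which is non-negative in both cases: for $u>0$ the integrand is non-negative on $[0,u]$, while for $u<0$ the integrand is non-positive on $[u,0]$ but the direction of integration is reversed. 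Substituting back into the initial identity gives $J(u)\leq J_p(u)$, proving the lemma.

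I do not anticipate a serious obstacle; after the sign analysis the rest is an elementary pointwise comparison followed by integration. The only mildly delicate point is that $(F_5)(iii)$ alone does not pin down the sign of $f_p$: one must first use $(F_5)(i)$ together with $(F_2)$ to see that $f_p(x,u)=o(|u|)$ as $u\to 0$, so that the limit-at-zero step used for $f$ can be rerun for $f_p$.
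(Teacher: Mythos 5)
Your proposal is correct, and it fills in exactly the details that the paper leaves implicit (the paper's proof consists of the single line ``It follows an easy inspection''). The decomposition of $J_p - J$ into the potential term, handled directly by $(V_1)$, and the nonlinearity term, handled by a pointwise comparison of $F$ and $F_p$, is the natural and presumably intended argument. You are right to flag the one nontrivial point: $(F_5)(i)$ alone only bounds $|f_p|$, so one must first pin down the sign of $f_p(x,u)$; deducing $f_p(x,u)=o(|u|)$ from $|f_p|\le|f|$ together with $(F_2)$, and then rerunning the monotonicity-at-zero argument with $(F_5)(iii)$, is the correct way to do this. (For $f$ itself, the sign fact $\mathrm{sgn}(u)\,f(x,u)\ge 0$ is already noted in the paper just before Lemma~3.1 via $(F_2)$ and $(F_4)$; your argument for $f_p$ is its exact analogue.) No gap.
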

It follows an easy inspection.
\begin{lemma}
Assume $(V_{1})$ and $(F_{5})(ii)$ hold. Assume that $\{u_{n}\}\subset E$ satisfies $u_{n}\rightharpoonup 0$ and $\varphi_{n}\in E$ is bounded. Then
\begin{align}
\irn (V(x)-V_{p}(x))u_{n}\varphi_{n}dx\rightarrow 0,
\end{align}
\begin{align}
\irn (f(x, u_{n})-f_{p}(x, u_{n}))\varphi_{n}dx\rightarrow 0,
\end{align}
\begin{align}
\irn (F(x, u_{n})-F_{p}(x, u_{n}))dx\rightarrow 0.
\end{align}
\end{lemma}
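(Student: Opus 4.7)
The plan is to reduce each of the three limits to a single prototypical argument that exploits the defining property of $\mathcal{F}$: for any $h\in \mathcal{F}$ and $\epsilon>0$, the level set $A_{\epsilon}:=\{x\in\rn:|h(x)|\geq \epsilon\}$ has finite Lebesgue measure. Because $\{u_{n}\}$ is bounded in $E$ (weakly convergent sequences are bounded) and $\{\varphi_{n}\}$ is bounded by assumption, both families are bounded in $L^{q}(\rn)$ for every $q\in [2,2^{*}_{\alpha}]$ by Proposition 2.1, and $u_{n}\to 0$ strongly in $L^{q}_{loc}(\rn)$ for every $q\in [2,2^{*}_{\alpha})$.

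To prove (6.1) I fix $\delta>0$ and pick $\epsilon>0$ so small that $\epsilon\sup_{n}\|u_{n}\|_{2}\|\varphi_{n}\|_{2}<\delta/3$. With $h:=V-V_{p}\in\mathcal{F}$ and $A_{\epsilon}$ as above, I split
\[
\irn (V-V_{p})u_{n}\varphi_{n}\,dx = \Big(\int_{\rn\setminus A_{\epsilon}}+\int_{A_{\epsilon}\cap B_{R}}+\int_{A_{\epsilon}\setminus B_{R}}\Big)(V-V_{p})u_{n}\varphi_{n}\,dx.
\]
The first piece is bounded by $\epsilon\|u_{n}\|_{2}\|\varphi_{n}\|_{2}<\delta/3$. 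For the third piece I choose $R$ so large that $|A_{\epsilon}\setminus B_{R}|$ is small (possible since $|A_{\epsilon}|<\infty$) and apply H\"older with an exponent $q\in (2,2^{*}_{\alpha}]$ to dominate it by $\|V-V_{p}\|_{\infty}\,|A_{\epsilon}\setminus B_{R}|^{1-2/q}\|u_{n}\|_{q}\|\varphi_{n}\|_{q}<\delta/3$. For the middle piece local compactness gives $u_{n}\to 0$ in $L^{2}(B_{R})$, so the integral is at most $\|V-V_{p}\|_{\infty}\|u_{n}\|_{L^{2}(B_{R})}\|\varphi_{n}\|_{L^{2}(B_{R})}<\delta/3$ for $n$ large.

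For (6.2) the assumption $(F_{5})(ii)$ yields the pointwise bound $|f(x,u_{n})-f_{p}(x,u_{n})||\varphi_{n}|\leq |h(x)|(|u_{n}||\varphi_{n}|+|u_{n}|^{p-1}|\varphi_{n}|)$, and the same three-way split goes through: on $\rn\setminus A_{\epsilon}$ the integrand is majorised by $\epsilon(|u_{n}||\varphi_{n}|+|u_{n}|^{p-1}|\varphi_{n}|)$ and controlled by H\"older with exponents $(2,2)$ and $(p,p/(p-1))$; on $A_{\epsilon}\cap B_{R}$ I use strong convergence of $u_{n}$ in $L^{2}(B_{R})\cap L^{p}(B_{R})$ together with $\|h\|_{\infty}$; on $A_{\epsilon}\setminus B_{R}$ I use H\"older with the small measure of $A_{\epsilon}\setminus B_{R}$ against global $L^{q}$ norms with $q$ subcritical. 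For (6.3) I first integrate the pointwise bound in $(F_{5})(ii)$ over $s\in[0,u]$ to obtain $|F(x,u)-F_{p}(x,u)|\leq |h(x)|(u^{2}/2+|u|^{p}/p)$, and the same scheme applies with $\varphi_{n}$ replaced by the constant $1$: only $\|u_{n}\|_{L^{2}}^{2}$ and $\|u_{n}\|_{L^{p}}^{p}$ appear, both uniformly bounded.

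The main obstacle is really bookkeeping rather than substance: one is forced to split into three regions rather than two because membership of $h$ in $\mathcal{F}$ does not imply $h\in L^{q}(\rn)$ for any finite $q$, so the portion of $A_{\epsilon}$ outside a bounded ball cannot be handled by $L^{q}$-integrability of $h$ and must be controlled by the smallness of its Lebesgue measure coupled with a strictly subcritical H\"older exponent. Provided $q$ is chosen in $(2,2^{*}_{\alpha})$ throughout, Proposition 2.1 supplies both the global $L^{q}$ bounds and the local strong convergence needed to close each estimate.
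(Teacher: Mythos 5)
Your argument is correct, and it is the standard one for lemmas of this type. The paper itself omits the proof, deferring to \cite{rZXZ}, but the three-region decomposition you use --- complement of the level set $A_{\epsilon}$ (controlled by $|h|<\epsilon$), $A_{\epsilon}\cap B_{R}$ (controlled by local compactness and $u_{n}\rightharpoonup 0$), and $A_{\epsilon}\setminus B_{R}$ (controlled by $|A_{\epsilon}|<\infty$ together with H\"older against a slack exponent $q$ strictly above the one dictated by the pointwise bound, $q>2$ for the linear term and $q>p$ for the power term, always achievable since $p<2^{*}_{\alpha}$) --- is exactly the expected scheme. Your observation that membership in $\mathcal{F}$ gives only $L^{\infty}$ plus finite level-set measure, not $L^{q}$-integrability of $h$, is the key bookkeeping point and you handle it correctly; the derivation of the primitive bound $|F-F_{p}|\leq |h|\bigl(\tfrac{u^{2}}{2}+\tfrac{|u|^{p}}{p}\bigr)$ from $(F_{5})(ii)$ is also right. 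No gaps.
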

For the proof of this lemma one may refer to \cite{rZXZ}, so we omit it.\\

\begin{proof}[\textbf{Proof of Theorem 1.5}]
As the same in the proof of Theorem 1.1, there exists a bounded PS-sequence $\{\omega_{n}\}\subset m(u_{n})$ satisfies $J(\omega_{n})\rightarrow c$
and $J'(\omega_{n})\rightarrow 0$. Up to a subsequence, $\omega_{n}\rightharpoonup u$ in $E$, $\omega_{n}\rightarrow u$ in $L_{loc}^{2}(\rn)$, and $\omega_{n}\rightarrow u$ a.e. on $x\in\rn$. If $u\neq 0$,  $u$ is a ground state solution of problem (1.1) and the proof is completed. Now we show that $u\neq 0$. Arguing by contradiction, if $\omega_{n}\rightarrow 0$ in $L^{p}(\rn)$, then $\irn F(x, \omega_{n})dx\rightarrow$ and $\irn f(x, \omega_{n})\omega_{n}dx\rightarrow 0$ as $n\rightarrow\infty$, this implies that $\Vert \omega_{n}\Vert\rightarrow 0$ as $n\rightarrow\infty$ and this contradicts with Lemma. If $\omega_{n}\not\rightarrow 0$ in $L^{p}(\rn)$, by Proposition 2.2, for some $R>0$ and $\delta>0$, there exist $y_{n}$ such that
 \begin{align}
\int_{B_{r}(y_{n})}\omega_{n}^{2}dx\geq \delta.
\end{align}
 Without loss of generality, we assume that $y_{n}\in Z^{N}$. Setting $\bar{\omega}_{n}(x)=\omega_{n}(x-y_{n})$, up to a subsequence,
 we have $\bar{\omega}_{n}\rightharpoonup \bar{u}$ in $E$, $\bar{\omega}_{n}\rightarrow \bar{u}$ in $L_{loc}^{2}(\rn)$, and $\bar{\omega}_{n}\rightarrow \bar{u}$ a.e. on $x\in\rn$. By Fatou's lemma and (6.4), $\bar{u}\neq 0$.\\
 For any $\varphi \in E$, set $\varphi_{n}(\cdot)=\varphi(\cdot-y_{n})$, by (6.1) and (6.2) in Lemma 6.2 , we may obtain
 \begin{align*}
\langle J'(\omega_{n}), \varphi_{n}\rangle-\langle J_{p}'(\omega_{n}), \varphi_{n}\rangle\rightarrow 0.
\end{align*}
 Since $J'(\omega_{n})\rightarrow 0$ and $\Vert \varphi_{n}\Vert=\Vert \varphi\Vert$, $\langle J'(\omega_{n}), \varphi_{n}\rangle\rightarrow 0$, so we have
 $\langle J_{p}'(\omega_{n}), \varphi_{n}\rangle\rightarrow 0$.  Because $V_{p}$ and $f_{p}$ are 1-periodic in and $y_{n}\in Z^{N}$, one has
 $\langle J_{p}'(\omega_{n}), \varphi_{n}\rangle=\langle J_{p}'(\bar{\omega_{n}}), \varphi\rangle$. Since $\varphi$ is arbitrary, $J_{p}'(\bar{\omega_{n}})\rightarrow 0$ in $E$ as $n\rightarrow \infty$. Since $J'_{p}$ is weakly sequently continuous by, we have $J'_{p}(\bar{u})=0$.\\

 Now we show that $J_{p}(\bar{u})\leq c$. Replacing $\varphi_{n}$ by $\omega_{n}$ in Lemma 6.2, we have
 \begin{align}
\irn (f(x, \omega_{n})-f_{p}(x, \omega_{n}))\omega_{n}dx\rightarrow 0.
\end{align}
Combine with  (6.5) and (6.3), we have
 \begin{align}
\irn \Big(\frac{1}{2}f_{p}(x, \omega_{n})\omega_{n}-F_{p}(x, \omega_{n})\Big)dx=\irn \Big(\frac{1}{2}f(x, \omega_{n})\omega_{n}-F(x, \omega_{n})\Big)dx+o_{n}(1).
\end{align}
Since $\frac{1}{2}f_{p}(x, \omega_{n})\omega_{n}-F_{p}(x, \omega_{n})$ is 1-periodic in $x_{1},\ldots, x_{N}$, so we have
 \begin{align}
\irn \Big(\frac{1}{2}f_{p}(x, \omega_{n})\omega_{n}-F_{p}(x, \omega_{n})\Big)dx=\irn \Big(\frac{1}{2}f_{p}(x, \bar{\omega}_{n})\bar{\omega}_{n}-F_{p}(x, \bar{\omega}_{n})\Big)dx.
\end{align}
By (6.6) and (6.7), one has
 \begin{align*}
\irn \Big(\frac{1}{2}f_{p}(x, \bar{\omega}_{n})\bar{\omega}_{n}-F_{p}(x, \bar{\omega}_{n})\Big)dx=\irn \Big(\frac{1}{2}f(x, \omega_{n})\omega_{n}-F(x, \omega_{n})\Big)dx+o_{n}(1).
\end{align*}
By and Fatou's lemma, it follows that
 \begin{align*}
\underset{n\rightarrow \infty}{\lim}\irn \Big(\frac{1}{2}f_{p}(x, \bar{\omega}_{n})\bar{\omega}_{n}-F_{p}(x, \bar{\omega}_{n})\Big)dx\geq \irn \Big(\frac{1}{2}f_{p}(x, \bar{u})\bar{u}-F_{p}(x, \bar{u})\Big)dx,
\end{align*}
so
 \begin{align*}
\underset{n\rightarrow \infty}{\lim}\irn \Big(\frac{1}{2}f(x, \omega_{n})\omega_{n}-F(x, \omega_{n})\Big)dx\geq \irn \Big(\frac{1}{2}f_{p}(x, \bar{u})\bar{u}-F_{p}(x, \bar{u})\Big)dx.
\end{align*}
Moreover, we have
 \begin{align*}
c:=\underset{n\rightarrow \infty}{\lim}J(\omega_{n})&=\underset{n\rightarrow \infty}{\lim}\Big(J(\omega_{n})-\frac{1}{2}J'(\omega_{n})\omega_{n}\Big)\\
&=\underset{n\rightarrow \infty}{\lim}\irn \Big(\frac{1}{2}f(x, \omega_{n})\omega_{n}-F(x, \omega_{n})\Big)dx\\
&\geq \Big(\frac{1}{2}f_{p}(x, \bar{u})\bar{u}-F_{p}(x, \bar{u})\Big)dx.\\
&=J_{p}(\bar{u})-\frac{1}{2}J'_{p}(\bar{u})\bar{u}=J_{p}(\bar{u}).
\end{align*}
Since $J'_{p}(\bar{u})=0$ and $\bar{u}\neq 0$, by Lemma 3.1(iii), $\underset{t\geq 0}{\max}J_{p}(t\bar{u})=J_{p}(\bar{u})$ and there exists $t_{\bar{u}}>0$ such that $t_{\bar{u}}\bar{u}\in \mathcal{N}$. Then
\begin{align*}
J(t_{\bar{u}}\bar{u})\leq J_{p}(t_{\bar{u}}\bar{u})\leq \underset{t\geq 0}{\max}J_{p}(t\bar{u})=J_{p}(\bar{u})
\end{align*}
In virtue of $J_{p}(\bar{u})\leq c$, $J(t_{\bar{u}}\bar{u})\leq c$. So $J(t_{\bar{u}}\bar{u})=c$. The proof is completed.
\end{proof}


\begin{thebibliography}{99}
\bibitem{rAP}G. Autuori, P. Pucci, \emph{Elliptic problems involving the fractional Laplacian in $\rn$},
J. Differential Equations, 8 (2013), 2340®C2362.

\bibitem{rCS} L. Caffarelli, L. Silvestre, \emph{An extension problem related to the fractional Laplacian}, Comm. in Partial Differential Equations, 32 (2007), 1245--1260.
\bibitem{rCh} J. Chabrowski, \emph{Variational Methods for Potential Operator Equations}, de Gruyter, Berlin, 1997.

\bibitem{rC} K.C. Chang, \emph{Variational methods for non-differentiable functionals and their applications to partial differential equations}, J. Math. Anal.
Appl. 80 (1981), 102-129.

\bibitem{rCW} X.J. Chang, Z.Q. Wang, \emph{Ground state of scalar field equations involving a fractional Laplacian with general nonlinearity},
Nonlinearity,  26 (2013), 479--494.

\bibitem{rCT} R. Cont, P. Tankov, \emph{Financial Modeling with Jump Processes}, Chapman Hall/CRC Financial Mathematics Series, 2004, Boca Raton.

\bibitem{rDPV} E. Di Nezza, G. Palatucci, E. Valdinaci, \emph{Hitchhiker's guide to the fractional Sobolev spaces},
Bull. Sci. Math., 136 (2012),  521®C573.

\bibitem{rL1} N. Laskin, \emph{Fractional quantum mechanics and L\'{e}vy path integrals}, Phys. Lett. A 268 (2000) 298-305.

\bibitem{rL2} N. Laskin, \emph{Fractional Schr\"{o}dinger equations}, Phys. Rev. 66 (2002) 56-108.

\bibitem{rLi} S.B. Liu, \emph{On superlinear Schr\"{o}dinger equations with periodic potential}, Calc. Var. Partial Differential Equations 45 (2012), 1-9.

\bibitem{rMK} R. Metzler, J.Klafter \emph{The random walls guide to anomalous diffusion: a fractional dynamics approach}, Phys. Rep. 339 (2000) 1-77.

\bibitem{rMR} G. Molica Bisci, V. R\u adulescu, \emph{Ground state solutions of scalar field fractional for Schr\"{o}dinger equations},
Calc. Var. Partial Differential Equations 54 (2015), 2985--3008.

\bibitem{rMRS}G. Molica Bisci, V. R\v{a}dulescu, R. Servadei, \emph{Variational Methods for Nonlocal Fractional Problems},
Cambridge University Press, Cambridge (2016).

\bibitem{rNPV} E. D. Nezza, G. Palatucci, E. Valdinoci, \emph{Hitchhiker's guide to the fractional Sobolev spaces},  Bull. Sci. Math.  136 (2012), 521--573.

\bibitem{rP} A. Pankov, \emph{Periodic nonlinear Schr\"{o}dinger equation with application to photonic crystals}, Milan J. Math. 73 (2005), 259-287.


\bibitem{rPKS} FOD. Pavia, W. Kryszewski, A. Szulkin, \emph{Generalized Nehari manifold and semilinear Schr\"{o}dinger equation with weak monotonicity condition on the nonlinear term}, Proceedings of the AMS, In Press.

\bibitem{rPXZ} P. Pucci, M.Q. Xia, B.L. Zhang, \emph{Multiple solutions for nonhomogeneous Schr\"{o}dinger-Kirchhoff type equations involving the fractional
p-Laplacian in $\mathbb{R}^N$}, Calc. Var. Partial Differ. Equ. 54 (2015), 2785--2806.

\bibitem{rS1}S. Secchi, \emph{Ground state solutions for nonlinear fractional Schr\"{o}dinger equations in $\rn$},
J. Math. Phys., 54 (2013), 031501.

\bibitem{rS2} S. Secchi, \emph{On fractional Schr\"{o}dinger equations in $\rn$ without the Ambrosetti-Rabinowitz condition},
Topol. Methods Nonlinear Anal., 47 (2016), 19-41.

\bibitem{rSi} L. Silvestre, \emph{Regularity of the obstacle problem for a fractional power of the Laplace operator}, Comm. Pure Appl. Math. 60 (2007) 67-112.

\bibitem{rSt} M. Struwe, \emph{Variational Methods}, second ed., Springer-Verlag, Berlin, 1996.

\bibitem{rSW} A. Szulkin, T. Weth, \emph{Ground state solutions for some indefinite problems}, J. Funct. Anal. 257 (2009), 3802--3822.


\bibitem{rW} M. Willem, \emph{Minimax Theorems}, Birkh\"{a}user, Boston, 1996.

\bibitem{rZXZ} H. Zhang, J.X. Xu, F.B. Zhao, \emph{Existence and multiplicity of solutions for superlinear fractional Schr\"{o}dinger equations in $\rn$}, J. Math. Phys. 56 (2015), 091502.

\bibitem{rZZ} X. Zhong, W. Zou, \emph{Ground state and multiple solutions via generalized Nehari mandifold}, Nonlinear Anal. 3102 (2014), 251--263.
 \end{thebibliography}
\end{document}